\documentclass{amsart}    

\usepackage{graphicx}
\usepackage{amsmath}
\usepackage{amsfonts}
\usepackage{amssymb}
\usepackage{enumerate}
\usepackage{hyperref}
\usepackage{amsrefs}

\urlstyle{sf}
\newtheorem{theorem}{Theorem}[section]
\newtheorem{lemma}[theorem]{Lemma}

\theoremstyle{definition}

\numberwithin{equation}{section}

\newcommand{\g}{\mathfrak{g}}
\newcommand{\m}{\mathfrak{m}}
\newcommand{\M}{\mathbb{M}}

\newcommand{\modspace}{\hskip 4pt}

\newcommand{\si}{{[1]}}
\newcommand{\sii}{{[2]}}

\DeclareMathOperator{\Endo}{End}
\DeclareMathOperator{\Id}{Id}
\DeclareMathOperator{\Jac}{J}
\DeclareMathOperator{\modulus}{mod}
\DeclareMathOperator{\op}{{op}}

\DeclareMathOperator{\Nalt}{N_{\rm alt}}

\input{xy}
\xyoption{all}

 \newcount\grcalca
 \newcount\grcalcb
 \newcount\grcalcc
 \newcount\grcalcd
 \newcount\grcalce
 \newcount\grcalcf
 \newcount\grcalcg
 \newcount\grcalch
 \newcount\grcalci
 \newcount\grrow
 \newcount\grcolumn
 \newcount\grwidth
 \newcount\factor
 \newcount\hfactor
 \newcount\qfactor
 \newcount\tfactor
 \newcount\sfactor
 \newcount\dfactor
 \factor  = 8
 \hfactor = \factor
 \divide    \hfactor by 2
 \qfactor = \hfactor
 \divide    \qfactor by 2
 \tfactor = \factor
 \divide    \tfactor by 3
 \sfactor = \factor
 \divide    \sfactor by 6
 \dfactor = \factor
 \divide    \dfactor by 12


 \newcommand{\gbeg}[2]{
   \unitlength=1pt
   \grrow = #2
   \grcolumn = 0
   \grcalca = #1
   \grcalcb = #2
   \multiply \grcalca by \factor
   \grwidth = \grcalca
   \multiply \grcalcb by \factor
   \begin{minipage}{\grcalca pt}
   \begin{picture}(\grcalca,\grcalcb)
   \advance \grcalcb by -\factor
   }

 \newcommand{\gend}{
   \end{picture}
   {\vskip2.5ex}
   \end{minipage} }

 \newcommand{\gnl}{
   \advance \grrow by -1
   \grcolumn = 0}

 \newcommand{\gvac}[1]{
   \advance \grcolumn by #1}

 \newcommand{\gcl}[1]{
   \grcalca = \grcolumn
   \multiply \grcalca by \factor
   \advance \grcalca by \hfactor
   \grcalcb = \grrow
   \multiply \grcalcb by \factor
   \grcalcc = #1
   \multiply \grcalcc by \factor
   \put(\grcalca,\grcalcb) {\line(0,-1){\grcalcc}}
   \advance \grcolumn by 1}

 \newcommand{\gcn}[4]{
   \grcalca = \grcolumn
   \multiply \grcalca by \factor
   \grcalci = #3
   \multiply \grcalci by \hfactor
   \advance \grcalca by \grcalci
   \grcalcb = \grcolumn
   \multiply \grcalcb by \factor
   \grcalci = #3
   \advance \grcalci by #4
   \multiply \grcalci by \qfactor
   \advance \grcalcb by \grcalci
   \grcalcc = \grcolumn
   \multiply \grcalcc by \factor
   \grcalci = #4
   \multiply \grcalci by \hfactor
   \advance \grcalcc by \grcalci
   \grcalcd = \grrow
   \multiply \grcalcd by \factor
   \grcalce = \grrow
   \multiply \grcalce by \factor
   \grcalci = #2
   \multiply \grcalci by \tfactor
   \advance \grcalce by -\grcalci
   \grcalcf = \grrow
   \multiply \grcalcf by \factor
   \grcalci = #2
   \multiply \grcalci by \hfactor
   \advance \grcalcf by -\grcalci
   \grcalcg = \grrow
   \multiply \grcalcg by \factor
   \grcalci = #2
   \multiply \grcalci by \tfactor
   \multiply \grcalci by 2
   \advance \grcalcg by -\grcalci
   \grcalch = \grrow
   \advance \grcalch by -#2
   \multiply \grcalch by \factor
   \qbezier(\grcalca,\grcalcd)(\grcalca,\grcalce)(\grcalcb,\grcalcf)
   \qbezier(\grcalcb,\grcalcf)(\grcalcc,\grcalcg)(\grcalcc,\grcalch)
   \advance \grcolumn by #1}

 \newcommand{\gnot}[1]{
   \grcalca = \grcolumn
   \multiply \grcalca by \factor
   \advance \grcalca by \hfactor
   \grcalcb = \grrow
   \multiply \grcalcb by \factor
   \advance \grcalcb by -\hfactor
   \put(\grcalca,\grcalcb) {\makebox(0,0){$\scriptstyle #1$}} }

 \newcommand{\got}[2]{
   \grcalca = \grcolumn
   \multiply \grcalca by \factor
   \grcalcc = #1
   \multiply \grcalcc by \hfactor
   \advance \grcalca by \grcalcc
   \grcalcb = \grrow
   \multiply \grcalcb by \factor
   \advance \grcalcb by -\tfactor
   \advance \grcalcb by -\tfactor
   \put(\grcalca,\grcalcb){\makebox(0,0)[b]{$#2$}}
   \advance \grcolumn by #1}

 \newcommand{\gob}[2]{
   \grcalca = \grcolumn
   \multiply \grcalca by \factor
   \grcalcc = #1
   \multiply \grcalcc by \hfactor
   \advance \grcalca by \grcalcc
   \put(\grcalca,0){\makebox(0,0)[b]{$#2$}}
   \advance \grcolumn by #1}

 \newcommand{\gmu}{
   \grcalca = \grcolumn
   \advance \grcalca by 1
   \multiply \grcalca by \factor
   \grcalcb = \grrow
   \multiply \grcalcb by \factor
   \grcalcc = \factor
   \advance \grcalcc by \hfactor
   \put(\grcalca,\grcalcb){\oval(\factor,\grcalcc)[b]}
   \advance \grcalcb by -\hfactor
   \advance \grcalcb by -\qfactor
   \put(\grcalca,\grcalcb) {\line(0,-1){\qfactor}}
   \advance \grcolumn by 2}

 \newcommand{\gcmu}{
   \grcalca = \grcolumn
   \advance \grcalca by 1
   \multiply \grcalca by \factor
   \grcalcb = \grrow
   \advance \grcalcb by -1
   \multiply \grcalcb by \factor
   \grcalcc = \factor
   \advance \grcalcc by \hfactor
   \put(\grcalca,\grcalcb){\oval(\factor,\grcalcc)[t]}
   \advance \grcalcb by \factor
   \put(\grcalca,\grcalcb) {\line(0,-1){\qfactor}}
   \advance \grcolumn by 2}

 \newcommand{\glm}{
   \grcalca = \grcolumn
   \multiply \grcalca by \factor
   \advance \grcalca by \hfactor
   \grcalcb = \grcalca
   \advance \grcalcb by \factor
   \grcalcc = \grrow
   \multiply \grcalcc by \factor
   \grcalcd = \grcalcc
   \advance \grcalcd by -\tfactor
   \grcalce = \grcalcd
   \advance \grcalce by -\tfactor
   \put(\grcalca, \grcalcc){\line(0,-1){\tfactor}}
   \put(\grcalca, \grcalcd){\line(1,0){\factor}}
   \put(\grcalca, \grcalcd){\line(3,-1){\factor}}
   \put(\grcalcb, \grcalcc){\line(0,-1){\factor}}
   \advance \grcolumn by 2}

 \newcommand{\grm}{
   \grcalcb = \grcolumn
   \multiply \grcalcb by \factor
   \advance \grcalcb by \hfactor
   \grcalca = \grcalcb
   \advance \grcalca by \factor
   \grcalcc = \grrow
   \multiply \grcalcc by \factor
   \grcalcd = \grcalcc
   \advance \grcalcd by -\tfactor
   \grcalce = \grcalcd
   \advance \grcalce by -\tfactor
   \put(\grcalca, \grcalcc){\line(0,-1){\tfactor}}
   \put(\grcalca, \grcalcd){\line(-1,0){\factor}}
   \put(\grcalca, \grcalcd){\line(-3,-1){\factor}}
   \put(\grcalcb, \grcalcc){\line(0,-1){\factor}}
   \advance \grcolumn by 2}

 \newcommand{\glcm}{
   \grcalca = \grcolumn
   \multiply \grcalca by \factor
   \advance \grcalca by \hfactor
   \grcalcb = \grcalca
   \advance \grcalcb by \factor
   \grcalcc = \grrow
   \advance \grcalcc by -1
   \multiply \grcalcc by \factor
   \grcalcd = \grcalcc
   \advance \grcalcd by \tfactor
   \grcalce = \grcalcd
   \advance \grcalce by \tfactor
   \put(\grcalca, \grcalcc){\line(0,1){\tfactor}}
   \put(\grcalca, \grcalcd){\line(1,0){\factor}}
   \put(\grcalca, \grcalcd){\line(3,1){\factor}}
   \put(\grcalcb, \grcalcc){\line(0,1){\factor}}
   \advance \grcolumn by 2}

 \newcommand{\grcm}{
   \grcalcb = \grcolumn
   \multiply \grcalcb by \factor
   \advance \grcalcb by \hfactor
   \grcalca = \grcalcb
   \advance \grcalca by \factor
   \grcalcc = \grrow
   \advance \grcalcc by -1
   \multiply \grcalcc by \factor
   \grcalcd = \grcalcc
   \advance \grcalcd by \tfactor
   \grcalce = \grcalcd
   \advance \grcalce by \tfactor
   \put(\grcalca, \grcalcc){\line(0,1){\tfactor}}
   \put(\grcalca, \grcalcd){\line(-1,0){\factor}}
   \put(\grcalca, \grcalcd){\line(-3,1){\factor}}
   \put(\grcalcb, \grcalcc){\line(0,1){\factor}}
   \advance \grcolumn by 2}

 \newcommand{\gwmu}[1]{
   \grcalca = \grcolumn
   \multiply \grcalca by \factor
   \grcalcd = \hfactor
   \multiply \grcalcd by #1
   \advance \grcalca by \grcalcd
   \grcalcb = \grrow
   \multiply \grcalcb by \factor
   \grcalcc = \factor
   \advance \grcalcc by \hfactor
   \grcalcd = #1
   \advance \grcalcd by -1
   \multiply \grcalcd by \factor
   \put(\grcalca,\grcalcb){\oval(\grcalcd,\grcalcc)[b]}
   \advance \grcalcb by -\hfactor
   \advance \grcalcb by -\qfactor
   \put(\grcalca,\grcalcb) {\line(0,-1){\qfactor}}
   \advance \grcolumn by #1}

 \newcommand{\gwcm}[1]{
   \grcalca = \grcolumn
   \multiply \grcalca by \factor
   \grcalcd = \hfactor
   \multiply \grcalcd by #1
   \advance \grcalca by \grcalcd
   \grcalcb = \grrow
   \advance \grcalcb by -1
   \multiply \grcalcb by \factor
   \grcalcc = \factor
   \advance \grcalcc by \hfactor
   \grcalcd = #1
   \advance \grcalcd by -1
   \multiply \grcalcd by \factor
   \put(\grcalca,\grcalcb){\oval(\grcalcd,\grcalcc)[t]}
   \advance \grcalcb by \factor
   \put(\grcalca,\grcalcb) {\line(0,-1){\qfactor}}
   \advance \grcolumn by #1}

 \newcommand{\gwmuc}[1]{
   \grcalca = \grcolumn
   \multiply \grcalca by \factor
   \advance \grcalca by \hfactor
   \grcalcb = \grrow
   \multiply \grcalcb by \factor
   \grcalcc = #1
   \advance \grcalcc by -1
   \multiply \grcalcc by \factor
   \put(\grcalca,\grcalcb){\line(1,0){\grcalcc}}
   \advance \grcalca by -\hfactor
   \grcalcd = \hfactor
   \multiply \grcalcd by #1
   \advance \grcalca by \grcalcd
   \grcalcc = \factor
   \advance \grcalcc by \hfactor
   \grcalcd = #1
   \advance \grcalcd by -1
   \multiply \grcalcd by \factor
   \put(\grcalca,\grcalcb){\oval(\grcalcd,\grcalcc)[b]}
   \advance \grcalcb by -\hfactor
   \advance \grcalcb by -\qfactor
   \put(\grcalca,\grcalcb) {\line(0,-1){\qfactor}}
   \advance \grcolumn by #1}

 \newcommand{\gwcmc}[1]{
   \grcalca = \grcolumn
   \multiply \grcalca by \factor
   \advance \grcalca by \hfactor
   \grcalcb = \grrow
   \multiply \grcalcb by \factor
   \advance \grcalcb by -\factor
   \grcalcc = #1
   \advance \grcalcc by -1
   \multiply \grcalcc by \factor
   \put(\grcalca,\grcalcb){\line(1,0){\grcalcc}}
   \grcalcd = #1
   \advance \grcalcd by -1
   \multiply \grcalcd by \hfactor
   \advance \grcalca by \grcalcd
   \grcalcc = \factor
   \advance \grcalcc by \hfactor
   \grcalcd = #1
   \advance \grcalcd by -1
   \multiply \grcalcd by \factor
   \put(\grcalca,\grcalcb){\oval(\grcalcd,\grcalcc)[t]}
   \advance \grcalcb by \factor
   \put(\grcalca,\grcalcb) {\line(0,-1){\qfactor}}
   \advance \grcolumn by #1}

 \newcommand{\gev}{
   \grcalca = \grcolumn
   \advance \grcalca by 1
   \multiply \grcalca by \factor
   \grcalcb = \grrow
   \multiply \grcalcb by \factor
   \grcalcc = \factor
   \advance \grcalcc by \hfactor
   \put(\grcalca,\grcalcb){\oval(\factor,\grcalcc)[b]}
   \advance \grcolumn by 2}

 \newcommand{\gdb}{
   \grcalca = \grcolumn
   \advance \grcalca by 1
   \multiply \grcalca by \factor
   \grcalcb = \grrow
   \advance \grcalcb by -1
   \multiply \grcalcb by \factor
   \grcalcc = \factor
   \advance \grcalcc by \hfactor
   \put(\grcalca,\grcalcb){\oval(\factor,\grcalcc)[t]}
   \advance \grcolumn by 2}

 \newcommand{\gwev}[1]{
   \grcalca = \grcolumn
   \multiply \grcalca by \factor
   \grcalcd = \hfactor
   \multiply \grcalcd by #1
   \advance \grcalca by \grcalcd
   \grcalcb = \grrow
   \multiply \grcalcb by \factor
   \grcalcc = \factor
   \advance \grcalcc by \hfactor
   \grcalcd = #1
   \advance \grcalcd by -1
   \multiply \grcalcd by \factor
   \put(\grcalca,\grcalcb){\oval(\grcalcd,\grcalcc)[b]}
   \advance \grcolumn by #1}

 \newcommand{\gwdb}[1]{
   \grcalca = \grcolumn
   \multiply \grcalca by \factor
   \grcalcd = \hfactor
   \multiply \grcalcd by #1
   \advance \grcalca by \grcalcd
   \grcalcb = \grrow
   \advance \grcalcb by -1
   \multiply \grcalcb by \factor
   \grcalcc = \factor
   \advance \grcalcc by \hfactor
   \grcalcd = #1
   \advance \grcalcd by -1
   \multiply \grcalcd by \factor
   \put(\grcalca,\grcalcb){\oval(\grcalcd,\grcalcc)[t]}
   \advance \grcolumn by #1}

 \newcommand{\gbr}{
   \grcalca = \grcolumn
   \multiply \grcalca by \factor
   \advance \grcalca by \hfactor
   \grcalcb = \grcalca
   \advance \grcalcb by \hfactor
   \grcalcc = \grcalca
   \advance \grcalcc by \factor
   \grcalcd = \grrow
   \multiply \grcalcd by \factor
   \grcalce = \grcalcd
   \advance \grcalce by -\tfactor
   \grcalcf = \grcalcd
   \advance \grcalcf by -\hfactor
   \grcalcg = \grcalce
   \advance \grcalcg by -\tfactor
   \grcalch = \grcalcd
   \advance \grcalch by -\factor
   \qbezier(\grcalca,\grcalcd)(\grcalca,\grcalce)(\grcalcb,\grcalcf)
   \qbezier(\grcalcb,\grcalcf)(\grcalcc,\grcalcg)(\grcalcc,\grcalch)
   \advance \grcalcf by -\dfactor
   \advance \grcalcb by -\sfactor
   \qbezier(\grcalca,\grcalch)(\grcalca,\grcalcg)(\grcalcb,\grcalcf)
   \advance \grcalcf by \sfactor
   \advance \grcalcb by \tfactor
   \qbezier(\grcalcc,\grcalcd)(\grcalcc,\grcalce)(\grcalcb,\grcalcf)
   \advance \grcolumn by 2}

 \newcommand{\gibr}{
   \grcalca = \grcolumn
   \multiply \grcalca by \factor
   \advance \grcalca by \hfactor
   \grcalcb = \grcalca
   \advance \grcalcb by \hfactor
   \grcalcc = \grcalca
   \advance \grcalcc by \factor
   \grcalcd = \grrow
   \multiply \grcalcd by \factor
   \grcalce = \grcalcd
   \advance \grcalce by -\tfactor
   \grcalcf = \grcalcd
   \advance \grcalcf by -\hfactor
   \grcalcg = \grcalce
   \advance \grcalcg by -\tfactor
   \grcalch = \grcalcd
   \advance \grcalch by -\factor
   \qbezier(\grcalcc,\grcalcd)(\grcalcc,\grcalce)(\grcalcb,\grcalcf)
   \qbezier(\grcalcb,\grcalcf)(\grcalca,\grcalcg)(\grcalca,\grcalch)
   \advance \grcalcf by -\dfactor
   \advance \grcalcb by \sfactor
   \qbezier(\grcalcc,\grcalch)(\grcalcc,\grcalcg)(\grcalcb,\grcalcf)
   \advance \grcalcf by \sfactor
   \advance \grcalcb by -\tfactor
   \qbezier(\grcalca,\grcalcd)(\grcalca,\grcalce)(\grcalcb,\grcalcf)
   \advance \grcolumn by 2}

 \newcommand{\gbrc}{
   \grcalca = \grcolumn
   \multiply \grcalca by \factor
   \advance \grcalca by \hfactor
   \grcalcb = \grcalca
   \advance \grcalcb by \hfactor
   \grcalcc = \grcalca
   \advance \grcalcc by \factor
   \grcalcd = \grrow
   \multiply \grcalcd by \factor
   \grcalce = \grcalcd
   \advance \grcalce by -\tfactor
   \grcalcf = \grcalcd
   \advance \grcalcf by -\hfactor
   \grcalcg = \grcalce
   \advance \grcalcg by -\tfactor
   \grcalch = \grcalcd
   \advance \grcalch by -\factor
   \put(\grcalcb,\grcalcf){\circle{\hfactor}}
   \qbezier(\grcalca,\grcalcd)(\grcalca,\grcalce)(\grcalcb,\grcalcf)
   \qbezier(\grcalcb,\grcalcf)(\grcalcc,\grcalcg)(\grcalcc,\grcalch)
   \advance \grcalcf by -\dfactor
   \advance \grcalcb by -\sfactor
   \qbezier(\grcalca,\grcalch)(\grcalca,\grcalcg)(\grcalcb,\grcalcf)
   \advance \grcalcf by \sfactor
   \advance \grcalcb by \tfactor
   \qbezier(\grcalcc,\grcalcd)(\grcalcc,\grcalce)(\grcalcb,\grcalcf)
   \advance \grcolumn by 2}

 \newcommand{\gibrc}{
   \grcalca = \grcolumn
   \multiply \grcalca by \factor
   \advance \grcalca by \hfactor
   \grcalcb = \grcalca
   \advance \grcalcb by \hfactor
   \grcalcc = \grcalca
   \advance \grcalcc by \factor
   \grcalcd = \grrow
   \multiply \grcalcd by \factor
   \grcalce = \grcalcd
   \advance \grcalce by -\tfactor
   \grcalcf = \grcalcd
   \advance \grcalcf by -\hfactor
   \grcalcg = \grcalce
   \advance \grcalcg by -\tfactor
   \grcalch = \grcalcd
   \advance \grcalch by -\factor
   \put(\grcalcb,\grcalcf){\circle{\hfactor}}
   \qbezier(\grcalcc,\grcalcd)(\grcalcc,\grcalce)(\grcalcb,\grcalcf)
   \qbezier(\grcalcb,\grcalcf)(\grcalca,\grcalcg)(\grcalca,\grcalch)
   \advance \grcalcf by -\dfactor
   \advance \grcalcb by \sfactor
   \qbezier(\grcalcc,\grcalch)(\grcalcc,\grcalcg)(\grcalcb,\grcalcf)
   \advance \grcalcf by \sfactor
   \advance \grcalcb by -\tfactor
   \qbezier(\grcalca,\grcalcd)(\grcalca,\grcalce)(\grcalcb,\grcalcf)
   \advance \grcolumn by 2}

 \newcommand{\gu}[1]{
   \grcalca = \grcolumn
   \multiply \grcalca by \factor
   \grcalcd = \hfactor
   \multiply \grcalcd by #1
   \advance \grcalca by \grcalcd
   \grcalcb = \grrow
   \advance \grcalcb by -1
   \multiply \grcalcb by \factor
   \put(\grcalca,\grcalcb) {\line(0,1){\hfactor}}
   \advance \grcalcb by \hfactor
   \put(\grcalca,\grcalcb) {\circle*{3}}
   \advance \grcolumn by #1}

 \newcommand{\gcu}[1]{
   \grcalca = \grcolumn
   \multiply \grcalca by \factor
   \grcalcd = \hfactor
   \multiply \grcalcd by #1
   \advance \grcalca by \grcalcd
   \grcalcb = \grrow
   \multiply \grcalcb by \factor
   \put(\grcalca,\grcalcb) {\line(0,-1){\hfactor}}
   \advance \grcalcb by -\hfactor
   \put(\grcalca,\grcalcb) {\circle*{3}}
   \advance \grcolumn by #1}

 \newcommand{\gmp}[1]{
   \grcalca = \grcolumn
   \multiply \grcalca by \factor
   \advance \grcalca by \hfactor
   \grcalcb = \grrow
   \multiply \grcalcb by \factor
   \put(\grcalca,\grcalcb) {\line(0,-1){\dfactor}}
   \advance \grcalcb by -\factor
   \put(\grcalca,\grcalcb) {\line(0,1){\dfactor}}
   \advance \grcalcb by \hfactor
   \grcalcc = \factor
   \advance \grcalcc by -\qfactor
   \put(\grcalca,\grcalcb) {\circle{\grcalcc}}
   \put(\grcalca,\grcalcb) {\makebox(0,0){$\scriptstyle #1$}}
   \advance \grcolumn by 1}

 \newcommand{\gbmp}[1]{
   \grcalca = \grcolumn
   \multiply \grcalca by \factor
   \advance \grcalca by \hfactor
   \grcalcb = \grrow
   \multiply \grcalcb by \factor
   \put(\grcalca,\grcalcb) {\line(0,-1){\dfactor}}
   \advance \grcalcb by -\factor
   \put(\grcalca,\grcalcb) {\line(0,1){\dfactor}}
   \advance \grcalca by -\hfactor
   \advance \grcalca by \dfactor
   \advance \grcalcb by \dfactor
   \grcalcc = \factor
   \advance \grcalcc by -\sfactor
   \put(\grcalca,\grcalcb) {\framebox(\grcalcc,\grcalcc){$\scriptstyle #1$}}
   \advance \grcolumn by 1}

 \newcommand{\gbmpt}[1]{
   \grcalca = \grcolumn
   \multiply \grcalca by \factor
   \advance \grcalca by \hfactor
   \grcalcb = \grrow
   \multiply \grcalcb by \factor
   \put(\grcalca,\grcalcb) {\line(0,-1){\dfactor}}
   \advance \grcalcb by -\factor
   \advance \grcalca by -\hfactor
   \advance \grcalca by \dfactor
   \advance \grcalcb by \dfactor
   \grcalcc = \factor
   \advance \grcalcc by -\sfactor
   \put(\grcalca,\grcalcb) {\framebox(\grcalcc,\grcalcc){$\scriptstyle #1$}}
   \advance \grcolumn by 1}

 \newcommand{\gbmpb}[1]{
   \grcalca = \grcolumn
   \multiply \grcalca by \factor
   \advance \grcalca by \hfactor
   \grcalcb = \grrow
   \multiply \grcalcb by \factor
   \advance \grcalcb by -\factor
   \put(\grcalca,\grcalcb) {\line(0,1){\dfactor}}
   \advance \grcalca by -\hfactor
   \advance \grcalca by \dfactor
   \advance \grcalcb by \dfactor
   \grcalcc = \factor
   \advance \grcalcc by -\sfactor
   \put(\grcalca,\grcalcb) {\framebox(\grcalcc,\grcalcc){$\scriptstyle #1$}}
   \advance \grcolumn by 1}

 \newcommand{\gbmpn}[1]{
   \grcalca = \grcolumn
   \multiply \grcalca by \factor
   \advance \grcalca by \hfactor
   \grcalcb = \grrow
   \multiply \grcalcb by \factor
   \advance \grcalcb by -\factor
   \advance \grcalca by -\hfactor
   \advance \grcalca by \dfactor
   \advance \grcalcb by \dfactor
   \grcalcc = \factor
   \advance \grcalcc by -\sfactor
   \put(\grcalca,\grcalcb) {\framebox(\grcalcc,\grcalcc){$\scriptstyle #1$}}
   \advance \grcolumn by 1}

 \newcommand{\glmptb}{
   \grcalca = \grcolumn
   \multiply \grcalca by \factor
   \advance \grcalca by \hfactor
   \grcalcb = \grrow
   \multiply \grcalcb by \factor
   \put(\grcalca,\grcalcb) {\line(0,-1){\dfactor}}
   \advance \grcalcb by -\factor
   \put(\grcalca,\grcalcb) {\line(0,1){\dfactor}}
   \advance \grcalca by -\hfactor
   \advance \grcalca by \dfactor
   \advance \grcalcb by \dfactor
   \put(\grcalca,\grcalcb) {\line(1,0){\factor}}
   \advance \grcalcb by \factor
   \advance \grcalcb by -\sfactor
   \put(\grcalca,\grcalcb) {\line(1,0){\factor}}
   \grcalcc = \factor
   \advance \grcalcc by -\sfactor
   \put(\grcalca,\grcalcb) {\line(0,-1){\grcalcc}}
   \advance \grcolumn by 1}

 \newcommand{\glmpt}{
   \grcalca = \grcolumn
   \multiply \grcalca by \factor
   \advance \grcalca by \hfactor
   \grcalcb = \grrow
   \multiply \grcalcb by \factor
   \put(\grcalca,\grcalcb) {\line(0,-1){\dfactor}}
   \advance \grcalca by -\hfactor
   \advance \grcalca by \dfactor
   \advance \grcalcb by -\dfactor
   \put(\grcalca,\grcalcb) {\line(1,0){\factor}}
   \advance \grcalcb by -\factor
   \advance \grcalcb by \sfactor
   \put(\grcalca,\grcalcb) {\line(1,0){\factor}}
   \grcalcc = \factor
   \advance \grcalcc by -\sfactor
   \put(\grcalca,\grcalcb) {\line(0,1){\grcalcc}}
   \advance \grcolumn by 1}

 \newcommand{\glmpb}{
   \grcalca = \grcolumn
   \multiply \grcalca by \factor
   \advance \grcalca by \hfactor
   \grcalcb = \grrow
   \multiply \grcalcb by \factor
   \advance \grcalcb by -\factor
   \put(\grcalca,\grcalcb) {\line(0,1){\dfactor}}
   \advance \grcalca by -\hfactor
   \advance \grcalca by \dfactor
   \advance \grcalcb by \dfactor
   \put(\grcalca,\grcalcb) {\line(1,0){\factor}}
   \advance \grcalcb by \factor
   \advance \grcalcb by -\sfactor
   \put(\grcalca,\grcalcb) {\line(1,0){\factor}}
   \grcalcc = \factor
   \advance \grcalcc by -\sfactor
   \put(\grcalca,\grcalcb) {\line(0,-1){\grcalcc}}
   \advance \grcolumn by 1}

 \newcommand{\glmp}{
   \grcalca = \grcolumn
   \multiply \grcalca by \factor
   \advance \grcalca by \dfactor
   \grcalcb = \grrow
   \multiply \grcalcb by \factor
   \advance \grcalcb by -\dfactor
   \put(\grcalca,\grcalcb) {\line(1,0){\factor}}
   \advance \grcalcb by -\factor
   \advance \grcalcb by \sfactor
   \put(\grcalca,\grcalcb) {\line(1,0){\factor}}
   \grcalcc = \factor
   \advance \grcalcc by -\sfactor
   \put(\grcalca,\grcalcb) {\line(0,1){\grcalcc}}
   \advance \grcolumn by 1}

 \newcommand{\gcmptb}{
   \grcalca = \grcolumn
   \multiply \grcalca by \factor
   \advance \grcalca by \hfactor
   \grcalcb = \grrow
   \multiply \grcalcb by \factor
   \put(\grcalca,\grcalcb) {\line(0,-1){\dfactor}}
   \advance \grcalcb by -\factor
   \put(\grcalca,\grcalcb) {\line(0,1){\dfactor}}
   \advance \grcalca by -\hfactor
   \advance \grcalcb by \dfactor
   \put(\grcalca,\grcalcb) {\line(1,0){\factor}}
   \advance \grcalcb by \factor
   \advance \grcalcb by -\sfactor
   \put(\grcalca,\grcalcb) {\line(1,0){\factor}}
   \advance \grcolumn by 1}

 \newcommand{\gcmpt}{
   \grcalca = \grcolumn
   \multiply \grcalca by \factor
   \advance \grcalca by \hfactor
   \grcalcb = \grrow
   \multiply \grcalcb by \factor
   \put(\grcalca,\grcalcb) {\line(0,-1){\dfactor}}
   \advance \grcalcb by -\factor
   \advance \grcalca by -\hfactor
   \advance \grcalcb by \dfactor
   \put(\grcalca,\grcalcb) {\line(1,0){\factor}}
   \advance \grcalcb by \factor
   \advance \grcalcb by -\sfactor
   \put(\grcalca,\grcalcb) {\line(1,0){\factor}}
   \advance \grcolumn by 1}

 \newcommand{\gcmpb}{
   \grcalca = \grcolumn
   \multiply \grcalca by \factor
   \advance \grcalca by \hfactor
   \grcalcb = \grrow
   \multiply \grcalcb by \factor
   \advance \grcalcb by -\factor
   \put(\grcalca,\grcalcb) {\line(0,1){\dfactor}}
   \advance \grcalca by -\hfactor
   \advance \grcalcb by \dfactor
   \put(\grcalca,\grcalcb) {\line(1,0){\factor}}
   \advance \grcalcb by \factor
   \advance \grcalcb by -\sfactor
   \put(\grcalca,\grcalcb) {\line(1,0){\factor}}
   \advance \grcolumn by 1}

 \newcommand{\gcmp}{
   \grcalca = \grcolumn
   \multiply \grcalca by \factor
   \grcalcb = \grrow
   \multiply \grcalcb by \factor
   \advance \grcalcb by -\factor
   \advance \grcalcb by \dfactor
   \put(\grcalca,\grcalcb) {\line(1,0){\factor}}
   \advance \grcalcb by \factor
   \advance \grcalcb by -\sfactor
   \put(\grcalca,\grcalcb) {\line(1,0){\factor}}
   \advance \grcolumn by 1}

 \newcommand{\grmptb}{
   \grcalca = \grcolumn
   \multiply \grcalca by \factor
   \advance \grcalca by \hfactor
   \grcalcb = \grrow
   \multiply \grcalcb by \factor
   \put(\grcalca,\grcalcb) {\line(0,-1){\dfactor}}
   \advance \grcalcb by -\factor
   \put(\grcalca,\grcalcb) {\line(0,1){\dfactor}}
   \advance \grcalca by \hfactor
   \advance \grcalca by -\dfactor
   \advance \grcalcb by \dfactor
   \put(\grcalca,\grcalcb) {\line(-1,0){\factor}}
   \advance \grcalcb by \factor
   \advance \grcalcb by -\sfactor
   \put(\grcalca,\grcalcb) {\line(-1,0){\factor}}
   \grcalcc = \factor
   \advance \grcalcc by -\sfactor
   \put(\grcalca,\grcalcb) {\line(0,-1){\grcalcc}}
   \advance \grcolumn by 1}

 \newcommand{\grmpt}{
   \grcalca = \grcolumn
   \multiply \grcalca by \factor
   \advance \grcalca by \hfactor
   \grcalcb = \grrow
   \multiply \grcalcb by \factor
   \put(\grcalca,\grcalcb) {\line(0,-1){\dfactor}}
   \advance \grcalca by \hfactor
   \advance \grcalca by -\dfactor
   \advance \grcalcb by -\dfactor
   \put(\grcalca,\grcalcb) {\line(-1,0){\factor}}
   \advance \grcalcb by -\factor
   \advance \grcalcb by \sfactor
   \put(\grcalca,\grcalcb) {\line(-1,0){\factor}}
   \grcalcc = \factor
   \advance \grcalcc by -\sfactor
   \put(\grcalca,\grcalcb) {\line(0,1){\grcalcc}}
   \advance \grcolumn by 1}

 \newcommand{\grmpb}{
   \grcalca = \grcolumn
   \multiply \grcalca by \factor
   \advance \grcalca by \hfactor
   \grcalcb = \grrow
   \multiply \grcalcb by \factor
   \advance \grcalcb by -\factor
   \put(\grcalca,\grcalcb) {\line(0,1){\dfactor}}
   \advance \grcalca by \hfactor
   \advance \grcalca by -\dfactor
   \advance \grcalcb by \dfactor
   \put(\grcalca,\grcalcb) {\line(-1,0){\factor}}
   \advance \grcalcb by \factor
   \advance \grcalcb by -\sfactor
   \put(\grcalca,\grcalcb) {\line(-1,0){\factor}}
   \grcalcc = \factor
   \advance \grcalcc by -\sfactor
   \put(\grcalca,\grcalcb) {\line(0,-1){\grcalcc}}
   \advance \grcolumn by 1}

 \newcommand{\grmp}{
   \grcalca = \grcolumn
   \multiply \grcalca by \factor
   \advance \grcalca by \factor
   \advance \grcalca by -\dfactor
   \grcalcb = \grrow
   \multiply \grcalcb by \factor
   \advance \grcalcb by -\dfactor
   \put(\grcalca,\grcalcb) {\line(-1,0){\factor}}
   \advance \grcalcb by -\factor
   \advance \grcalcb by \sfactor
   \put(\grcalca,\grcalcb) {\line(-1,0){\factor}}
   \grcalcc = \factor
   \advance \grcalcc by -\sfactor
   \put(\grcalca,\grcalcb) {\line(0,1){\grcalcc}}
   \advance \grcolumn by 1}
\newcommand{\gsy}{
   \grcalca = \grcolumn
   \multiply \grcalca by \factor
   \advance \grcalca by \hfactor
   \grcalcb = \grcalca
   \advance \grcalcb by \hfactor
   \grcalcc = \grcalca
   \advance \grcalcc by \factor
   \grcalcd = \grrow
   \multiply \grcalcd by \factor
   \grcalce = \grcalcd
   \advance \grcalce by -\tfactor
   \grcalcf = \grcalcd
   \advance \grcalcf by -\hfactor
   \grcalcg = \grcalce
   \advance \grcalcg by -\tfactor
   \grcalch = \grcalcd
   \advance \grcalch by -\factor
   \qbezier(\grcalcc,\grcalcd)(\grcalcc,\grcalce)(\grcalcb,\grcalcf)
   \qbezier(\grcalcb,\grcalcf)(\grcalca,\grcalcg)(\grcalca,\grcalch)
   \advance \grcalcf by -\dfactor
   \advance \grcalcb by \sfactor
   \qbezier(\grcalcc,\grcalch)(\grcalcc,\grcalcg)(\grcalcb,\grcalcf)
   \qbezier(\grcalca,\grcalcd)(\grcalca,\grcalce)(\grcalcb,\grcalcf)
   \advance \grcolumn by 2}

 \newcommand{\gwmuh}[3]{
   \grcalca = \grcolumn
   \multiply \grcalca by \factor
   \grcalcb = #2
   \advance \grcalcb by #3
   \multiply \grcalcb by \qfactor
   \advance \grcalca by \grcalcb
   \grcalcb = \grrow
   \multiply \grcalcb by \factor
   \grcalcc = #3
   \advance \grcalcc by -#2
   \multiply \grcalcc by \hfactor
   \grcalcd = \factor
   \advance \grcalcd by \hfactor
   \put(\grcalca,\grcalcb){\oval(\grcalcc,\grcalcd)[b]}
   \grcalca = \grcolumn
   \multiply \grcalca by \factor
   \grcalcc = #1
   \multiply \grcalcc by \hfactor
   \advance \grcalca by \grcalcc
   \advance \grcalcb by -\hfactor
   \advance \grcalcb by -\qfactor
   \put(\grcalca,\grcalcb) {\line(0,-1){\qfactor}}
   \advance \grcolumn by #1}

 \newcommand{\gwcmh}[3]{
   \grcalca = \grcolumn
   \multiply \grcalca by \factor
   \grcalcb = #2
   \advance \grcalcb by #3
   \multiply \grcalcb by \qfactor
   \advance \grcalca by \grcalcb
   \grcalcb = \grrow
   \advance \grcalcb by -1
   \multiply \grcalcb by \factor
   \grcalcc = #3
   \advance \grcalcc by -#2
   \multiply \grcalcc by \hfactor
   \grcalcd = \factor
   \advance \grcalcd by \hfactor
   \put(\grcalca,\grcalcb){\oval(\grcalcc,\grcalcd)[t]}
   \grcalca = \grcolumn
   \multiply \grcalca by \factor
   \grcalcc = #1
   \multiply \grcalcc by \hfactor
   \advance \grcalca by \grcalcc
   \advance \grcalcb by \factor
   \put(\grcalca,\grcalcb) {\line(0,-1){\qfactor}}
   \advance \grcolumn by #1}

 \newcommand{\gsbox}[1]{
   \grcalca = \grcolumn
   \multiply \grcalca by \factor
   \grcalcb = \grrow
   \multiply \grcalcb by \factor
   \advance \grcalcb by -\factor
   \grcalcc = #1
   \multiply \grcalcc by \factor
   \grcalcd = \factor
   \put(\grcalca,\grcalcb){\framebox(\grcalcc,\grcalcd){}}}

   \newcommand{\gbox}[2]{
   \grcalca = \grcolumn
   \multiply \grcalca by \factor
   \grcalcb = \grrow
   \multiply \grcalcb by \factor
   \advance \grcalcb by -\factor
   \grcalcc = #1
   \multiply \grcalcc by \factor
   \grcalcd = #2
   \multiply \grcalcd by \factor
   \put(\grcalca,\grcalcb){\framebox(\grcalcc,\grcalcd){}}}

\allowdisplaybreaks[4]

\newcommand{\linea}{\gcl{1}}
\newcommand{\cruce}{\gbr}
\newcommand{\etiqueta}[1]{\gnot{#1\,\,\,\,}}

\bibliographystyle{spmpsci}

\begin{document}

\title[Co-Moufang deformations of $U(\M(\alpha,\beta,\gamma))$]{Co-Moufang deformations of the universal enveloping algebra of the algebra of traceless octonions}

\author{Jos\'e M. P\'erez-Izquierdo}
\address{Departamento de Matem\'aticas y Computaci\'on\\ Universidad de La Rioja, 26004 \\ Lo\-gro\-\~no, Spain}
\email{jm.perez@unirioja.es}  
              
\author{Ivan P. Shestakov}        
\address{Instituto de Matematica e Estat{\'\i}stica, Universidade de S\~ao Paulo, Caixa postal 66281, CEP 05315-970\\ S\~ao Paulo, Brazil}
\email{ivan.shestakov@gmail.com}

\thanks{The first author thanks support from the Spanish Ministry of Science and Innovation
(MTM2010-18370-C04-03) and from FAPESP (2011/51553-4). He also thanks the Instituto de Matem\'atica e Estat{\'\i}stica/USP of S\~{a}o Paulo for its kind hospitality during his stay in 2011. The second author thanks support from CNPq, Proc. 456698/2014-0 and FAPESP, Proc. 2014/09310-5. Both authors also thank Bodo Pareigis for sharing his beautiful ``diagrams'' package.}

\keywords{Malcev algebras \and Quantized enveloping algebras \and Deformations}
\subjclass[2010]{17D10, 17B37}


\begin{abstract}
By means of graphical calculus we prove that, over fields of characteristic zero, any bialgebra deformation of the universal enveloping algebra of the algebra of traceless octonions satisfying the dual of the left and right Moufang identities must be coassociative and cocommutative.
\end{abstract}
\maketitle

\section{Introduction}
\begin{center}
 \emph{Throughout this paper the base field $k$ will be assumed to be of characteristic zero}
\end{center}

The only spheres that are H-spaces are $S^0, S^1, S^3$ and $S^7$. While $S^0, S^1$ and $S^3$  are Lie groups, the product on $S^7$, inherited from the octonions, is no longer associative but satisfies the \emph{left, middle and right Moufang identities}
\begin{displaymath}
  a(x(ay)) = ((ax)a)y,\quad (ax)(ya) = (a(xy))a\quad \mbox{and}\quad  ((xa)y)a = x(a(ya)).
\end{displaymath}
A Moufang loop is a set $(M,xy)$ with a binary product $xy$ such that: 1) it satisfies any of the Moufang identities 2) the left and right multiplication operators by any $x \in M$, $L_x \colon y \mapsto xy$ and $R_x\colon y \mapsto yx$, are bijective and 3) $M$ contains a unit element for the product $xy$.

The tangent space $\m$ at the unit element of any analytic Moufang loop $M$ can be endowed with a skew-symmetric product $[a,b]$ that satisfies a generalization of the Jacobi identity:
\begin{displaymath}
  \Jac(a,b,[a,c]) = [\Jac(a,b,c),a]
\end{displaymath}
where $\Jac(a,b,c) := [[a,b],c] + [[b,c],a] +[[c,a],b]$ is the \emph{Jacobian} of $a,b$ and $c$. Such algebraic structures $(\m,[a,b])$ are called \emph{Malcev algebras} \cite{M55,S61} and they locally classify analytic Moufang loops up to isomorphism. The proof of this connection between Moufang loops and Malcev algebras initiated the development of a non-associative Lie correspondence finally achieved by Mikheev and Sabinin in 1987 \cite{MS87} with techniques of differential geometry. The tangent space of $S^7$ at the unit element is the Malcev algebra of traceless octonions. 

Given any not necessarily associative algebra $(A,xy)$, the \emph{generalized alternative nucleus} of $A$ is
\begin{displaymath}
  \Nalt(A) := \{ a \in A \mid (a,x,y) = - (x,a,y) = (x,y,a) \quad \forall_{x,y \in A}\}.
\end{displaymath}
where $(x,y,z) := (xy)z - x(yz)$ stands for the \emph{associator} of $x,y$ and $z$. The generalized alternative nucleus  is always closed under the commutator product $[x,y] := xy - yx$ and it becomes a Malcev algebra with this product. \emph{Alternative algebras} are those algebras $A$ such that, like the octonions, satisfy $\Nalt(A) = A$, so they provide natural examples of Malcev algebras. Unfortunately, it remains an open problem whether or not any Malcev algebra is a subalgebra of $(A,[x,y])$ for some alternative algebra $(A,xy)$. In \cite{PS04} it was proved that any Malcev algebra appears as a Malcev subalgebra of the generalized alternative nucleus of some algebra. This result is similar to the corresponding result for Lie algebras, namely, the Poincar\'e-Birkhoff-Witt Theorem: any Lie algebra appears as a Lie subalgebra of an associative algebra with the commutator product. As in the case of Lie algebra, the result is based on the explicit construction of a universal enveloping algebra $U(\m)$ for the Malcev algebra $(\m,[a,b])$. In the case that $(\m,[a,b])$ is a Lie algebra then $U(\m)$ turns out to be the usual associative universal enveloping algebra of $\m$.

The universal enveloping algebras of Malcev algebras are quite close to Hopf algebras. $U(\m)$ admits a cocommutative and coassociative coalgebra structure $(U(\m),\Delta,\epsilon)$ with the additional property that the comultiplication, or coproduct, $\Delta$ and the counit $\epsilon$ are homomorphisms of algebras. The Malcev algebra $\m$ is recovered as the space of primitive elements, i.e., elements $a$ such that $\Delta(a) = a \otimes 1 + 1 \otimes a$. However, in general,  $U(\m)$ is non-associative but it satisfies some Hopf version of the Moufang identities, the \emph{left, middle and right Hopf-Moufang identities}:
\begin{eqnarray*}
  \sum a_{(1)}(x(a_{(2)}y)) &=& \sum ((a_{(1)}x)a_{(2)})y, \\
  \sum (a_{(1)}x)(ya_{(2)}) &=& \sum (a_{(1)}(xy))a_{(2)} \text{ and }\\
  \sum ((xa_{(1)})y)a_{(2)} &=& \sum x(a_{(1)}(ya_{(2)})).
\end{eqnarray*}

The study of $U(\m)$ led in \cite{PS04} to a generalization for Malcev algebras of the well-known Ado-Iwasawa Theorem and the Chevalley and Konstant Theorems \cite{SZ,PS10}. Explicit formulas for the product of $U(\m)$ in some low-dimensional cases are also known \cite{BHP09,BHPTU,BU10,BT11}. Another approach to the construction of the product of $U(\m)$ was carried out in \cite{BMP}. This approach unified the connections between groups with triality and Moufang loops \cite{Gl68,Do78,Mi93,GrZa06}, and Lie algebras with triality and Malcev algebras \cite{Mi92,Gr03} by means of the notion of Hopf algebra with triality. For an account of universal enveloping algebras of generalizations of Lie and Malcev algebras see \cite{P07,MP10,MPS14}.

In \cite{MP} a study of the possible coassociative bialgebra deformations of $U(\m)$ was carried out for the Malcev algebra of traceless octonions $\m = \M(\alpha,\beta,\gamma)$ \cite{ZSSS82}. The reason for focusing on these algebras is that any central simple Malcev algebra is either a Lie algebra or isomorphic to an algebra $\M(\alpha,\beta,\gamma)$. Bialgebra deformations of $U(\m)$ should be an analogue of quantized enveloping algebras of Lie algebras in a non-associative setting. The development of quantized enveloping algebras and quantum groups during the last two decades has been spectacular so the search for bialgebra deformations of enveloping algebras of Malcev is challenging.

Given a Malcev algebra $(\m,[a,b])$ over a field $k$, a \emph{bialgebra deformation} of $U(\m)$ over the ring of formal power series $K = k[[h]]$ is a topologically free $K$-module $U_h(\m)$ endowed with four maps of $K$-modules
\begin{center}
\begin{tabular}{llll}
 \emph{(unit)} & $\iota_h \colon K \rightarrow U_h(\m)$,  $1 \mapsto 1$,& \emph{(product)} & $p_h \colon U_h(\m) \tilde{\otimes} U_h(\m) \rightarrow U_h(\m)$,\\
\emph{(counit)} & $\epsilon_h \colon U_h(\m) \rightarrow K$, &
 \emph{(coproduct)}& $\Delta_h \colon U_h(\m) \rightarrow U_h(\m) \tilde{\otimes} U_h(\m)$,
\end{tabular}
\end{center}
where $\tilde{\otimes}$ stands for the completed tensor product in the $h$-adic topology, such that
\begin{enumerate}
 \item  $(U_h(\m),\iota_h,p_h,\epsilon_h,\Delta_h)$ satisfies the axioms of bialgebra over the commutative ring $K$ (see {Remarks to Definition 4.1.3} in \cite{CP95}) but with the algebraic tensor product replaced by its completion and without assuming the coassociativity,
 \item $U_h(\m)/hU_h(\m) \cong U(\m)$ as a $k$-vector space and, with this identification,
 \item $p_h \equiv p \modspace (\modulus h)$ and $\Delta_h \equiv \Delta \modspace (\modulus h)$
\end{enumerate}
with $p$ and $\Delta$ the multiplication and comultiplication of $U(\m)$ respectively. Since $U_h(\m)$ is topologically free and $U_h(\m)/hU_h(\m) \cong U(\m)$, we can identify $U_h(\m)$ with $U(\m)[[h]]$ as a $K$-module and we will do so. The unit and counit are assumed to be the natural extensions to $U(\m)[[h]]$ of the unit and counit of $U(\m)$. The \emph{null deformation} of $U(\m)$ is obtained by extending $K$-linearly the structure maps of $U(\m)$. \emph{Trivial deformations} are those that are isomorphic to the null deformation under a $K$-linear bialgebra isomorphism which is the identity modulo $h$. To avoid confusion in this context where two multiplications, $p$ and $p_h$, and two comultiplications, $\Delta$ and $\Delta_h$, appear we will stick to the following notation:
\begin{eqnarray*}
xy := p(x \otimes y), && \sum x_{(1)} \otimes x_{(2)} := \Delta(x) \text{ for all } x \in U(\m),\\
x\bullet y := p_h(x\tilde{\otimes} y) &\text{\ and\ }&  \sum x_{[1]} \tilde{\otimes} x_{[2]} := \Delta_h(x) \text{ for all } x \in U_h(\m).
\end{eqnarray*}

Bialgebra deformations of $U(\m)$ are rather general objects so some extra restrictions are imposed either on the multiplication or on the comultiplication. Quantized universal enveloping algebras of Lie algebras are assumed to be associative and coassociative \cite{CP95}, i.e., the associativity and coassociativity are properties that one wants to preserve when deforming these structures. However, $U(\m)$ is no longer associative so, it is more appropriate to preserve the Hopf-Moufang identities.

In \cite{MP} it was proved that, in contrast to the Lie case, any coassociative but possibly non-cocommutative bialgebra deformation of $U(\M(\alpha,\beta,\gamma))$ satisfying the Hopf-Moufang identities is trivial. The coassociativity allowed an approach to the study of bialgebra deformations similar to that carried out for Lie algebras. However, since the product of $U(\m)$ is not associative this restriction on the coalgebra structure seems artificial. According to the self-dual structure of Hopf algebras, the dual version of the Hopf-Moufang identities, that we will call \emph{co-Moufang identities}, is probably the natural restriction on the coalgebra structure of any bialgebra deformation of $U(\m)$. Co-Moufang identities are satisfied, for instance, by the coordinate algebra $k[S^7]$ of the seven-dimensional sphere where they have been used to develop differential geometry \cite{KM09}.

Unfortunately, when dealing with non-coassociative coalgebras Sweedler's notation for the comultiplication turns out to be very obscure and cumbersome. Graphical calculus is preferable in this case:
\begin{displaymath}
   \gbeg{2}{3}
    \got{1}{x} \got{1}{y}   \gnl
    \gmu                    \gnl
    \gend := xy,
\quad
     \gbeg{2}{3}
    \got{2}{x}\gnl
    \gcmu                   \gnl
    \gend := \Delta(x),
\quad
    \gbeg{1}{3}
    \got{1}{x}\gnl
    \gcu{1}                 \gnl
    \gend := \epsilon(x)
\quad
    \gbeg{2}{3}
    \got{1}{x} \got{1}{y}   \gnl
    \cruce                   \gnl
    \gend := x\otimes y \mapsto y \otimes x,
\quad\mbox{and}\quad
    \gbeg{1}{3}
    \got{1}{x}\gnl
    \linea                 \gnl
    \gend := x
\end{displaymath}
Compositions of maps are written from top to bottom. For instance, the equalities
\begin{equation}
\sum \epsilon(x_{(1)})x_{(2)} = x = \sum \epsilon(x_{(2)})x_{(1)}
\end{equation}
are encoded in the following equalities of diagrams
\begin{equation}
\label{eq:counit}
\gbeg{2}{3}
\got{2}{x}              \gnl
\gcmu                   \gnl
\gcu{1}         \linea  \gnl
\gend
=
\gbeg{1}{3}
\got{1}{x}              \gnl
\linea  \gnl
\linea  \gnl
\gend
=
\gbeg{2}{3}
\got{2}{x}          \gnl
\gcmu               \gnl
\linea      \gcu{1} \gnl
\gend
\end{equation}
The left and right Hopf-Moufang identities are represented by
\begin{displaymath}
\gbeg{4}{9}
\got{2}{x} \got{1}{y} \got{1}{z}    \gnl
\gcmu       \gcl{1}     \gcl{1}     \gnl
\gcl{1}     \gbr        \gcl{1}     \gnl
\gcl{1}     \gcl{1}     \gmu        \gnl
\gcl{1}     \gcl{1}     \gcn{1}{1}{2}{1}    \gnl
\gcl{1}     \gmu                    \gnl
\gcl{1}     \gcn{1}{1}{2}{1}        \gnl
\gmu                                \gnl
\gend
=
\gbeg{4}{9}
\got{2}{x} \got{1}{y} \got{1}{z}    \gnl
\gcmu       \linea      \linea      \gnl
\linea      \cruce      \linea      \gnl
\gmu        \linea      \linea      \gnl
\gcn{2}{1}{2}{3}        \linea      \linea  \gnl
\gvac{1}    \gmu        \linea      \gnl
\gvac{1}    \gcn{2}{1}{2}{3}        \linea  \gnl
\gvac{2}    \gmu         \gnl
\gend
\quad\quad\mbox{and}\quad\quad
\gbeg{4}{9}
\got{1}{x} \got{1}{y} \got{2}{z}    \gnl
\linea \linea \gcmu \gnl
\linea \cruce \linea \gnl
\linea \linea \gmu \gnl
\linea \linea \gcn{1}{1}{2}{1}\gnl
\linea \gmu \gnl
\linea \gcn{1}{1}{2}{1}\gnl
\gmu \gnl
\gend
=
\gbeg{4}{9}
\got{1}{x} \got{1}{y} \got{2}{z}    \gnl
\linea \linea \gcmu \gnl
\linea \cruce \linea \gnl
\gmu \linea \linea \gnl
\gcn{2}{1}{2}{3} \linea \linea \gnl
\gvac{1}\gmu \linea \gnl
\gcn{3}{1}{4}{5} \linea\gnl
\gvac{2}\gmu \gnl
\gend
\end{displaymath}
 The left and right co-Moufang identities are obtained by turning upside-down the diagrams corresponding to the left and right Moufang identities:
\begin{displaymath}
    \mbox{(left co-Moufang)}
        \gbeg{4}{9}
        \gcmu \gnl
        \gcl{1} \gcn{1}{1}{1}{2}        \gnl
        \gcl{1} \gcmu                   \gnl
        \gcl{1} \gcl{1} \gcn{1}{1}{1}{2}\gnl
        \gcl{1} \gcl{1} \gcmu           \gnl
        \gcl{1} \gbr    \gcl{1}         \gnl
        \gmu    \gcl{1} \gcl{1}         \gnl
        \gend
        =
        \gbeg{4}{9}
        \gvac{2}    \gcmu       \gnl
        \gcn{3}{1}{5}{4}        \gcl{1}     \gnl
        \gvac{1}    \gcmu       \gcl{1}     \gnl
        \gcn{2}{1}{3}{2}        \gcl{1}     \gcl{1} \gnl
        \gcmu       \gcl{1}     \gcl{1}     \gnl
        \gcl{1}     \gbr        \gcl{1}     \gnl
        \gmu        \gcl{1}     \gcl{1}     \gnl
        \gend
        \quad\quad
        \mbox{(right co-Moufang)}
        \gbeg{4}{9}
        \gcmu \gnl
        \gcl{1} \gcn{1}{1}{1}{2}        \gnl
        \gcl{1} \gcmu                   \gnl
        \gcl{1} \gcl{1} \gcn{1}{1}{1}{2}\gnl
        \gcl{1} \gcl{1} \gcmu           \gnl
        \gcl{1} \gbr    \gcl{1}         \gnl
        \gcl{1} \gcl{1} \gmu            \gnl
        \gend
        =
        \gbeg{4}{9}
        \gvac{2}    \gcmu       \gnl
        \gcn{3}{1}{5}{4}        \gcl{1}     \gnl
        \gvac{1}    \gcmu       \gcl{1}     \gnl
        \gcn{2}{1}{3}{2}        \gcl{1}     \gcl{1} \gnl
        \gcmu       \gcl{1}     \gcl{1}     \gnl
        \gcl{1}     \gbr        \gcl{1}     \gnl
        \gcl{1}     \gcl{1}     \gmu        \gnl
        \gend
\end{displaymath}
In other words \cite{KM09},
\begin{equation}
      \sum x_{(1)}x_{(2)(2)(1)} \otimes x_{(2)(1)} \otimes x_{(2)(2)(2)} =
      \sum x_{(1)(1)(1)} x_{(1)(2)} \otimes x_{(1)(1)(2)} \otimes x_{(2)}
\end{equation}
and
\begin{equation}
      \sum x_{(1)} \otimes x_{(2)(2)(1)} \otimes x_{(2)(1)} x_{(2)(2)(2)} =
      \sum x_{(1)(1)(1)} \otimes x_{(1)(2)} \otimes x_{(1)(1)(2)}x_{(2)}.
\end{equation}

In this paper we will prove that any bialgebra deformation $U_h(\m)$ of $U(\m)$, where $\m$ is a finite-dimensional central simple Malcev algebra, satisfying the left and right co-Moufang identities is coassociative. In the  case that $\m = \M(\alpha,\beta,\gamma)$ these deformations are also cocommutative. Hence, any bialgebra deformation of $U(\M(\alpha,\beta,\gamma))$ satisfying the left and right Moufang and co-Moufang identities is trivial. This result reveals an extraordinary rigidity of the universal enveloping algebra of non-Lie tangent algebras.

\section{Cocommutativity}
Some consequences of the co-Moufang identities will be frequently used in our arguments, where we will always assume that $\m$ is a Malcev algebra. We collect them in the following lemma.

\begin{lemma}\label{lem:basic}
  Any bialgebra deformation of $U(\m)$ satisfying the left and right co-Moufang identities also satisfies
  \begin{displaymath}
    \begin{array}{cccccc}
      \mbox{1)} &
            \gbeg{3}{4}
            \gcmu                               \gnl
            \linea      \gcn{1}{1}{1}{2}        \gnl
            \linea      \gcmu                   \gnl
            \gmu        \linea                  \gnl
            \gend
            =
            \gbeg{3}{4}
            \gvac{1}            \gcmu       \gnl
            \gcn{2}{1}{3}{2}    \linea      \gnl
            \gcmu               \linea      \gnl
            \gmu                \linea      \gnl
            \gend
            \quad\quad &
      \mbox{2)} &
            \gbeg{3}{4}
            \gvac{1}            \gcmu       \gnl
            \gcn{2}{1}{3}{2}    \linea      \gnl
            \gcmu               \linea      \gnl
            \linea              \gmu        \gnl
            \gend
            =
            \gbeg{3}{4}
            \gcmu       \gnl
            \linea      \gcn{1}{1}{1}{2}        \gnl
            \linea      \gcmu                   \gnl
            \linea      \gmu                    \gnl
            \gend
            &
      \mbox{3)} &
            \gbeg{3}{5}
            \gcmu       \gnl
            \linea      \gcn{1}{1}{1}{2}        \gnl
            \linea      \gcmu                   \gnl
            \linea      \cruce                  \gnl
            \gmu        \linea                  \gnl
            \gend
            =
            \gbeg{3}{5}
            \gvac{1}            \gcmu       \gnl
            \gcn{2}{1}{3}{2}    \linea      \gnl
            \gcmu               \linea      \gnl
            \linea              \cruce      \gnl
            \gmu                \linea      \gnl
            \gend
            \\ &&&&& \\
      \mbox{4)} &
            \gbeg{3}{5}
            \gvac{1}            \gcmu       \gnl
            \gcn{2}{1}{3}{2}    \linea      \gnl
            \gcmu               \linea      \gnl
            \cruce              \linea      \gnl
            \linea              \gmu        \gnl
            \gend
            =
            \gbeg{3}{5}
            \gcmu                               \gnl
            \linea      \gcn{1}{1}{1}{2}        \gnl
            \linea      \gcmu                   \gnl
            \cruce      \linea                  \gnl
            \linea      \gmu                    \gnl
            \gend
            &
      \mbox{5)} &
            \gbeg{4}{7}
            \gcmu                           \gnl
            \gcl{1} \gcn{1}{1}{1}{2}        \gnl
            \gcl{1} \gcmu                   \gnl
            \gcl{1} \gcl{1} \gcn{1}{1}{1}{2}\gnl
            \gcl{1} \gcl{1} \gcmu           \gnl
            \linea  \cruce  \linea          \gnl
            \gmu    \linea  \linea          \gnl
            \gend
            =
            \gbeg{4}{7}
            \gvac{1}    \gwcm{3}       \gnl
            \gcn{3}{1}{3}{2}        \gcl{1}     \gnl
            \gcmu      \gvac{1}     \linea     \gnl
            \linea      \gcn{1}{1}{1}{2} \gvac{1}       \linea \gnl
            \linea      \gcmu       \linea      \gnl
            \linea      \cruce       \linea     \gnl
            \gmu        \linea      \linea       \gnl
            \gend
             &
      \mbox{6)} &

            \gbeg{4}{7}
            \gvac{2}            \gcmu                   \gnl
            \gvac{1}            \gcn{2}{1}{3}{2}\linea  \gnl
            \gvac{1}            \gcmu           \linea  \gnl
            \gcn{2}{1}{3}{2}    \linea          \linea  \gnl
            \gcmu               \linea           \linea \gnl
            \linea              \cruce           \linea \gnl
            \linea              \linea           \gmu   \gnl
            \gend
            =
            \gbeg{4}{7}
            \gwcm{3}                                        \gnl
            \linea  \gvac{1}            \gcn{1}{1}{1}{2}    \gnl
            \linea  \gvac{1}            \gcmu               \gnl
            \linea  \gcn{2}{1}{3}{2}    \linea              \gnl
            \linea  \gcmu               \linea              \gnl
            \linea  \cruce              \linea              \gnl
            \linea  \linea              \gmu                \gnl
            \gend
    \end{array}
    \end{displaymath}
\end{lemma}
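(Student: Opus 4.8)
The plan is to derive all six identities from the two co-Moufang identities (the hypotheses), using nothing more than the counit axioms \eqref{eq:counit} for items 1)--4), and, for items 5)--6), the already-established identities 3) and 4) applied to a sub-coproduct. Items 1)--4) are two-legged and will come from \emph{capping} one output leg of a three-legged co-Moufang identity with the counit $\epsilon$; items 5)--6) keep all three legs and are genuine reassociations rather than counit collapses.

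First I would treat 1)--4). Writing the left co-Moufang identity as
\[ \sum x_{(1)}x_{(2)(2)(1)} \otimes x_{(2)(1)} \otimes x_{(2)(2)(2)} = \sum x_{(1)(1)(1)} x_{(1)(2)} \otimes x_{(1)(1)(2)} \otimes x_{(2)}, \]
apply $\epsilon$ to the \emph{middle} leg: on the left $\sum \epsilon(x_{(2)(1)})\,x_{(2)(2)(1)} \otimes x_{(2)(2)(2)} = \Delta(x_{(2)})$ by \eqref{eq:counit}, giving $\sum x_{(1)}x_{(2)(1)} \otimes x_{(2)(2)}$; on the right $\sum \epsilon(x_{(1)(1)(2)})\,x_{(1)(1)(1)} = x_{(1)(1)}$, giving $\sum x_{(1)(1)}x_{(1)(2)} \otimes x_{(2)}$. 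That is exactly 1). Capping instead the \emph{third} leg of the same identity yields 3); capping the third, resp.\ first, leg of the right co-Moufang identity yields 2), resp.\ 4). In each of these four cases a single application of \eqref{eq:counit} performs the collapse. Diagrammatically the move is simply to attach a counit to the chosen output and slide it up through the adjacent comultiplication. (Capping the one remaining leg of either identity produces only the tautology $\Delta=\Delta$, because that leg carries the full product of the two multiplied factors and $\epsilon$ is an algebra map; this is why precisely these four configurations appear.)

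Next I would prove 5) and 6) in two steps each. By inspection the left-hand diagram of 5) is the left-hand side of the left co-Moufang identity, so the co-Moufang identity rewrites it as $\sum x_{(1)(1)(1)} x_{(1)(2)} \otimes x_{(1)(1)(2)} \otimes x_{(2)}$. Now read the first two tensor legs as the image of $y := x_{(1)}$ under the map $y \mapsto \sum y_{(1)(1)} y_{(2)} \otimes y_{(1)(2)}$, with the leg $x_{(2)}$ an inert spectator; this map is the right-hand side of identity 3), so 3) (read right-to-left) replaces it by $y \mapsto \sum y_{(1)} y_{(2)(2)} \otimes y_{(2)(1)}$, producing $\sum x_{(1)(1)} x_{(1)(2)(2)} \otimes x_{(1)(2)(1)} \otimes x_{(2)}$, which is the right-hand diagram of 5). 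Identity 6) is symmetric: its left-hand diagram is the right-hand side of the right co-Moufang identity, which that identity rewrites as $\sum x_{(1)} \otimes x_{(2)(2)(1)} \otimes x_{(2)(1)}x_{(2)(2)(2)}$; applying identity 4) to the sub-coproduct $y := x_{(2)}$ (with $x_{(1)}$ the spectator) converts the last two legs into $\sum x_{(2)(1)(2)} \otimes x_{(2)(1)(1)}x_{(2)(2)}$, giving the right-hand diagram of 6). The key point that makes ``applying 3) or 4) to a sub-coproduct'' legitimate is that 3) and 4) are equalities of \emph{maps}, so tensoring or pre-composing them with $\Delta_h$ on the spectator leg is automatic.

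The hard part will be purely the bookkeeping: one must track the deeply nested Sweedler indices through each counit collapse, and in 5)--6) correctly isolate the single sub-coproduct on which 3) or 4) acts while carrying the remaining leg along untouched. Because $\Delta_h$ is \emph{not} assumed coassociative, I must be vigilant never to silently reassociate a triple coproduct; every rearrangement has to be justified by one of the co-Moufang identities or by \eqref{eq:counit}. This is exactly the sort of accounting that the graphical calculus renders transparent, so I expect each of the six verifications to reduce to a short sequence of diagram moves once the correct leg, respectively sub-coproduct, has been identified.
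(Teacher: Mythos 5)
Your proposal follows the paper's proof exactly: items 1)--4) are obtained by capping one output leg of a co-Moufang identity with the counit and collapsing via \eqref{eq:counit} (the paper does 1) explicitly by evaluating $\Id\tilde{\otimes}\epsilon\tilde{\otimes}\Id$ on the left co-Moufang identity and declares 2)--4) ``similar''), and 5), resp.\ 6), is derived from the left, resp.\ right, co-Moufang identity together with 3), resp.\ 4), applied to a sub-coproduct, just as you describe. One small correction to your leg bookkeeping: identity 2) comes from capping the \emph{middle} leg of the right co-Moufang identity, not the third --- the third leg of that identity is the one carrying the product $x_{(2)(1)}x_{(2)(2)(2)}$, and, as your own parenthetical observes, capping the product leg returns only the tautology $\Delta_h=\Delta_h$.
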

\begin{proof}
  By evaluating $\Id \tilde{\otimes} \epsilon \tilde{\otimes} \Id$ on both sides of the left co-Moufang identity and using the property (\ref{eq:counit}) of the counit we get 1):
  \begin{displaymath}
    \gbeg{4}{7}
    \gcmu                                       \gnl
    \gcl{1} \gcn{1}{1}{1}{2}                    \gnl
    \gcl{1} \gcmu                               \gnl
    \gcl{1} \gcl{1}             \gcn{1}{1}{1}{2}\gnl
    \gcl{1} \gcl{1}             \gcmu           \gnl
    \gcl{1} \gbr                \gcl{1}         \gnl
    \gmu    \gcu{1}             \gcl{1}         \gnl
    \gend
    -
    \gbeg{4}{7}
    \gvac{2}                \gcmu                   \gnl
    \gcn{3}{1}{5}{4}        \gcl{1}                 \gnl
    \gvac{1}                \gcmu       \gcl{1}     \gnl
    \gcn{2}{1}{3}{2}        \gcl{1}     \gcl{1}     \gnl
    \gcmu                   \gcl{1}     \gcl{1}     \gnl
    \gcl{1}                 \gbr        \gcl{1}     \gnl
    \gmu                    \gcu{1}     \gcl{1}     \gnl
    \gend
    =
    \gbeg{3}{4}
    \gcmu                               \gnl
    \linea      \gcn{1}{1}{1}{2}        \gnl
    \linea      \gcmu                   \gnl
    \gmu        \linea                  \gnl
    \gend
    -
    \gbeg{3}{4}
    \gvac{1}            \gcmu       \gnl
    \gcn{2}{1}{3}{2}    \linea      \gnl
    \gcmu               \linea      \gnl
    \gmu                \linea      \gnl
    \gend
  \end{displaymath}
  The proof of equalities 2), 3) and 4) is similar. Equality 5) is a consequence of 3) and the left co-Moufang identity. Identity 6) is a consequence of 4) and the right co-Moufang identity. \qed
\end{proof}
 All the results can be established without diagrams, and we will do so sometimes to compare both approaches. However, it will become more and more apparent that diagrams are a valuable notation for dealing with non-coassociative bialgebras. For instance, part 1) in Lemma~\ref{lem:basic} could be rewritten as
 \begin{displaymath}
   \sum x_{\si}\bullet x_{\sii\si} \tilde{\otimes} x_{\sii\sii} = \sum x_{\si\si}\bullet x_{\si\sii} \tilde{\otimes} x_{\sii}
 \end{displaymath}
and, as we have seen, it is derived from the left co-Moufang identity by
\begin{eqnarray*}
    && \sum x_{\si} \bullet x_{\sii\si}\tilde{\otimes} 1 \tilde{\otimes} x_{\sii\sii}
     =\\
    && \quad\quad \sum x_{\si} \bullet x_{\sii\sii\si}\tilde{\otimes} \epsilon(x_{\sii\si})1 \tilde{\otimes} x_{\sii\sii\sii}  = \\
    && \quad\quad \sum x_{\si\si\si} \bullet x_{\si\sii} \tilde{\otimes} \epsilon(x_{\si\si\sii})1 \tilde{\otimes} x_{\sii}  =\\
    && \quad\quad \sum x_{\si\si} \bullet x_{\si\sii} \tilde{\otimes} 1 \tilde{\otimes} x_{\sii}.
\end{eqnarray*}
When more intensive calculus is needed this approach is less useful.

The following lemma is well-known and its proof is obvious.
\begin{lemma}\label{lem:induction}
  Let $(A_h,\bullet)$ and $(B_h,\bullet)$ be topologically free algebras, that we identify with $A[[h]]$ and $B[[h]]$ as $k[[h]]$-modules for some $k$-algebras $A$ and $B$, and $\varphi_n,\psi_n \colon A \rightarrow B$ be linear maps ($n\geq 0$) such that the maps $\varphi_h = \sum_{n\geq 0} \varphi_n h^n$, $\psi_h = \sum_{n\geq 0} \psi_n h^n$ satisfy $\varphi_h(x \bullet y) = \varphi_h(x)\bullet \varphi_h(y)$ and $\psi_h(x \bullet y) = \psi_h(x) \bullet \psi_h(y)$ for all $x,y \in A_h$. If $\varphi_i = \psi_i$ $i=0,\dots, n-1$ then
  \begin{displaymath}
    (\varphi_n - \psi_n)(xy) = (\varphi_n - \psi_n)(x) \varphi_0(y) + \psi_0(x)(\varphi_n - \psi_n)(y)
  \end{displaymath}
  for al $x,y \in A$. In particular, if $A_h = U_h(\m)$, $\varphi_i = \psi_i$ $i=0,\dots, n-1$ and $\varphi_n\vert_\m = \psi_n\vert_\m$ then $\varphi_n = \psi_n$.
\end{lemma}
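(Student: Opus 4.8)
The plan is to prove the displayed identity by comparing the coefficients of $h^n$ in the two homomorphism conditions, and then to deduce the ``in particular'' clause from the observation that $\varphi_n - \psi_n$ is a twisted derivation vanishing on the generators of $U(\m)$. First I would record the deformed products as power series: write $x \bullet y = \sum_{m \geq 0} \mu_m(x,y)\, h^m$ on $A_h$, with $\mu_0(x,y) = xy$ the product of $A$, and $u \bullet v = \sum_{m \geq 0} \nu_m(u,v)\, h^m$ on $B_h$, with $\nu_0$ the product of $B$; here each $\mu_m,\nu_m$ is a $k$-bilinear map. For fixed $x,y \in A$ (regarded as constant series in $A_h$) the left-hand side of $\varphi_h(x\bullet y) = \varphi_h(x)\bullet\varphi_h(y)$ expands as $\sum_{k,m} \varphi_k(\mu_m(x,y))\,h^{k+m}$ and the right-hand side as $\sum_{i,j,m}\nu_m(\varphi_i(x),\varphi_j(y))\,h^{i+j+m}$, and likewise for $\psi_h$.

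Next I would take the coefficient of $h^n$ in each condition and subtract the $\psi$-version from the $\varphi$-version. On the left the surviving coefficient is $\sum_{k+m=n}(\varphi_k - \psi_k)(\mu_m(x,y))$; since $\varphi_k = \psi_k$ for $k < n$, only the term $k = n$, $m = 0$ contributes, giving $(\varphi_n - \psi_n)(xy)$. On the right the coefficient is $\sum_{i+j+m=n}\bigl[\nu_m(\varphi_i(x),\varphi_j(y)) - \nu_m(\psi_i(x),\psi_j(y))\bigr]$; because $i + j + m = n$, every term with $i,j < n$ cancels (the inputs to $\nu_m$ agree), so for $n \geq 1$ only $i = n,\ j = m = 0$ and $j = n,\ i = m = 0$ remain. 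Using $\varphi_0 = \psi_0$ and the bilinearity of $\nu_0$, these two terms are precisely $(\varphi_n-\psi_n)(x)\,\varphi_0(y)$ and $\psi_0(x)\,(\varphi_n - \psi_n)(y)$, which gives the claimed identity.

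For the ``in particular'' clause, set $\delta := \varphi_n - \psi_n$. The identity just proved says that $\delta$ is a $(\varphi_0,\psi_0)$-derivation $U(\m) \to B$. Reducing the unitality of $\varphi_h,\psi_h$ modulo $h$ gives $\delta(1) = 0$, and the hypothesis $\varphi_n|_{\m} = \psi_n|_{\m}$ gives $\delta|_{\m} = 0$. Since $U(\m)$ is generated as a unital (non-associative) algebra by $\m$, I would finish by induction on the length of a monomial $w = uv$ in elements of $\m$: the twisted Leibniz rule $\delta(uv) = \delta(u)\varphi_0(v) + \psi_0(u)\delta(v)$ together with the inductive vanishing $\delta(u) = \delta(v) = 0$ forces $\delta(w) = 0$, whence $\delta \equiv 0$ and $\varphi_n = \psi_n$.

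The computation is routine; the only points requiring care are the bookkeeping showing that exactly the two cross-terms survive after subtraction, and that the genuinely deformed products $\mu_m,\nu_m$ with $m > 0$ never enter the degree-$n$ difference once the lower coefficients agree. The one structural input is that $U(\m)$ is generated by its primitives $\m$ despite the non-associativity of its product, so that a twisted derivation vanishing on $\m$ (and on $1$) really does propagate through all bracketings to all of $U(\m)$; this is where I expect the genuine content, as opposed to formal manipulation, to lie.
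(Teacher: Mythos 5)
Your proof is correct; the paper itself gives no argument (it declares the lemma ``well-known'' with an ``obvious'' proof), and your comparison of the coefficients of $h^n$ in the two homomorphism identities, followed by the induction over monomials in $\m$ using the twisted Leibniz rule, is exactly the intended standard argument. The one point genuinely worth flagging is the one you do flag: concluding $\varphi_n=\psi_n$ on all of $U(\m)$ requires $(\varphi_n-\psi_n)(1)=0$ since $1\notin\m$, and you correctly obtain this from unitality (equivalently, from the Leibniz identity evaluated at $x=y=1$).
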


The map $Q\colon U(\m) \rightarrow U(\m)$ given by
\begin{displaymath}
Q(x) := \sum x_{(1)}x_{(2)} \quad \quad Q = \gbeg{2}{2} \gcmu \gnl \gmu \gnl \gend
\end{displaymath}
will play an important role. Since $U(\m)$ is spanned by $\{ a^n \mid a \in \m, n\geq 0\}$ \cite{PS04} and
\begin{displaymath}
  Q(a^n) = \sum \binom{n}{i} a^i a^{n-i} = 2^n a^n
\end{displaymath}
then $Q$ is semisimple with eigenvalues $\{2^n \mid n \geq 0\}$--notice that there is no ambiguity in the notation $a^n$ when $a\in \m$ since $ka$ is an abelian Lie subalgebra of $\m$, so the subalgebra generated by $a$ in $U(\m)$ is isomorphic to the associative and commutative algebra $U(k a)$. The primitive elements $\m$ form the subspace spanned by the eigenvectors of eigenvalue $2$.

The comultiplication  $\Delta_h\vert_{U(\m)}$ and the multiplication $p_h\vert_{U(\m)}$ can be developed as a power series on $h$
\begin{displaymath}
  \Delta_h\vert_{U(\m)} = \Delta_0 + h \Delta_1 + h^2 \Delta_2 + \cdots,\quad p_h\vert_{U(\m)} = p_0 + h p_1 + h^2 p_2 + \cdots
\end{displaymath}
where $p_i \colon U(\m) \otimes U(\m) \rightarrow U(\m)$ and $\Delta_i \colon U(\m) \rightarrow U(\m) \otimes U(\m)$ are linear maps and $\Delta_0 = \Delta$, $p_0 = p$ are the comultiplication and the multiplication on $U(\m)$.
The following notation will be very helpful:
\begin{displaymath}
  \gbeg{2}{2}
  \etiqueta{0}\gcmu\gnl
  \linea \linea \gnl
  \gend
  :=
  \Delta_0,
  \quad
  \gbeg{2}{2}
  \etiqueta{+}\gcmu\gnl
  \linea  \linea \gnl
  \gend
  :=
  h \Delta_1 + h^2 \Delta_2 + \cdots
\end{displaymath}
so, when restricted to $U(\m)$,
\begin{equation}
\label{eq:decomposition}
\gbeg{2}{2} \gcmu\gnl \linea \linea \gnl\gend = \gbeg{2}{2} \etiqueta{0}\gcmu\gnl \linea \linea \gnl \gend + \gbeg{2}{2} \etiqueta{+}\gcmu\gnl \linea \linea \gnl\gend.
\end{equation}
We will use a similar notation for $p_h\vert_{U(\m)}$ too.
\begin{theorem}\label{thm:cocommutative}
  Any bialgebra deformation of $U(\M(\alpha,\beta,\gamma))$ satisfying the left and right co-Moufang identities is cocommutative.
\end{theorem}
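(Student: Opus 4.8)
The plan is to induct on the order $n$ in $h$ and show that the antisymmetric part of $\Delta_h$ vanishes at every order. Write $\Delta_h\vert_{U(\m)} = \sum_{n\geq 0} h^n \Delta_n$ and let $\tau$ denote the flip $x \otimes y \mapsto y \otimes x$; cocommutativity is the assertion $\tau \Delta_h = \Delta_h$, i.e. $\tau\Delta_n = \Delta_n$ for all $n$. Both $\Delta_h$ and $\tau\Delta_h$ are algebra homomorphisms $U_h(\m) \rightarrow U_h(\m)\tilde{\otimes} U_h(\m)$: the first by the bialgebra axioms, and the second because $\tau$ is an algebra automorphism of the completed tensor square (this uses only that its product is componentwise, not associativity). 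Since $\Delta_0$ is cocommutative we have $\tau\Delta_0 = \Delta_0$, so applying Lemma~\ref{lem:induction} to $\varphi_h = \Delta_h$ and $\psi_h = \tau\Delta_h$ reduces the inductive step to proving $\tau\Delta_n = \Delta_n$ on the generators $\m$, under the hypothesis $\tau\Delta_i = \Delta_i$ for $i<n$.

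First I would locate the object to be killed, namely $\gamma_n := (\Id - \tau)\Delta_n\vert_\m$. Because the counit is undeformed and multiplicative, the counit axiom forces $\Delta_n(a)\in \bar{U}\otimes \bar{U}$ for $a\in\m$ and $n\geq 1$, where $\bar{U} = \ker\epsilon$; thus $\gamma_n$ is an antisymmetric map $\m \rightarrow \bar{U}\otimes\bar{U}$. To keep the problem finite-dimensional I would grade everything by the semisimple operator $Q$: writing $U(\m) = \bigoplus_j U_j$ for its $2^j$-eigenspaces (so $U_0 = k1$, $U_1 = \m$) and using the compatibility $\Delta_0 Q = (Q\otimes Q)\Delta_0$, I would decompose $\bar{U}\otimes\bar{U} = \bigoplus_{i,j\geq 1} U_i\otimes U_j$ and focus on the bottom component of $\gamma_n$ in $\m\otimes\m$.

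The heart of the argument is to expand the identities of Lemma~\ref{lem:basic} (equivalently the left and right co-Moufang identities) as power series in $h$ and read off the coefficient of $h^n$ on a primitive $a\in\m$. Here the undeformed unit is decisive: since $p_i(1,\cdot) = p_i(\cdot,1) = 0$ for $i\geq 1$ and the order-zero coproducts of $a$ produce only copies of $1$ and a single $a$, every term in which the product is deformed but all coproducts have order zero drops out. What survives at order $h^n$ is a part linear in $\Delta_n$ together with contributions built from $\Delta_i, p_j$ with $i,j<n$; using the induction hypothesis (lower-order cocommutativity), and the fact that the particular combinations appearing in Lemma~\ref{lem:basic} are arranged so that the lower-order contributions on the two sides cancel, one is left with a \emph{homogeneous} linear condition on $\gamma_n$. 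After projecting to the $\m\otimes\m$ component by $Q$, I expect this to be a cocycle-type equation, but reinforced by extra constraints coming from the non-coassociative co-Moufang relations — constraints strictly stronger than the Lie-bialgebra cobracket condition.

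The final step, and the main obstacle, is to show that this reinforced constraint has only the zero solution for $\m = \M(\alpha,\beta,\gamma)$. This is exactly where central simplicity and genuine non-associativity enter, and where the Malcev case departs from the Lie case, in which nonzero cobrackets (quantum groups) survive. For the seven-dimensional simple non-Lie Malcev algebra the space of equivariant antisymmetric tensors is already tiny, and the additional co-Moufang constraints should eliminate even it, forcing $\gamma_n = 0$ on $\m\otimes\m$; propagating this back up the $Q$-grading then annihilates the higher bidegree components as well. By Lemma~\ref{lem:induction} this yields $\tau\Delta_n = \Delta_n$, completing the induction. The two delicate points I would expect to absorb most of the work are the precise bookkeeping of the order-$h^n$ expansion through the graphical calculus — in particular verifying that the lower-order associativity defects really do cancel in the combinations of Lemma~\ref{lem:basic} — and the explicit representation-theoretic vanishing specific to $\M(\alpha,\beta,\gamma)$.
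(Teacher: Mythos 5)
Your overall strategy coincides with the paper's: induct on the order in $h$, use Lemma~\ref{lem:induction} to reduce the inductive step to the restriction to $\m$, exploit the undeformed unit/counit and the consequences of the co-Moufang identities collected in Lemma~\ref{lem:basic} at order $h^n$, localize the obstruction via the semisimple operator $Q$, and finish with a vanishing statement specific to $\M(\alpha,\beta,\gamma)$. However, as written the proposal leaves the two load-bearing steps as expectations rather than proofs, and these are not routine.

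First, the "bookkeeping" you defer is the entire content of the paper's argument. The precise output needed is the operator identity $(Q\otimes\Id)\bigl((\Delta_n-\Delta_n^{\op})(a)\bigr)=2(\Delta_n-\Delta_n^{\op})(a)$ for $a\in\m$; it is obtained by comparing $(p_h\Delta_h\tilde{\otimes}\Id)\Delta_h$ with $(p_h\Delta_h\tilde{\otimes}\Id)\Delta_h^{\op}$ and rewriting each via a carefully ordered application of parts 1)--4) of Lemma~\ref{lem:basic}, invoking the induction hypothesis three separate times to trade deformed coproducts for $\Delta_0$ in the right slots. Your outline does not identify which combination of the co-Moufang consequences produces a closed equation, nor why the lower-order cross terms cancel rather than merely rearrange; asserting that they are "arranged so that the lower-order contributions on the two sides cancel" is precisely what has to be checked. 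Second, the terminal step cannot be settled by the observation that the space of equivariant antisymmetric tensors is "tiny": for a simple \emph{Lie} algebra the analogous space of skew $1$-cocycles $\g\to\g\wedge\g$ is nonzero (this is where quantum groups live), so the vanishing is a genuinely special feature of the non-Lie algebra $\M(\alpha,\beta,\gamma)$. The paper does not rederive it; it imports it from \cite{MP}, after having established only that $(\Delta_n-\Delta_n^{\op})(a)\in\m\wedge\m$ and that $\Delta_n-\Delta_n^{\op}$ is a $1$-cocycle — no "additional co-Moufang constraints" are used at that stage. Without either reproducing that computation for $\M(\alpha,\beta,\gamma)$ or citing it, the induction does not close.
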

\begin{proof}
  We will prove by induction that $\Delta_n = \Delta^{op}_n$. Since $U(\m)$ is cocommutative then $\Delta_0 = \Delta^{\op}_0$. Let us assume that $\Delta_1 = \Delta^{\op}_1, \dots, \Delta_{n-1} = \Delta^{\op}_{n-1}$.  By Lemma~\ref{lem:basic} we have
  \begin{displaymath}
    D := \gbeg{3}{5}
    \gvac{1}            \gcmu   \gnl
    \gcn{2}{1}{3}{2}    \linea  \gnl
    \gcmu               \linea  \gnl
    \gmu                \linea  \gnl
    \gcn{2}{1}{2}{1}    \linea  \gnl
    \gend
    -
    \gbeg{3}{5}
    \gvac{1}    \gcmu        \gnl
    \gvac{1}    \cruce  \gnl
    \gcn{2}{1}{3}{2}    \linea      \gnl
    \gcmu               \linea      \gnl
    \gmu    \linea      \gnl
    \gend
    =
    \gbeg{3}{5}
    \gcmu       \gnl
    \linea      \gcn{1}{1}{1}{2}        \gnl
    \linea      \gcmu                   \gnl
    \gmu        \linea                  \gnl
    \gcn{2}{1}{2}{1}      \linea  \gnl
    \gend
    -
    \gbeg{3}{5}
    \gvac{1}    \gcmu        \gnl
    \gvac{1}    \cruce  \gnl
    \gcn{2}{1}{3}{2}    \linea      \gnl
    \gcmu               \linea      \gnl
    \gmu    \linea      \gnl
    \gend
    =
    \gbeg{3}{6}
    \gcmu       \gnl
    \linea      \gcn{1}{1}{1}{2}        \gnl
    \linea      \gcmu                   \gnl
    \gmu        \linea                  \gnl
    \gcn{2}{1}{2}{1}      \linea  \gnl
    \linea  \gvac{1} \linea  \gnl
    \gend
    -
    \gbeg{3}{6}
    \gvac{1}    \gcmu        \gnl
    \gcn{2}{1}{3}{2}    \linea      \gnl
    \gcmu               \linea      \gnl
    \cruce  \linea  \gnl
    \linea  \cruce  \gnl
    \gmu    \linea      \gnl
    \gend
    =
    \gbeg{3}{6}
    \gcmu       \gnl
    \linea      \gcn{1}{1}{1}{2}        \gnl
    \linea      \gcmu                   \gnl
    \gmu        \linea                  \gnl
    \gcn{2}{1}{2}{1}      \linea  \gnl
    \linea  \gvac{1} \linea  \gnl
    \gend
    -
    \gbeg{3}{6}
    \gvac{1}    \etiqueta{0}\gcmu        \gnl
    \gcn{2}{1}{3}{2}    \linea      \gnl
    \gcmu               \linea      \gnl
    \cruce  \linea  \gnl
    \linea  \cruce  \gnl
    \gmu    \linea      \gnl
    \gend
    -
    \gbeg{3}{6}
    \gvac{1}    \etiqueta{+}\gcmu        \gnl
    \gcn{2}{1}{3}{2}    \linea      \gnl
    \gcmu               \linea      \gnl
    \cruce  \linea  \gnl
    \linea  \cruce  \gnl
    \gmu    \linea      \gnl
    \gend
\end{displaymath}
Since $\Delta_i = \Delta^{\op}_i$ $i = 0, \dots, n-1$ then, modulo $h^{n+1}$,
\begin{displaymath}
  D \equiv \gbeg{3}{6}
    \gcmu       \gnl
    \linea      \gcn{1}{1}{1}{2}        \gnl
    \linea      \gcmu                   \gnl
    \gmu        \linea                  \gnl
    \gcn{2}{1}{2}{1}      \linea  \gnl
    \linea  \gvac{1} \linea  \gnl
    \gend
    -
    \gbeg{3}{6}
    \gvac{1}    \etiqueta{0}\gcmu        \gnl
    \gcn{2}{1}{3}{2}    \linea      \gnl
    \gcmu               \linea      \gnl
    \cruce  \linea  \gnl
    \linea  \cruce  \gnl
    \gmu    \linea      \gnl
    \gend
    -
    \gbeg{3}{6}
    \gvac{1}    \etiqueta{+}\gcmu        \gnl
    \gcn{2}{1}{3}{2}    \linea      \gnl
    \gcmu               \linea      \gnl
    \linea  \cruce  \gnl
    \gmu    \linea      \gnl
    \gcn{2}{1}{2}{1} \linea  \gnl
    \gend
    =
    \gbeg{3}{6}
    \gcmu       \gnl
    \linea      \gcn{1}{1}{1}{2}        \gnl
    \linea      \gcmu                   \gnl
    \gmu        \linea                  \gnl
    \gcn{2}{1}{2}{1}      \linea  \gnl
    \linea  \gvac{1} \linea  \gnl
    \gend
    -
    \gbeg{3}{6}
    \gvac{1}    \etiqueta{0}\gcmu        \gnl
    \gcn{2}{1}{3}{2}    \linea      \gnl
    \gcmu               \linea      \gnl
    \cruce  \linea  \gnl
    \linea  \cruce  \gnl
    \gmu    \linea      \gnl
    \gend
    +
    \gbeg{3}{6}
    \gvac{1}    \etiqueta{0}\gcmu        \gnl
    \gcn{2}{1}{3}{2}    \linea      \gnl
    \gcmu               \linea      \gnl
    \linea  \cruce  \gnl
    \gmu    \linea      \gnl
    \gcn{2}{1}{2}{1} \linea  \gnl
    \gend
    -
    \gbeg{3}{6}
    \gvac{1}    \gcmu        \gnl
    \gcn{2}{1}{3}{2}    \linea      \gnl
    \gcmu               \linea      \gnl
    \linea  \cruce  \gnl
    \gmu    \linea      \gnl
    \gcn{2}{1}{2}{1} \linea  \gnl
    \gend
\end{displaymath}
By Lemma~\ref{lem:basic}
\begin{displaymath}
  D \equiv \gbeg{3}{6}
    \gcmu       \gnl
    \linea      \gcn{1}{1}{1}{2}        \gnl
    \linea      \gcmu                   \gnl
    \gmu        \linea                  \gnl
    \gcn{2}{1}{2}{1}      \linea  \gnl
    \linea  \gvac{1} \linea  \gnl
    \gend
    -
    \gbeg{3}{6}
    \gvac{1}    \etiqueta{0}\gcmu        \gnl
    \gcn{2}{1}{3}{2}    \linea      \gnl
    \gcmu               \linea      \gnl
    \cruce  \linea  \gnl
    \linea  \cruce  \gnl
    \gmu    \linea      \gnl
    \gend
    +
    \gbeg{3}{6}
    \gvac{1}    \etiqueta{0}\gcmu        \gnl
    \gcn{2}{1}{3}{2}    \linea      \gnl
    \gcmu               \linea      \gnl
    \linea  \cruce  \gnl
    \gmu    \linea      \gnl
    \gcn{2}{1}{2}{1} \linea  \gnl
    \gend
    -
    \gbeg{3}{6}
    \gcmu       \gnl
    \linea      \gcn{1}{1}{1}{2}        \gnl
    \linea      \gcmu                   \gnl
    \linea      \cruce                  \gnl
    \gmu        \linea                  \gnl
    \gcn{2}{1}{2}{1}   \linea  \gnl
    \gend
    =
    \gbeg{3}{6}
    \gcmu       \gnl
    \linea      \gcn{1}{1}{1}{2}        \gnl
    \linea      \gcmu                   \gnl
    \gmu        \linea                  \gnl
    \gcn{2}{1}{2}{1}      \linea  \gnl
    \linea  \gvac{1} \linea  \gnl
    \gend
    -
    \gbeg{3}{6}
    \gvac{1}    \etiqueta{0}\gcmu        \gnl
    \gcn{2}{1}{3}{2}    \linea      \gnl
    \gcmu               \linea      \gnl
    \cruce  \linea  \gnl
    \linea  \cruce  \gnl
    \gmu    \linea      \gnl
    \gend
    +
    \gbeg{3}{6}
    \gvac{1}    \etiqueta{0}\gcmu        \gnl
    \gcn{2}{1}{3}{2}    \linea      \gnl
    \gcmu               \linea      \gnl
    \linea  \cruce  \gnl
    \gmu    \linea      \gnl
    \gcn{2}{1}{2}{1} \linea  \gnl
    \gend
    -
    \gbeg{3}{6}
    \etiqueta{0}\gcmu       \gnl
    \linea      \gcn{1}{1}{1}{2}        \gnl
    \linea      \gcmu                   \gnl
    \linea      \cruce                  \gnl
    \gmu        \linea                  \gnl
    \gcn{2}{1}{2}{1}   \linea  \gnl
    \gend
    -
    \gbeg{3}{6}
    \etiqueta{+}\gcmu       \gnl
    \linea      \gcn{1}{1}{1}{2}        \gnl
    \linea      \gcmu                   \gnl
    \linea      \cruce                  \gnl
    \gmu        \linea                  \gnl
    \gcn{2}{1}{2}{1}   \linea  \gnl
    \gend
\end{displaymath}
Since $\Delta_i = \Delta^{\op}_i$ $i= 0, \dots, n-1$ then, modulo $h^{n+1}$,
\begin{displaymath}
  D \equiv \gbeg{3}{6}
    \gcmu       \gnl
    \linea      \gcn{1}{1}{1}{2}        \gnl
    \linea      \gcmu                   \gnl
    \gmu        \linea                  \gnl
    \gcn{2}{1}{2}{1}      \linea  \gnl
    \linea  \gvac{1} \linea  \gnl
    \gend
    -
    \gbeg{3}{6}
    \gvac{1}    \etiqueta{0}\gcmu        \gnl
    \gcn{2}{1}{3}{2}    \linea      \gnl
    \gcmu               \linea      \gnl
    \cruce  \linea  \gnl
    \linea  \cruce  \gnl
    \gmu    \linea      \gnl
    \gend
    +
    \gbeg{3}{6}
    \gvac{1}    \etiqueta{0}\gcmu        \gnl
    \gcn{2}{1}{3}{2}    \linea      \gnl
    \gcmu               \linea      \gnl
    \linea  \cruce  \gnl
    \gmu    \linea      \gnl
    \gcn{2}{1}{2}{1} \linea  \gnl
    \gend
    -
    \gbeg{3}{6}
    \etiqueta{0}\gcmu       \gnl
    \linea      \gcn{1}{1}{1}{2}        \gnl
    \linea      \gcmu                   \gnl
    \linea      \cruce                  \gnl
    \gmu        \linea                  \gnl
    \gcn{2}{1}{2}{1}   \linea  \gnl
    \gend
    -
    \gbeg{3}{6}
    \etiqueta{+}\gcmu       \gnl
    \linea      \gcn{1}{1}{1}{2}        \gnl
    \linea      \gcmu                   \gnl
    \gmu        \linea                  \gnl
    \gcn{2}{1}{2}{1}      \linea  \gnl
    \linea  \gvac{1} \linea  \gnl
    \gend
    =
    \gbeg{3}{6}
    \etiqueta{0}\gcmu       \gnl
    \linea      \gcn{1}{1}{1}{2}        \gnl
    \linea      \gcmu                   \gnl
    \gmu        \linea                  \gnl
    \gcn{2}{1}{2}{1}      \linea  \gnl
    \linea  \gvac{1} \linea  \gnl
    \gend
    -
    \gbeg{3}{6}
    \gvac{1}            \etiqueta{0}\gcmu   \gnl
    \gcn{2}{1}{3}{2}    \linea              \gnl
    \gcmu               \linea              \gnl
    \cruce              \linea              \gnl
    \linea              \cruce              \gnl
    \gmu                \linea              \gnl
    \gend
    +
    \gbeg{3}{6}
    \gvac{1}            \etiqueta{0}\gcmu       \gnl
    \gcn{2}{1}{3}{2}    \linea                  \gnl
    \gcmu               \linea                  \gnl
    \linea              \cruce                  \gnl
    \gmu                \linea                  \gnl
    \gcn{2}{1}{2}{1}    \linea                  \gnl
    \gend
    -
    \gbeg{3}{6}
    \etiqueta{0}    \gcmu           \gnl
    \linea          \gcn{1}{1}{1}{2}\gnl
    \linea          \gcmu           \gnl
    \linea          \cruce          \gnl
    \gmu            \linea          \gnl
    \gcn{2}{1}{2}{1}\linea          \gnl
    \gend
\end{displaymath}
Any $a \in \m$ is primitive, i.e., $\Delta_0(a) = a \otimes 1 + 1 \otimes a$ thus
\begin{displaymath}
  \begin{array}{ll}
    \gbeg{3}{6}
    \got{2}{a}                              \gnl
    \etiqueta{0}\gcmu                       \gnl
    \linea              \gcn{1}{1}{1}{2}    \gnl
    \linea              \gcmu               \gnl
    \gmu                \linea              \gnl
    \gcn{2}{1}{2}{1}    \linea              \gnl
    \gend
    =
    \gbeg{2}{2}
    \got{2}{a}  \gnl
    \gcmu       \gnl
    \gend
    + a \otimes 1,
    \quad\quad &
    \gbeg{3}{7}
    \got{4}{a}                              \gnl
    \gvac{1}            \etiqueta{0}\gcmu   \gnl
    \gcn{2}{1}{3}{2}    \linea              \gnl
    \gcmu               \linea              \gnl
    \cruce              \linea              \gnl
    \linea              \cruce              \gnl
    \gmu                \linea              \gnl
    \gend
    =
    \gbeg{2}{3}
    \got{2}{a}  \gnl
    \gcmu       \gnl
    \cruce      \gnl
    \gend
    + a \otimes 1,
    \\ &
    \\
    \gbeg{3}{7}
    \got{4}{a}                                  \gnl
    \gvac{1}            \etiqueta{0}\gcmu       \gnl
    \gcn{2}{1}{3}{2}    \linea                  \gnl
    \gcmu               \linea                  \gnl
    \linea              \cruce                  \gnl
    \gmu                \linea                  \gnl
    \gcn{2}{1}{2}{1}    \linea                  \gnl
    \gend
    =
    \gbeg{2}{2}
    \got{2}{a}  \gnl
    \gcmu       \gnl
    \gend
    + a \otimes 1,
    &
    \gbeg{3}{7}
    \got{2}{a}                      \gnl
    \etiqueta{0}    \gcmu           \gnl
    \linea          \gcn{1}{1}{1}{2}\gnl
    \linea          \gcmu           \gnl
    \linea          \cruce          \gnl
    \gmu            \linea          \gnl
    \gcn{2}{1}{2}{1}\linea          \gnl
    \gend
    =
    \gbeg{2}{3}
    \got{2}{a}  \gnl
    \gcmu       \gnl
    \cruce      \gnl
    \gend
    + a \otimes 1
  \end{array}
\end{displaymath}
and
\begin{displaymath}
    \gbeg{3}{6}
    \got{4}{a}                  \gnl
    \gvac{1}            \gcmu   \gnl
    \gcn{2}{1}{3}{2}    \linea  \gnl
    \gcmu               \linea  \gnl
    \gmu                \linea  \gnl
    \gcn{2}{1}{2}{1}    \linea  \gnl
    \gend
    -
    \gbeg{3}{6}
    \got{4}{a}                  \gnl
    \gvac{1}            \gcmu   \gnl
    \gvac{1}            \cruce  \gnl
    \gcn{2}{1}{3}{2}    \linea  \gnl
    \gcmu               \linea  \gnl
    \gmu                \linea  \gnl
    \gend
    \equiv
    2\left(
    \gbeg{2}{2}
    \got{2}{a}  \gnl
    \gcmu       \gnl
    \gend
    -
    \gbeg{2}{3}
    \got{2}{a}  \gnl
    \gcmu       \gnl
    \cruce      \gnl
    \gend
    \right)\quad(\modulus h^{n+1})
\end{displaymath}
Since $\Delta_i = \Delta^{\op}_i$ $i = 0,\dots, n-1$ we finally get
\begin{displaymath}
  \gbeg{3}{6}
    \got{4}{a}                  \gnl
    \gvac{1}            \gcmu   \gnl
    \gcn{2}{1}{3}{2}    \linea  \gnl
    \etiqueta{0}\gcmu   \linea  \gnl
    \etiqueta{0}\gmu    \linea  \gnl
    \gcn{2}{1}{2}{1}    \linea  \gnl
    \gend
    -
    \gbeg{3}{6}
    \got{4}{a}                  \gnl
    \gvac{1}            \gcmu   \gnl
    \gvac{1}            \cruce  \gnl
    \gcn{2}{1}{3}{2}    \linea  \gnl
    \etiqueta{0}\gcmu   \linea  \gnl
    \etiqueta{0}\gmu    \linea  \gnl
    \gend
    \equiv
    2\left(
    \gbeg{2}{2}
    \got{2}{a}  \gnl
    \gcmu       \gnl
    \gend
    -
    \gbeg{2}{3}
    \got{2}{a}  \gnl
    \gcmu       \gnl
    \cruce      \gnl
    \gend
    \right)\quad(\modulus h^{n+1})
\end{displaymath}
This shows that $(\Delta_n - \Delta^{\op}_n)(a)$ is an eigenvector of $Q$ of eigenvalue $2$ so
\begin{displaymath}
(\Delta_n - \Delta^{\op}_n)(a) \in \m \wedge \m
\end{displaymath}
for any $a \in \m$. The result follows from \cite{MP}. \qed
\end{proof}

\section{Coassociativity}
Let $U_h(\m)$  be a bialgebra deformation of $U(\m)$ satisfying the left and right co-Moufang identities. The coassociator $C_h := (\Delta_h \tilde{\otimes} \Id) \Delta_h - (\Id \tilde{\otimes} \Delta_h)\Delta_h$ of $U_h(\m)$ measures the lack of coassociativity. It can be developed into a power series
\begin{displaymath}
  C_h\vert_{U(\m)} = hC_1 + h^2C_2 + \cdots.
\end{displaymath}
The constant term vanishes due to the coassociativity of $U(\m)$. We will prove that $C_n = 0$ using induction on $n$.
\begin{lemma}\label{lem:P1}
  Assume that $C_1 = \cdots = C_{n-1} = 0$. Then for any $a \in \m$
  \begin{displaymath}
    \gbeg{4}{7}
    \got{6}{a}           \gnl
    \gvac{2}            \gcmu                   \gnl
    \gcn{3}{1}{5}{4}    \gcl{1}                 \gnl
    \gvac{1}            \gcmu       \gcl{1}     \gnl
    \gcn{2}{1}{3}{2}    \gcl{1}     \gcl{1}     \gnl
    \gcmu               \gcl{1}     \gcl{1}     \gnl
    \gmu                \gcl{1}     \gcl{1}     \gnl
    \gend
    -
    \gbeg{4}{7}
    \got{4}{a}                                                  \gnl
    \gvac{1}            \gcmu                                   \gnl
    \gcn{1}{1}{3}{2}    \gcn{1}{1}{3}{4}                        \gnl
    \gcmu               \gcmu                                   \gnl
    \gmu                \linea              \linea              \gnl
    \gcn{2}{1}{2}{1}    \linea              \linea              \gnl
    \linea              \gvac{1}            \linea      \linea  \gnl
    \gend
    \equiv
    \gbeg{3}{5}
    \got{4}{a}                          \gnl
    \gvac{1}            \gcmu           \gnl
    \gcn{2}{1}{3}{2}    \linea          \gnl
    \gcmu               \linea          \gnl
    \linea              \linea   \linea \gnl
    \gend
    -
    \gbeg{4}{5}
    \got{3}{a}                  \gnl
    \gvac{1}            \gcmu   \gnl
    \gcn{2}{1}{3}{2}    \linea  \gnl
    \gcmu               \linea  \gnl
    \gbr                \linea \gnl
    \gend
    +
    \gbeg{3}{5}
    \got{2}{a}                  \gnl
    \gcmu                       \gnl
    \linea  \gcn{1}{1}{1}{2}    \gnl
    \linea  \gcmu               \gnl
    \gbr    \linea              \gnl
    \gend
    -
    \gbeg{3}{5}
    \got{2}{a}                  \gnl
    \gcmu                       \gnl
    \linea  \gcn{1}{1}{1}{2}    \gnl
    \linea  \gcmu               \gnl
    \linea  \linea      \linea  \gnl
    \gend
    \quad (\modulus h^{n+1})
\end{displaymath}
\end{lemma}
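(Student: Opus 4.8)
The first move I would make is to translate the two diagrams on each side back into maps, since once that is done the statement becomes transparent. Writing $Q_h:=p_h\circ\Delta_h$ for the operator $x\mapsto\sum x_{[1]}\bullet x_{[2]}$, and $\Delta^{(2)}_L:=(\Delta_h\tilde{\otimes}\Id)\Delta_h$, $\Delta^{(2)}_R:=(\Id\tilde{\otimes}\Delta_h)\Delta_h$, the two diagrams on the left are exactly $(Q_h\tilde{\otimes}\Id\tilde{\otimes}\Id)\Delta^{(2)}_L$ and $(Q_h\tilde{\otimes}\Id\tilde{\otimes}\Id)\Delta^{(2)}_R$: in each one builds a double comultiplication, left-associated resp. right-associated, and then applies $Q_h$ to the first leg. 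Hence the left-hand side is nothing but $(Q_h\tilde{\otimes}\Id\tilde{\otimes}\Id)C_h$ evaluated at $a$. Dually, the four diagrams on the right are $\Delta^{(2)}_L$, $\tau\Delta^{(2)}_L$, $\tau\Delta^{(2)}_R$, $\Delta^{(2)}_R$, where $\tau$ flips the first two legs, so the right-hand side is $(\Id-\tau)(\Delta^{(2)}_L-\Delta^{(2)}_R)=(\Id-\tau)C_h$ at $a$. Thus I would restate the lemma as the single clean congruence $(Q_h\tilde{\otimes}\Id\tilde{\otimes}\Id)C_h(a)\equiv(\Id-\tau)C_h(a)\pmod{h^{n+1}}$.

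Next I would peel off the induction. Since $C_1=\dots=C_{n-1}=0$ we have $C_h\equiv h^nC_n\pmod{h^{n+1}}$ and $Q_h\equiv Q\pmod h$, so the target collapses to the purely classical identity $(Q\otimes\Id\otimes\Id)C_n(a)=(\Id-\tau)C_n(a)$ in $U(\m)^{\otimes3}$, with $Q$ the undeformed map whose eigenvalues on $a^k$ are $2^k$. The inductive hypothesis also means that modulo $h^{n+1}$ the coalgebra is coassociative, so any single reassociation of a comultiplication costs exactly one coassociator (two reassociations cost $O(h^{2n})\subseteq O(h^{n+1})$ and disappear); for instance the left-combed four-fold comultiplication $\Psi$ and the balanced one $T$ satisfy $\Psi-T=(\Delta_h\otimes\Id\otimes\Id)C_h$. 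This is the only channel through which non-coassociativity enters, and it is what will ultimately deposit the coassociator on the right.

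The engine is the left co-Moufang identity in the shape of part~(5) of Lemma~\ref{lem:basic}, which is exact and which I would read as $(p_h\tilde{\otimes}\Id\tilde{\otimes}\Id)c_{23}\Psi=(p_h\tilde{\otimes}\Id\tilde{\otimes}\Id)c_{23}\Phi$, with $\Psi,\Phi$ the left- and right-combed four-fold comultiplications and $c_{23}$ the flip of the middle two legs. The first left diagram is the same expression $(p_h\tilde{\otimes}\Id\tilde{\otimes}\Id)\Psi$ but \emph{without} the $c_{23}$ braid, so I would write it as the co-Moufang side plus the term $(p_h\tilde{\otimes}\Id\tilde{\otimes}\Id)(\Id-c_{23})\Psi$ and then substitute co-Moufang. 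At this point primitivity of $a$ becomes decisive: because $\Delta_h(1)=1\tilde{\otimes}1$, every iterated comultiplication of the primitive $a$ collapses to $\delta(a):=\Delta_h(a)-\Delta_0(a)$ placed in a single leg with units elsewhere, which degenerates the many summands of $\Psi(a)$, $\Phi(a)$ and of $T(a)$; the parallel treatment of the second left diagram (the balanced tree, handled through part~(6) of Lemma~\ref{lem:basic} where needed) supplies the complementary half, while the braided products $x_{[1]}\bullet x_{[2][2]}$ of parts~(3)--(4) are what reroute the internal $Q$ into the flip $\tau$ on the first two legs.

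The step I expect to fight with is precisely this conversion. Co-Moufang naturally produces \emph{cross}-multiplications $x_{[1]}\bullet(\text{another leg})$ together with the middle flip $c_{23}$, whereas the statement demands the \emph{internal} operator $Q_h=p_h\circ\Delta_h$ on the first leg and the flip $\tau=c_{12}$ on the first two legs; matching these while keeping only the single-coassociator corrections that survive modulo $h^{n+1}$ is delicate because $Q$ is not the identity but the semisimple operator with eigenvalues $\{2^k\}$, so every correction must be tracked with its exact coefficient and not merely up to sign. The facts that make the bookkeeping close are primitivity of $a$, the counit relations $(\epsilon\otimes\Id\otimes\Id)C_h=0$ together with their two cyclic variants, and the rewriting rules in parts~(1)--(4) of Lemma~\ref{lem:basic}. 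Once the congruence is secured for $a\in\m$, Lemma~\ref{lem:induction} is what will later propagate it from primitive elements to all of $U(\m)$.
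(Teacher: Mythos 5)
Your opening reformulation is correct and genuinely clarifying: both sides of the lemma are exact composites of $\Delta_h$ and $p_h$, the left-hand side is $(Q_h\tilde{\otimes}\Id\tilde{\otimes}\Id)C_h(a)$ with $Q_h=p_h\Delta_h$, the right-hand side is $(\Id-\tau)C_h(a)$ with $\tau$ the flip of the first two legs, and under $C_1=\cdots=C_{n-1}=0$ the claim is equivalent to the single identity $(Q\otimes\Id\otimes\Id)C_n(a)=(\Id-\tau)C_n(a)$ for $a\in\m$. The toolkit you name --- parts (5) and (6) of Lemma~\ref{lem:basic} as the carriers of the co-Moufang identities, the splitting $\Delta_h=\Delta_0+(\Delta_h-\Delta_0)$, the fact that reassociating comultiplications costs one coassociator and is therefore free modulo $h^{n+1}$ wherever a factor of $h$ is already present, cocommutativity of $\Delta_0$, and primitivity at the end --- is exactly the toolkit the paper uses.

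The proof itself, however, is missing. The entire content of the lemma is the conversion you describe and explicitly defer in your last paragraph: turning the cross-multiplications with the middle flip $c_{23}$ that co-Moufang produces into the internal operator $Q$ on the first leg and the flip $\tau=c_{12}$, with every order-$h^n$ correction tracked. In the paper this is a chain of roughly eight successive diagram identities, each invoking Lemma~\ref{lem:basic}, the vanishing of $C_1,\dots,C_{n-1}$, or $\Delta_0=\Delta_0^{\op}$ at a specific spot, and the four right-hand terms emerge with their precise signs and coefficients only at the very end; those coefficients are then fed into the eigenvalue argument of Lemma~\ref{lem:skew}, so they cannot be left to a ``delicate matching'' that is not performed. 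Naming the ingredients does not establish the congruence. Two smaller inaccuracies: the right-hand side of part (5) of Lemma~\ref{lem:basic} is the caterpillar $(\Id\tilde{\otimes}\Delta_h\tilde{\otimes}\Id)(\Delta_h\tilde{\otimes}\Id)\Delta_h$ followed by the flip and product, not the left comb; and $a$ is primitive only for $\Delta_0$, not for $\Delta_h$, so iterated comultiplications of $a$ do not simply collapse to a single deformed leg --- isolating an outermost $\Delta_0$ acting on $a$ is precisely what the paper's manipulations are arranged to achieve before primitivity can be used.
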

\begin{proof}
  By Lemma~\ref{lem:basic},
  \begin{displaymath}
    D :=
    \gbeg{4}{6}
    \gvac{2}    \gcmu       \gnl
    \gcn{3}{1}{5}{4}        \gcl{1}     \gnl
    \gvac{1}    \gcmu       \gcl{1}     \gnl
    \gcn{2}{1}{3}{2}        \gcl{1}     \gcl{1} \gnl
    \gcmu       \gcl{1}     \gcl{1}     \gnl
    \gmu        \gcl{1}     \gcl{1}     \gnl
    \gend
    -
    \gbeg{4}{6}
    \gvac{1}\gcmu \gnl
    \gcn{1}{1}{3}{2} \gcn{1}{1}{3}{4} \gnl
    \gcmu   \gcmu   \gnl
    \gmu \linea    \linea   \gnl
    \gcn{2}{1}{2}{1} \linea  \linea \gnl
    \linea  \gvac{1}    \linea  \linea  \gnl
    \gend
    =
    \gbeg{4}{6}
    \gvac{1}\gwcm{3}       \gnl
    \gcn{3}{1}{3}{2}   \linea \gnl
    \gcmu       \gvac{1}\linea \gnl
    \linea      \gcn{2}{1}{1}{2}        \linea\gnl
    \linea      \gcmu                   \linea \gnl
    \gmu        \linea                  \linea\gnl
    \gend
    -
    \gbeg{4}{6}
    \gvac{1}\gcmu \gnl
    \gcn{1}{1}{3}{2} \gcn{1}{1}{3}{4} \gnl
    \gcmu   \gcmu   \gnl
    \gmu \linea    \linea   \gnl
    \gcn{2}{1}{2}{1} \linea  \linea \gnl
    \linea  \gvac{1}    \linea  \linea  \gnl
    \gend
    =
    \gbeg{4}{6}
    \gvac{1}\gwcm{3}       \gnl
    \gcn{3}{1}{3}{2}   \linea \gnl
    \gcmu       \gvac{1}\linea \gnl
    \linea      \gcn{2}{1}{1}{2}        \linea\gnl
    \linea      \gcmu                   \linea \gnl
    \gmu        \linea                  \linea\gnl
    \gend
    -
    \gbeg{4}{6}
    \gcmu                                   \gnl
    \linea              \gcn{1}{1}{1}{2}    \gnl
    \linea              \gcmu               \gnl
    \gmu                \gcn{1}{1}{1}{2}    \gnl
    \gcn{2}{1}{2}{1}    \gcmu               \gnl
    \linea              \gvac{1}            \linea  \linea  \gnl
    \gend
  \end{displaymath}
  Decomposing the first summand and using $\Delta_0 = \Delta^{\op}_0$ we get
  \begin{eqnarray*}
  D &=& \gbeg{4}{6}
    \gvac{1}            \gwcm{3}            \gnl
    \gcn{3}{1}{3}{2}    \linea              \gnl
    \gcmu               \gvac{1}            \linea  \gnl
    \linea              \gcn{2}{1}{1}{2}    \linea  \gnl
    \linea              \etiqueta{+}\gcmu   \linea  \gnl
    \gmu                \linea              \linea  \gnl
    \gend
    +
    \gbeg{4}{6}
    \gvac{1}            \gwcm{3}                    \gnl
    \gcn{3}{1}{3}{2}    \linea                      \gnl
    \gcmu               \gvac{1}            \linea  \gnl
    \linea              \gcn{2}{1}{1}{2}    \linea  \gnl
    \linea              \etiqueta{0}\gcmu   \linea  \gnl
    \gmu                \linea              \linea  \gnl
    \gend
    -
    \gbeg{4}{6}
    \gcmu               \gnl
    \linea              \gcn{1}{1}{1}{2}                    \gnl
    \linea              \gcmu                               \gnl
    \gmu                \gcn{1}{1}{1}{2}                    \gnl
    \gcn{2}{1}{2}{1}    \gcmu                               \gnl
    \linea              \gvac{1}            \linea  \linea  \gnl
    \gend
    =
    \gbeg{4}{7}
    \gvac{1}            \gwcm{3}                    \gnl
    \gcn{3}{1}{3}{2}    \linea                      \gnl
    \gcmu               \gvac{1}            \linea  \gnl
    \linea              \gcn{2}{1}{1}{2}    \linea  \gnl
    \linea              \etiqueta{+}\gcmu   \linea  \gnl
    \gmu                \linea              \linea  \gnl
    \gcn{2}{1}{2}{1}    \linea              \linea  \gnl
    \gend
    +
    \gbeg{4}{7}
    \gvac{1}            \gwcm{3}                            \gnl
    \gcn{3}{1}{3}{2}    \linea                              \gnl
    \gcmu               \gvac{1}            \linea          \gnl
    \linea              \gcn{2}{1}{1}{2}    \linea          \gnl
    \linea              \etiqueta{0}        \gcmu   \linea  \gnl
    \linea              \gbr                \linea          \gnl
    \gmu                \linea              \linea          \gnl
    \gend
    -
    \gbeg{4}{7}
    \gcmu                                                   \gnl
    \linea              \gcn{1}{1}{1}{2}                    \gnl
    \linea              \gcmu                               \gnl
    \gmu                \gcn{1}{1}{1}{2}                    \gnl
    \gcn{2}{1}{2}{1}    \gcmu                               \gnl
    \linea              \gvac{1}            \linea  \linea  \gnl
    \linea              \gvac{1}            \linea  \linea  \gnl
    \gend
    \\
  &=&
  \gbeg{4}{7}
    \gvac{1}            \gwcm{3}                    \gnl
    \gcn{3}{1}{3}{2}    \linea                      \gnl
    \gcmu               \gvac{1}            \linea  \gnl
    \linea              \gcn{2}{1}{1}{2}    \linea  \gnl
    \linea              \etiqueta{+}\gcmu   \linea  \gnl
    \gmu                \linea              \linea  \gnl
    \gcn{2}{1}{2}{1}    \linea              \linea  \gnl
    \gend
    -
    \gbeg{4}{7}
    \gvac{1}            \gwcm{3}                    \gnl
    \gcn{3}{1}{3}{2}    \linea                      \gnl
    \gcmu               \gvac{1}            \linea  \gnl
    \linea              \gcn{2}{1}{1}{2}    \linea  \gnl
    \linea              \etiqueta{+}\gcmu   \linea  \gnl
    \linea              \gbr                \linea  \gnl
    \gmu                \linea              \linea  \gnl
    \gend
    +
    \gbeg{4}{7}
    \gvac{1}            \gwcm{3}                    \gnl
    \gcn{3}{1}{3}{2}    \linea                      \gnl
    \gcmu               \gvac{1}            \linea  \gnl
    \linea              \gcn{2}{1}{1}{2}    \linea  \gnl
    \linea              \gcmu               \linea  \gnl
    \linea              \gbr                \linea  \gnl
    \gmu                \linea              \linea  \gnl
    \gend
    -
    \gbeg{4}{7}
    \gcmu                                           \gnl
    \linea              \gcn{1}{1}{1}{2}            \gnl
    \linea              \gcmu                       \gnl
    \gmu                \gcn{1}{1}{1}{2}            \gnl
    \gcn{2}{1}{2}{1}    \gcmu                       \gnl
    \linea              \gvac{1}    \linea  \linea  \gnl
    \linea              \gvac{1}    \linea  \linea  \gnl
    \gend
  \end{eqnarray*}
  Lemma~\ref{lem:basic} implies that
  \begin{eqnarray*}
    D & = &
    \gbeg{4}{7}
    \gvac{1}            \gwcm{3}                    \gnl
    \gcn{3}{1}{3}{2}    \linea                      \gnl
    \gcmu               \gvac{1}            \linea  \gnl
    \linea              \gcn{2}{1}{1}{2}    \linea  \gnl
    \linea              \etiqueta{+}\gcmu   \linea  \gnl
    \gmu                \linea              \linea  \gnl
    \gcn{2}{1}{2}{1}    \linea              \linea  \gnl
    \gend
    -
    \gbeg{4}{7}
    \gvac{1}            \gwcm{3}                    \gnl
    \gcn{3}{1}{3}{2}    \linea                      \gnl
    \gcmu               \gvac{1}            \linea  \gnl
    \linea              \gcn{2}{1}{1}{2}    \linea  \gnl
    \linea              \etiqueta{+}\gcmu   \linea  \gnl
    \linea              \gbr                \linea  \gnl
    \gmu                \linea              \linea  \gnl
    \gend
    +
    \gbeg{4}{7}
    \gcmu \gnl
    \gcl{1} \gcn{1}{1}{1}{2}        \gnl
    \gcl{1} \gcmu                   \gnl
    \gcl{1} \gcl{1} \gcn{1}{1}{1}{2}\gnl
    \gcl{1} \gcl{1} \gcmu           \gnl
    \linea  \cruce  \linea          \gnl
    \gmu    \linea  \linea          \gnl
    \gend
    -
    \gbeg{4}{7}
    \gcmu                                           \gnl
    \linea              \gcn{1}{1}{1}{2}            \gnl
    \linea              \gcmu                       \gnl
    \gmu                \gcn{1}{1}{1}{2}            \gnl
    \gcn{2}{1}{2}{1}    \gcmu                       \gnl
    \linea              \gvac{1}    \linea  \linea  \gnl
    \linea              \gvac{1}    \linea  \linea  \gnl
    \gend
    \\
    &=&
    \gbeg{4}{7}
    \gvac{1}            \gwcm{3}                    \gnl
    \gcn{3}{1}{3}{2}    \linea                      \gnl
    \gcmu               \gvac{1}            \linea  \gnl
    \linea              \gcn{2}{1}{1}{2}    \linea  \gnl
    \linea              \etiqueta{+}\gcmu   \linea  \gnl
    \gmu                \linea              \linea  \gnl
    \gcn{2}{1}{2}{1}    \linea              \linea  \gnl
    \gend
    -
    \gbeg{4}{7}
    \gvac{1}\gwcm{3}                                \gnl
    \gcn{3}{1}{3}{2}    \linea                      \gnl
    \gcmu               \gvac{1}            \linea \gnl
    \linea              \gcn{2}{1}{1}{2}    \linea  \gnl
    \linea              \etiqueta{+}\gcmu   \linea  \gnl
    \linea              \gbr                \linea  \gnl
    \gmu                \linea              \linea  \gnl
    \gend
    +
    \gbeg{4}{7}
    \etiqueta{0}\gcmu \gnl
    \gcl{1} \gcn{1}{1}{1}{2}        \gnl
    \gcl{1} \gcmu                   \gnl
    \gcl{1} \gcl{1} \gcn{1}{1}{1}{2}\gnl
    \gcl{1} \gcl{1} \gcmu           \gnl
    \linea  \cruce  \linea          \gnl
    \gmu    \linea  \linea          \gnl
    \gend
    +
    \gbeg{4}{7}
    \etiqueta{+}\gcmu \gnl
    \gcl{1} \gcn{1}{1}{1}{2}        \gnl
    \gcl{1} \gcmu                   \gnl
    \gcl{1} \gcl{1} \gcn{1}{1}{1}{2}\gnl
    \gcl{1} \gcl{1} \gcmu           \gnl
    \linea  \cruce  \linea          \gnl
    \gmu    \linea  \linea          \gnl
    \gend
    -
    \gbeg{4}{7}
    \gcmu \gnl
    \linea  \gcn{1}{1}{1}{2}            \gnl
    \linea  \gcmu                       \gnl
    \gmu    \gcn{1}{1}{1}{2}            \gnl
    \gcn{2}{1}{2}{1}    \gcmu           \gnl
    \linea  \gvac{1}    \linea  \linea  \gnl
    \linea  \gvac{1}    \linea  \linea  \gnl
    \gend
  \end{eqnarray*}
  Since $C_1 = \cdots = C_{n-1} = 0$ then, modulo $h^{n+1}$,
  \begin{eqnarray*}
    D &\equiv&
    \gbeg{4}{7}
    \gvac{1}            \gwcm{3}                    \gnl
    \gcn{3}{1}{3}{2}    \linea                      \gnl
    \gcmu               \gvac{1}            \linea  \gnl
    \linea              \gcn{2}{1}{1}{2}    \linea  \gnl
    \linea              \etiqueta{+}\gcmu   \linea  \gnl
    \gmu                \linea              \linea  \gnl
    \gcn{2}{1}{2}{1}    \linea              \linea  \gnl
    \gend
    -
    \gbeg{4}{7}
    \gvac{1}            \gwcm{3}                    \gnl
    \gcn{3}{1}{3}{2}    \linea                      \gnl
    \gcmu               \gvac{1}            \linea  \gnl
    \linea              \gcn{2}{1}{1}{2}    \linea  \gnl
    \linea              \etiqueta{+}\gcmu   \linea  \gnl
    \linea              \gbr                \linea  \gnl
    \gmu                \linea              \linea  \gnl
    \gend
    +
    \gbeg{4}{7}
    \etiqueta{0}\gcmu               \gnl
    \gcl{1} \gcn{1}{1}{1}{2}        \gnl
    \gcl{1} \gcmu                   \gnl
    \gcl{1} \gcl{1} \gcn{1}{1}{1}{2}\gnl
    \gcl{1} \gcl{1} \gcmu           \gnl
    \linea  \cruce  \linea          \gnl
    \gmu    \linea  \linea          \gnl
    \gend
    +
    \gbeg{4}{7}
    \etiqueta{+}\gwcm{3}                                            \gnl
    \linea                  \gvac{1}            \gcn{1}{1}{1}{2}    \gnl
    \linea                  \gvac{1}            \gcmu               \gnl
    \linea                  \gcn{2}{1}{3}{2}    \linea              \gnl
    \linea                  \gcmu               \linea              \gnl
    \linea                  \cruce              \linea              \gnl
    \gmu                    \linea              \linea              \gnl
    \gend
    -
    \gbeg{4}{7}
    \gcmu                                                   \gnl
    \linea              \gcn{1}{1}{1}{2}                    \gnl
    \linea              \gcmu                               \gnl
    \gmu                \gcn{1}{1}{1}{2}                    \gnl
    \gcn{2}{1}{2}{1}    \gcmu                               \gnl
    \linea              \gvac{1}            \linea  \linea  \gnl
    \linea              \gvac{1}            \linea  \linea  \gnl
    \gend \\
    &=&
    \gbeg{4}{7}
    \gvac{1}            \gwcm{3}                        \gnl
    \gcn{3}{1}{3}{2}    \linea                          \gnl
    \gcmu               \gvac{1}            \linea      \gnl
    \linea              \gcn{2}{1}{1}{2}    \linea      \gnl
    \linea              \etiqueta{+}\gcmu   \linea      \gnl
    \gmu                \linea              \linea      \gnl
    \gcn{2}{1}{2}{1}    \linea              \linea      \gnl
    \gend
    -
    \gbeg{4}{7}
    \gvac{1}            \gwcm{3}                    \gnl
    \gcn{3}{1}{3}{2}    \linea                      \gnl
    \gcmu               \gvac{1}            \linea  \gnl
    \linea              \gcn{2}{1}{1}{2}    \linea  \gnl
    \linea              \etiqueta{+}\gcmu   \linea  \gnl
    \linea              \gbr                \linea  \gnl
    \gmu                \linea              \linea  \gnl
    \gend
    +
    \gbeg{4}{7}
    \etiqueta{0}\gcmu               \gnl
    \gcl{1} \gcn{1}{1}{1}{2}        \gnl
    \gcl{1} \gcmu                   \gnl
    \gcl{1} \gcl{1} \gcn{1}{1}{1}{2}\gnl
    \gcl{1} \gcl{1} \gcmu           \gnl
    \linea  \cruce  \linea          \gnl
    \gmu    \linea  \linea          \gnl
    \gend
    +
    \gbeg{4}{7}
    \etiqueta{+}\gwcm{3}                                    \gnl
    \linea  \gvac{1}            \gcn{1}{1}{1}{2}            \gnl
    \linea  \gvac{1}            \gcmu                       \gnl
    \linea  \gcn{2}{1}{3}{2}    \linea                      \gnl
    \linea  \etiqueta{+}        \gcmu               \linea  \gnl
    \linea  \cruce              \linea                      \gnl
    \gmu    \linea              \linea                      \gnl
    \gend
    +
    \gbeg{4}{7}
    \etiqueta{+}\gwcm{3}                                            \gnl
    \linea                  \gvac{1}            \gcn{1}{1}{1}{2}    \gnl
    \linea                  \gvac{1}            \gcmu               \gnl
    \linea                  \gcn{2}{1}{3}{2}    \linea              \gnl
    \linea                  \etiqueta{0}\gcmu   \linea              \gnl
    \linea                  \cruce              \linea              \gnl
    \gmu                    \linea              \linea              \gnl
    \gend
    -
    \gbeg{4}{7}
    \gcmu                                                   \gnl
    \linea              \gcn{1}{1}{1}{2}                    \gnl
    \linea              \gcmu                               \gnl
    \gmu                \gcn{1}{1}{1}{2}                    \gnl
    \gcn{2}{1}{2}{1}    \gcmu                               \gnl
    \linea              \gvac{1}            \linea  \linea  \gnl
    \linea              \gvac{1}            \linea  \linea  \gnl
    \gend
  \end{eqnarray*}
  We again use that $C_1 = \cdots = C_{n-1} = 0$ to obtain, modulo $h^{n+1}$,
  \begin{eqnarray*}
    D &\equiv&
    \gbeg{4}{7}
    \gvac{1}            \gwcm{3}                    \gnl
    \gcn{3}{1}{3}{2}    \linea                      \gnl
    \gcmu               \gvac{1}            \linea  \gnl
    \linea              \gcn{2}{1}{1}{2}    \linea  \gnl
    \linea              \etiqueta{+}\gcmu   \linea  \gnl
    \gmu                \linea              \linea  \gnl
    \gcn{2}{1}{2}{1}    \linea              \linea  \gnl
    \gend
    -
    \gbeg{4}{7}
    \gwcm{3}   \gnl
    \linea  \gvac{1}    \gcn{1}{1}{1}{2}    \gnl
    \linea  \gvac{1}    \gcmu   \gnl
    \linea  \gcn{2}{1}{3}{2}    \linea  \gnl
    \linea  \etiqueta{+}\gcmu   \linea  \gnl
    \linea  \cruce  \linea  \gnl
    \gmu  \linea  \linea    \gnl
    \gend
    +
    \gbeg{4}{7}
    \etiqueta{0}\gcmu \gnl
    \gcl{1} \gcn{1}{1}{1}{2}        \gnl
    \gcl{1} \gcmu                   \gnl
    \gcl{1} \gcl{1} \gcn{1}{1}{1}{2}\gnl
    \gcl{1} \gcl{1} \gcmu           \gnl
    \linea  \cruce  \linea          \gnl
    \gmu    \linea  \linea          \gnl
    \gend
    +
    \gbeg{4}{7}
    \etiqueta{+}\gwcm{3}   \gnl
    \linea  \gvac{1}    \gcn{1}{1}{1}{2}    \gnl
    \linea  \gvac{1}    \gcmu   \gnl
    \linea  \gcn{2}{1}{3}{2}    \linea  \gnl
    \linea  \etiqueta{+}\gcmu   \linea  \gnl
    \linea  \cruce  \linea  \gnl
    \gmu  \linea  \linea    \gnl
    \gend
    +
    \gbeg{4}{7}
    \etiqueta{+}\gwcm{3}   \gnl
    \linea  \gvac{1}    \gcn{1}{1}{1}{2}    \gnl
    \linea  \gvac{1}    \gcmu   \gnl
    \linea  \gcn{2}{1}{3}{2}    \linea  \gnl
    \linea  \etiqueta{0}\gcmu   \linea  \gnl
    \linea  \cruce  \linea  \gnl
    \gmu  \linea  \linea    \gnl
    \gend
    -
    \gbeg{4}{7}
    \gcmu \gnl
    \linea  \gcn{1}{1}{1}{2}    \gnl
    \linea  \gcmu   \gnl
    \gmu    \gcn{1}{1}{1}{2}    \gnl
    \gcn{2}{1}{2}{1}    \gcmu   \gnl
    \linea  \gvac{1}    \linea  \linea  \gnl
    \linea  \gvac{1}    \linea  \linea  \gnl
    \gend\\
    &=&
    \gbeg{4}{7}
    \gvac{1}\gwcm{3}       \gnl
    \gcn{3}{1}{3}{2}   \linea \gnl
    \gcmu       \gvac{1}\linea \gnl
    \linea      \gcn{2}{1}{1}{2}        \linea\gnl
    \linea      \etiqueta{+}\gcmu                   \linea \gnl
    \gmu        \linea                  \linea\gnl
    \gcn{2}{1}{2}{1}      \linea                  \linea\gnl
    \gend
    -
    \gbeg{4}{7}
    \etiqueta{0}\gwcm{3}   \gnl
    \linea  \gvac{1}    \gcn{1}{1}{1}{2}    \gnl
    \linea  \gvac{1}    \gcmu   \gnl
    \linea  \gcn{2}{1}{3}{2}    \linea  \gnl
    \linea  \etiqueta{+}\gcmu   \linea  \gnl
    \linea  \cruce  \linea  \gnl
    \gmu  \linea  \linea    \gnl
    \gend
    +
    \gbeg{4}{7}
    \etiqueta{0}\gcmu \gnl
    \gcl{1} \gcn{1}{1}{1}{2}        \gnl
    \gcl{1} \gcmu                   \gnl
    \gcl{1} \gcl{1} \gcn{1}{1}{1}{2}\gnl
    \gcl{1} \gcl{1} \gcmu           \gnl
    \linea  \cruce  \linea          \gnl
    \gmu    \linea  \linea          \gnl
    \gend
    +
    \gbeg{4}{7}
    \etiqueta{+}\gwcm{3}   \gnl
    \linea  \gvac{1}    \gcn{1}{1}{1}{2}    \gnl
    \linea  \gvac{1}    \gcmu   \gnl
    \linea  \gcn{2}{1}{3}{2}    \linea  \gnl
    \linea  \etiqueta{0}\gcmu   \linea  \gnl
    \linea  \cruce  \linea  \gnl
    \gmu  \linea  \linea    \gnl
    \gend
    -
    \gbeg{4}{7}
    \gcmu \gnl
    \linea  \gcn{1}{1}{1}{2}    \gnl
    \linea  \gcmu   \gnl
    \gmu    \gcn{1}{1}{1}{2}    \gnl
    \gcn{2}{1}{2}{1}    \gcmu   \gnl
    \linea  \gvac{1}    \linea  \linea  \gnl
    \linea  \gvac{1}    \linea  \linea  \gnl
    \gend\\
    &=&
    \gbeg{4}{7}
    \gvac{1}\gwcm{3}       \gnl
    \gcn{3}{1}{3}{2}   \linea \gnl
    \gcmu       \gvac{1}\linea \gnl
    \linea      \gcn{2}{1}{1}{2}        \linea\gnl
    \linea      \etiqueta{+}\gcmu                   \linea \gnl
    \gmu        \linea                  \linea\gnl
    \gcn{2}{1}{2}{1}      \linea                  \linea\gnl
    \gend
    -
    \gbeg{4}{7}
    \etiqueta{0}\gwcm{3}   \gnl
    \linea  \gvac{1}    \gcn{1}{1}{1}{2}    \gnl
    \linea  \gvac{1}    \gcmu   \gnl
    \linea  \gcn{2}{1}{3}{2}    \linea  \gnl
    \linea  \etiqueta{+}\gcmu   \linea  \gnl
    \linea  \cruce  \linea  \gnl
    \gmu  \linea  \linea    \gnl
    \gend
    +
    \gbeg{4}{7}
    \etiqueta{0}\gcmu \gnl
    \gcl{1} \gcn{1}{1}{1}{2}        \gnl
    \gcl{1} \gcmu                   \gnl
    \gcl{1} \gcl{1} \gcn{1}{1}{1}{2}\gnl
    \gcl{1} \gcl{1} \gcmu           \gnl
    \linea  \cruce  \linea          \gnl
    \gmu    \linea  \linea          \gnl
    \gend
    +
    \gbeg{4}{7}
    \etiqueta{+}\gwcm{3}   \gnl
    \linea  \gvac{1}    \gcn{1}{1}{1}{2}    \gnl
    \linea  \gvac{1}    \gcmu   \gnl
    \linea  \gcn{2}{1}{3}{2}    \linea  \gnl
    \linea  \etiqueta{0}\gcmu   \linea  \gnl
    \linea  \cruce  \linea  \gnl
    \gmu  \linea  \linea    \gnl
    \gend
    -
    \gbeg{4}{7}
    \etiqueta{+}\gcmu \gnl
    \linea  \gcn{1}{1}{1}{2}    \gnl
    \linea  \gcmu   \gnl
    \gmu    \gcn{1}{1}{1}{2}    \gnl
    \gcn{2}{1}{2}{1}    \gcmu   \gnl
    \linea  \gvac{1}    \linea  \linea  \gnl
    \linea  \gvac{1}    \linea  \linea  \gnl
    \gend
    -
    \gbeg{4}{7}
    \etiqueta{0}\gcmu \gnl
    \linea  \gcn{1}{1}{1}{2}    \gnl
    \linea  \gcmu   \gnl
    \gmu    \gcn{1}{1}{1}{2}    \gnl
    \gcn{2}{1}{2}{1}    \gcmu   \gnl
    \linea  \gvac{1}    \linea  \linea  \gnl
    \linea  \gvac{1}    \linea  \linea  \gnl
    \gend
  \end{eqnarray*}
  We use that $C_1 = \cdots = C_{n-1} = 0$ once more to get modulo $h^{n+1}$
  \begin{eqnarray*}
  D &\equiv&
  \gbeg{4}{7}
    \gvac{1}\gwcm{3}       \gnl
    \gcn{3}{1}{3}{2}   \linea \gnl
    \gcmu       \gvac{1}\linea \gnl
    \linea      \gcn{2}{1}{1}{2}        \linea\gnl
    \linea      \etiqueta{+}\gcmu                   \linea \gnl
    \gmu        \linea                  \linea\gnl
    \gcn{2}{1}{2}{1}      \linea                  \linea\gnl
    \gend
    -
    \gbeg{4}{7}
    \etiqueta{0}\gwcm{3}   \gnl
    \linea  \gvac{1}    \gcn{1}{1}{1}{2}    \gnl
    \linea  \gvac{1}    \gcmu   \gnl
    \linea  \gcn{2}{1}{3}{2}    \linea  \gnl
    \linea  \etiqueta{+}\gcmu   \linea  \gnl
    \linea  \cruce  \linea  \gnl
    \gmu  \linea  \linea    \gnl
    \gend
    +
    \gbeg{4}{7}
    \etiqueta{0}\gcmu \gnl
    \gcl{1} \gcn{1}{1}{1}{2}        \gnl
    \gcl{1} \gcmu                   \gnl
    \gcl{1} \gcl{1} \gcn{1}{1}{1}{2}\gnl
    \gcl{1} \gcl{1} \gcmu           \gnl
    \linea  \cruce  \linea          \gnl
    \gmu    \linea  \linea          \gnl
    \gend
    +
    \gbeg{4}{7}
    \etiqueta{+}\gwcm{3}   \gnl
    \linea  \gvac{1}    \gcn{1}{1}{1}{2}    \gnl
    \linea  \gvac{1}    \gcmu   \gnl
    \linea  \gcn{2}{1}{3}{2}    \linea  \gnl
    \linea  \etiqueta{0}\gcmu   \linea  \gnl
    \linea  \cruce  \linea  \gnl
    \gmu  \linea  \linea    \gnl
    \gend
    -
    \gbeg{4}{7}
    \etiqueta{+}\gwcm{3}   \gnl
    \linea  \gvac{1}    \gcn{1}{1}{1}{2}    \gnl
    \linea  \gvac{1}    \gcmu   \gnl
    \linea  \gcn{2}{1}{3}{2}    \linea  \gnl
    \linea  \gcmu   \linea  \gnl
    \linea  \linea  \linea  \linea \gnl
    \gmu  \linea  \linea    \gnl
    \gend
    -
    \gbeg{4}{7}
    \etiqueta{0}\gcmu \gnl
    \linea  \gcn{1}{1}{1}{2}    \gnl
    \linea  \gcmu   \gnl
    \gmu    \gcn{1}{1}{1}{2}    \gnl
    \gcn{2}{1}{2}{1}    \gcmu   \gnl
    \linea  \gvac{1}    \linea  \linea  \gnl
    \linea  \gvac{1}    \linea  \linea  \gnl
    \gend
    \\
    &\equiv&
    \gbeg{4}{7}
    \gwcm{3}   \gnl
    \linea  \gvac{1}    \gcn{1}{1}{1}{2}    \gnl
    \linea  \gvac{1}    \gcmu   \gnl
    \linea  \gcn{2}{1}{3}{2}    \linea  \gnl
    \linea  \etiqueta{+}\gcmu   \linea  \gnl
    \linea  \linea  \linea  \linea \gnl
    \gmu  \linea  \linea    \gnl
    \gend
    -
    \gbeg{4}{7}
    \etiqueta{0}\gwcm{3}   \gnl
    \linea  \gvac{1}    \gcn{1}{1}{1}{2}    \gnl
    \linea  \gvac{1}    \gcmu   \gnl
    \linea  \gcn{2}{1}{3}{2}    \linea  \gnl
    \linea  \etiqueta{+}\gcmu   \linea  \gnl
    \linea  \cruce  \linea  \gnl
    \gmu  \linea  \linea    \gnl
    \gend
    +
    \gbeg{4}{7}
    \etiqueta{0}\gcmu \gnl
    \gcl{1} \gcn{1}{1}{1}{2}        \gnl
    \gcl{1} \gcmu                   \gnl
    \gcl{1} \gcl{1} \gcn{1}{1}{1}{2}\gnl
    \gcl{1} \gcl{1} \gcmu           \gnl
    \linea  \cruce  \linea          \gnl
    \gmu    \linea  \linea          \gnl
    \gend
    +
    \gbeg{4}{7}
    \etiqueta{+}\gwcm{3}   \gnl
    \linea  \gvac{1}    \gcn{1}{1}{1}{2}    \gnl
    \linea  \gvac{1}    \gcmu   \gnl
    \linea  \gcn{2}{1}{3}{2}    \linea  \gnl
    \linea  \etiqueta{0}\gcmu   \linea  \gnl
    \linea  \cruce  \linea  \gnl
    \gmu  \linea  \linea    \gnl
    \gend
    -
    \gbeg{4}{7}
    \etiqueta{+}\gwcm{3}   \gnl
    \linea  \gvac{1}    \gcn{1}{1}{1}{2}    \gnl
    \linea  \gvac{1}    \gcmu   \gnl
    \linea  \gcn{2}{1}{3}{2}    \linea  \gnl
    \linea  \gcmu   \linea  \gnl
    \linea  \linea  \linea  \linea \gnl
    \gmu  \linea  \linea    \gnl
    \gend
    -
    \gbeg{4}{7}
    \etiqueta{0}\gcmu \gnl
    \linea  \gcn{1}{1}{1}{2}    \gnl
    \linea  \gcmu   \gnl
    \gmu    \gcn{1}{1}{1}{2}    \gnl
    \gcn{2}{1}{2}{1}    \gcmu   \gnl
    \linea  \gvac{1}    \linea  \linea  \gnl
    \linea  \gvac{1}    \linea  \linea  \gnl
    \gend
    \\
    &\equiv&
    \gbeg{4}{7}
    \etiqueta{0}\gwcm{3}   \gnl
    \linea  \gvac{1}    \gcn{1}{1}{1}{2}    \gnl
    \linea  \gvac{1}    \gcmu   \gnl
    \linea  \gcn{2}{1}{3}{2}    \linea  \gnl
    \linea  \etiqueta{+}\gcmu   \linea  \gnl
    \linea  \linea  \linea  \linea \gnl
    \gmu  \linea  \linea    \gnl
    \gend
    -
    \gbeg{4}{7}
    \etiqueta{0}\gwcm{3}   \gnl
    \linea  \gvac{1}    \gcn{1}{1}{1}{2}    \gnl
    \linea  \gvac{1}    \gcmu   \gnl
    \linea  \gcn{2}{1}{3}{2}    \linea  \gnl
    \linea  \etiqueta{+}\gcmu   \linea  \gnl
    \linea  \cruce  \linea  \gnl
    \gmu  \linea  \linea    \gnl
    \gend
    +
    \gbeg{4}{7}
    \etiqueta{0}\gcmu \gnl
    \gcl{1} \gcn{1}{1}{1}{2}        \gnl
    \gcl{1} \gcmu                   \gnl
    \gcl{1} \gcl{1} \gcn{1}{1}{1}{2}\gnl
    \gcl{1} \gcl{1} \gcmu           \gnl
    \linea  \cruce  \linea          \gnl
    \gmu    \linea  \linea          \gnl
    \gend
    -
    \gbeg{4}{7}
    \etiqueta{0}\gcmu \gnl
    \linea  \gcn{1}{1}{1}{2}    \gnl
    \linea  \gcmu   \gnl
    \gmu    \gcn{1}{1}{1}{2}    \gnl
    \gcn{2}{1}{2}{1}    \gcmu   \gnl
    \linea  \gvac{1}    \linea  \linea  \gnl
    \linea  \gvac{1}    \linea  \linea  \gnl
    \gend
  \end{eqnarray*}
  Since $\Delta_0 = \Delta^{\op}_0$ we finally obtain
  \begin{eqnarray*}
    D
    &\equiv&
    \gbeg{4}{7}
    \etiqueta{0}\gwcm{3}   \gnl
    \linea  \gvac{1}    \gcn{1}{1}{1}{2}    \gnl
    \linea  \gvac{1}    \gcmu   \gnl
    \linea  \gcn{2}{1}{3}{2}    \linea  \gnl
    \linea  \gcmu   \linea  \gnl
    \linea  \linea  \linea  \linea \gnl
    \gmu  \linea  \linea    \gnl
    \gend
    -
    \gbeg{4}{7}
    \etiqueta{0}\gwcm{3}   \gnl
    \linea  \gvac{1}    \gcn{1}{1}{1}{2}    \gnl
    \linea  \gvac{1}    \gcmu   \gnl
    \linea  \gcn{2}{1}{3}{2}    \linea  \gnl
    \linea  \etiqueta{0}\gcmu   \linea  \gnl
    \linea  \linea  \linea  \linea \gnl
    \gmu  \linea  \linea    \gnl
    \gend
    +
    \gbeg{4}{7}
    \etiqueta{0}\gwcm{3}   \gnl
    \linea  \gvac{1}    \gcn{1}{1}{1}{2}    \gnl
    \linea  \gvac{1}    \gcmu   \gnl
    \linea  \gcn{2}{1}{3}{2}    \linea  \gnl
    \linea  \etiqueta{0}\gcmu   \linea  \gnl
    \linea  \linea  \linea  \linea \gnl
    \gmu  \linea  \linea    \gnl
    \gend
    -
    \gbeg{4}{7}
    \etiqueta{0}\gwcm{3}   \gnl
    \linea  \gvac{1}    \gcn{1}{1}{1}{2}    \gnl
    \linea  \gvac{1}    \gcmu   \gnl
    \linea  \gcn{2}{1}{3}{2}    \linea  \gnl
    \linea  \gcmu   \linea  \gnl
    \linea  \cruce  \linea  \gnl
    \gmu  \linea  \linea    \gnl
    \gend
    +
    \gbeg{4}{7}
    \etiqueta{0}\gcmu \gnl
    \gcl{1} \gcn{1}{1}{1}{2}        \gnl
    \gcl{1} \gcmu                   \gnl
    \gcl{1} \gcl{1} \gcn{1}{1}{1}{2}\gnl
    \gcl{1} \gcl{1} \gcmu           \gnl
    \linea  \cruce  \linea          \gnl
    \gmu    \linea  \linea          \gnl
    \gend
    -
    \gbeg{4}{7}
    \etiqueta{0}\gcmu \gnl
    \linea  \gcn{1}{1}{1}{2}    \gnl
    \linea  \gcmu   \gnl
    \gmu    \gcn{1}{1}{1}{2}    \gnl
    \gcn{2}{1}{2}{1}    \gcmu   \gnl
    \linea  \gvac{1}    \linea  \linea  \gnl
    \linea  \gvac{1}    \linea  \linea  \gnl
    \gend\\
    &=&
    \gbeg{4}{7}
    \etiqueta{0}\gwcm{3}   \gnl
    \linea  \gvac{1}    \gcn{1}{1}{1}{2}    \gnl
    \linea  \gvac{1}    \gcmu   \gnl
    \linea  \gcn{2}{1}{3}{2}    \linea  \gnl
    \linea  \gcmu   \linea  \gnl
    \linea  \linea  \linea  \linea \gnl
    \gmu  \linea  \linea    \gnl
    \gend
    -
    \gbeg{4}{7}
    \etiqueta{0}\gwcm{3}   \gnl
    \linea  \gvac{1}    \gcn{1}{1}{1}{2}    \gnl
    \linea  \gvac{1}    \gcmu   \gnl
    \linea  \gcn{2}{1}{3}{2}    \linea  \gnl
    \linea  \gcmu   \linea  \gnl
    \linea  \cruce  \linea  \gnl
    \gmu  \linea  \linea    \gnl
    \gend
    +
    \gbeg{4}{7}
    \etiqueta{0}\gcmu \gnl
    \gcl{1} \gcn{1}{1}{1}{2}        \gnl
    \gcl{1} \gcmu                   \gnl
    \gcl{1} \gcl{1} \gcn{1}{1}{1}{2}\gnl
    \gcl{1} \gcl{1} \gcmu           \gnl
    \linea  \cruce  \linea          \gnl
    \gmu    \linea  \linea          \gnl
    \gend
    -
    \gbeg{4}{7}
    \etiqueta{0}\gcmu \gnl
    \linea  \gcn{1}{1}{1}{2}    \gnl
    \linea  \gcmu   \gnl
    \gmu    \gcn{1}{1}{1}{2}    \gnl
    \gcn{2}{1}{2}{1}    \gcmu   \gnl
    \linea  \gvac{1}    \linea  \linea  \gnl
    \linea  \gvac{1}    \linea  \linea  \gnl
    \gend
  \end{eqnarray*}
  Evaluating at $a \in \m$ and simplifying it follows that
\begin{displaymath}
    \gbeg{4}{7}
    \got{6}{a}           \gnl
    \gvac{2}            \gcmu                   \gnl
    \gcn{3}{1}{5}{4}    \gcl{1}                 \gnl
    \gvac{1}            \gcmu       \gcl{1}     \gnl
    \gcn{2}{1}{3}{2}    \gcl{1}     \gcl{1}     \gnl
    \gcmu               \gcl{1}     \gcl{1}     \gnl
    \gmu                \gcl{1}     \gcl{1}     \gnl
    \gend
    -
    \gbeg{4}{7}
    \got{4}{a}                                                  \gnl
    \gvac{1}            \gcmu                                   \gnl
    \gcn{1}{1}{3}{2}    \gcn{1}{1}{3}{4}                        \gnl
    \gcmu               \gcmu                                   \gnl
    \gmu                \linea              \linea              \gnl
    \gcn{2}{1}{2}{1}    \linea              \linea              \gnl
    \linea              \gvac{1}            \linea      \linea  \gnl
    \gend
    \equiv
    \gbeg{3}{5}
    \got{4}{a}                          \gnl
    \gvac{1}            \gcmu           \gnl
    \gcn{2}{1}{3}{2}    \linea          \gnl
    \gcmu               \linea          \gnl
    \linea              \linea   \linea \gnl
    \gend
    -
    \gbeg{4}{5}
    \got{3}{a}                  \gnl
    \gvac{1}            \gcmu   \gnl
    \gcn{2}{1}{3}{2}    \linea  \gnl
    \gcmu               \linea  \gnl
    \gbr                \linea \gnl
    \gend
    +
    \gbeg{3}{5}
    \got{2}{a}                  \gnl
    \gcmu                       \gnl
    \linea  \gcn{1}{1}{1}{2}    \gnl
    \linea  \gcmu               \gnl
    \gbr    \linea              \gnl
    \gend
    -
    \gbeg{3}{5}
    \got{2}{a}                  \gnl
    \gcmu                       \gnl
    \linea  \gcn{1}{1}{1}{2}    \gnl
    \linea  \gcmu               \gnl
    \linea  \linea      \linea  \gnl
    \gend
    \quad (\modulus h^{n+1})
\end{displaymath}
\qed
\end{proof}

\begin{lemma}\label{lem:skew}
  If $C_1 = \cdots = C_{n-1} = 0$ then for any $a \in \m$ we have
  \begin{displaymath}
    C_n(a) \in \m \wedge \m \otimes U(\m).
  \end{displaymath}
\end{lemma}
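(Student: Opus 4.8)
The plan is to read Lemma~\ref{lem:P1} as a single operator identity and then run an eigenvalue argument for the operator $Q$. First I would identify the diagrams. Writing $Q_h:=p_h\circ\Delta_h$ for the deformed version of $Q$, and letting $\tau$ denote the flip $x\otimes y\otimes z\mapsto y\otimes x\otimes z$ of the first two tensor legs, I claim the right-hand side of Lemma~\ref{lem:P1} is exactly $(\Id-\tau)\,C_h(a)$: its four summands are $(\Delta_h\tilde{\otimes}\Id)\Delta_h(a)$, its flip, the flip of $(\Id\tilde{\otimes}\Delta_h)\Delta_h(a)$, and $(\Id\tilde{\otimes}\Delta_h)\Delta_h(a)$, which combine to $C_h(a)-\tau C_h(a)$. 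Likewise the left-hand side is $(Q_h\tilde{\otimes}\Id\tilde{\otimes}\Id)\,C_h(a)$, since its two summands apply $Q_h$ to the first leg of $(\Delta_h\tilde{\otimes}\Id)\Delta_h(a)$ and of $(\Id\tilde{\otimes}\Delta_h)\Delta_h(a)$ respectively. Thus Lemma~\ref{lem:P1} reads
\[
(Q_h\tilde{\otimes}\Id\tilde{\otimes}\Id)\,C_h(a)\equiv(\Id-\tau)\,C_h(a)\pmod{h^{n+1}}.
\]

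Next I would extract the leading term. Since $C_1=\dots=C_{n-1}=0$ we have $C_h(a)\equiv h^nC_n(a)\pmod{h^{n+1}}$, and since $Q_h\equiv Q\pmod h$ the higher-order parts of $Q_h$ contribute only beyond order $h^n$. Comparing coefficients of $h^n$ gives the undeformed identity in $U(\m)^{\otimes 3}$
\[
(Q\otimes\Id\otimes\Id)\,C_n(a)=(\Id-\tau)\,C_n(a).
\]
I would also record the three counit normalizations of the coassociator: a direct computation with the counit property (\ref{eq:counit}) shows $(\epsilon\otimes\Id\otimes\Id)C_h=(\Id\otimes\epsilon\otimes\Id)C_h=(\Id\otimes\Id\otimes\epsilon)C_h=0$, hence the same holds for each $C_n$. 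In particular the first two legs of $C_n(a)$ lie in $\bar U:=\ker\epsilon=\bigoplus_{m\geq 1}U_m$, where $U_m$ denotes the $Q$-eigenspace for the eigenvalue $2^m$ (so $U_0=k1$ and $U_1=\m$).

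Then comes the eigenvalue bookkeeping. Writing $C_n(a)=\sum_{i,j\geq 1}P_{ij}$ with $P_{ij}\in U_i\otimes U_j\otimes U(\m)$, the operator $Q\otimes\Id\otimes\Id$ scales $P_{ij}$ by $2^i$, while $\tau$ carries the type-$(j,i)$ piece $P_{ji}$ into type $(i,j)$. Matching type-$(i,j)$ components in the displayed identity yields
\[
(2^i-1)\,P_{ij}=-\tau P_{ji}.
\]
Applying $\tau$ to the relation with $(i,j)$ replaced by $(j,i)$ and substituting back gives $\bigl[(2^i-1)(2^j-1)-1\bigr]P_{ij}=0$. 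For $i,j\geq 1$ one has $(2^i-1)(2^j-1)\geq 1$ with equality only at $i=j=1$, so $P_{ij}=0$ unless $i=j=1$; that is, $C_n(a)\in\m\otimes\m\otimes U(\m)$. Finally the relation at $i=j=1$ reads $P_{11}=-\tau P_{11}$, so the first two legs are skew-symmetric and $C_n(a)=P_{11}\in\m\wedge\m\otimes U(\m)$.

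The main obstacle is the first paragraph: correctly reading the three- and four-leg diagrams of Lemma~\ref{lem:P1} as the operators $(Q_h\tilde{\otimes}\Id\tilde{\otimes}\Id)C_h$ and $(\Id-\tau)C_h$, and verifying that the higher-order corrections to $Q_h$ (and to the copies of $\Delta_h$ inside $C_h$) are genuinely invisible at order $h^n$. Once the clean identity $(Q\otimes\Id\otimes\Id)C_n(a)=(\Id-\tau)C_n(a)$ is secured, the elementary arithmetic fact that $(2^i-1)(2^j-1)=1$ forces $i=j=1$ does all the remaining work, and the $i=j=1$ relation delivers the skew-symmetry for free.
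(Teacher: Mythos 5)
Your proof is correct, and it takes a genuinely different (and shorter) route than the paper's. Your reading of the diagrams is accurate: Lemma~\ref{lem:P1} is precisely $(Q_h\tilde{\otimes}\Id\tilde{\otimes}\Id)C_h(a)\equiv(\Id-\tau)C_h(a)\pmod{h^{n+1}}$, and extracting the $h^n$-coefficient is legitimate because $C_h(a)\equiv h^nC_n(a)$. The paper, by contrast, uses only the fact that the right-hand side of Lemma~\ref{lem:P1} is visibly $\tau$-skew-symmetric, and then runs a second round of diagrammatic manipulation (Lemma~\ref{lem:P1} again, combined with that skew-symmetry) to produce the extra identity $(Q\otimes Q\otimes\Id)C_n(a)=(Q\otimes\Id\otimes\Id)C_n(a)+(\Id\otimes Q\otimes\Id)C_n(a)$; the eigenvalue computation $2^{i+j}-2^i-2^j=0$ iff $i=j=1$ then yields $C_n(a)\in\m\otimes\m\otimes U(\m)$ with no counit normalization needed (the cases $i=0$ or $j=0$ give eigenvalue $-1$), and the wedge is obtained afterwards from $(Q\otimes\Id\otimes\Id)C_n(a)=2C_n(a)$ together with the skew-symmetry. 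Your componentwise involution trick --- $(2^i-1)P_{ij}=-\tau P_{ji}$, hence $\bigl[(2^i-1)(2^j-1)-1\bigr]P_{ij}=0$ --- squeezes both the localization to $\m\otimes\m$ and the skew-symmetry out of the single identity; note that this relation already kills $P_{ij}$ when $i=0$ or $j=0$, so your counit normalization, while correct and clean, is not even strictly necessary. Both arguments ultimately rest on the same arithmetic fact; what the paper's longer derivation buys is a template ($C_n(a)$ lies in the kernel of a sum of $Q$-operators on two slots) that it reuses verbatim for the subsequent lemma $C_n(a)\in\m\wedge\m\wedge\m$, whereas your version is the more economical proof of this particular statement.
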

\begin{proof}
  Lemma~\ref{lem:P1} implies that, modulo $h^{n+1}$, the element
  \begin{displaymath}
    \gbeg{4}{7}
    \got{6}{a}           \gnl
    \gvac{2}            \gcmu                   \gnl
    \gcn{3}{1}{5}{4}    \gcl{1}                 \gnl
    \gvac{1}            \gcmu       \gcl{1}     \gnl
    \gcn{2}{1}{3}{2}    \gcl{1}     \gcl{1}     \gnl
    \gcmu               \gcl{1}     \gcl{1}     \gnl
    \gmu                \gcl{1}     \gcl{1}     \gnl
    \gend
    -
    \gbeg{4}{7}
    \got{4}{a}                                                  \gnl
    \gvac{1}            \gcmu                                   \gnl
    \gcn{1}{1}{3}{2}    \gcn{1}{1}{3}{4}                        \gnl
    \gcmu               \gcmu                                   \gnl
    \gmu                \linea              \linea              \gnl
    \gcn{2}{1}{2}{1}    \linea              \linea              \gnl
    \linea              \gvac{1}            \linea      \linea  \gnl
    \gend
  \end{displaymath}
  is skew-symmetric with respect to the first and second slots. Therefore
  \begin{displaymath}
    \gbeg{4}{7}
    \got{6}{a}                                                  \gnl
    \gvac{2}    \gcmu       \gnl
    \gcn{3}{1}{5}{4}        \gcl{1}     \gnl
    \gvac{1}    \gcmu       \gcl{1}     \gnl
    \gcn{2}{1}{3}{2}        \gcl{1}     \gcl{1} \gnl
    \gcmu       \gcl{1}     \gcl{1}     \gnl
    \gmu        \gcl{1}     \gcl{1}     \gnl
    \gend
    -
    \gbeg{4}{7}
    \got{4}{a}                                                  \gnl
    \gvac{1}\gcmu \gnl
    \gcn{1}{1}{3}{2} \gcn{1}{1}{3}{4} \gnl
    \gcmu   \gcmu   \gnl
    \gmu \linea    \linea   \gnl
    \gcn{2}{1}{2}{1} \linea  \linea \gnl
    \linea  \gvac{1}    \linea  \linea  \gnl
    \gend
    \equiv
    -
    \gbeg{4}{8}
    \got{5}{a}                                                  \gnl
    \gvac{1}    \gwcm{3}       \gnl
    \gcn{3}{1}{3}{2}        \gcl{1}     \gnl
    \gcmu       \gvac{1}    \gcl{1}     \gnl
    \gbr \gvac{1} \linea \gnl
    \gcl{1}  \gcn{2}{1}{1}{2}    \gcl{1} \gnl
    \linea \gcmu        \gcl{1}     \gnl
    \linea \gmu        \gcl{1}      \gnl
    \gend
    +
    \gbeg{4}{8}
    \got{2}{a}                                                  \gnl
    \gcmu \gnl
    \linea  \gcn{1}{1}{1}{3}    \gnl
    \linea \gwcm{3} \gnl
    \cruce \gvac{1}    \linea \gnl
    \linea \gcn{2}{1}{1}{2} \linea \gnl
    \linea \gcmu \linea \gnl
    \linea \gmu\linea \gnl
    \gend
  \end{displaymath}
  Using Lemma~\ref{lem:P1} and this equality we get
  \begin{eqnarray*}
    \gbeg{5}{8}
    \got{6}{a}                                                  \gnl
    \gvac{1}\gwcm{4} \gnl
    \gvac{1}\linea \gvac{2} \linea \gnl
     \gwcm{3} \gvac{1} \linea \gnl
    \gcn{1}{1}{1}{2} \gcn{3}{1}{3}{4} \linea \gnl
    \gcmu   \gcmu   \linea \gnl
    \gmu \gmu \linea \gnl
    \gcn{1}{1}{2}{2} \gcn{3}{1}{4}{4} \linea \gnl
    \gend
    -
    \gbeg{5}{8}
    \got{5}{a}\gnl
    \gvac{1} \gwcm{3} \gnl
    \gcn{3}{1}{3}{2} \linea \gnl
    \gcmu \gwcm{3} \gnl
    \gmu \gcn{2}{1}{1}{2} \linea \gnl
    \gcn{1}{1}{2}{2} \gvac{1}    \gcmu \linea \gnl
    \gcn{1}{1}{2}{2} \gvac{1} \gmu \linea \gnl
    \gcn{1}{1}{2}{2} \gcn{3}{1}{4}{4} \linea \gnl
    \gend
    &\equiv&
    \gbeg{4}{8}
    \got{5}{a}\gnl
    \gvac{1}    \gwcm{3}       \gnl
    \gcn{3}{1}{3}{2}        \gcl{1}     \gnl
    \gcmu       \gvac{1}    \gcl{1}     \gnl
    \linea \linea \gvac{1} \linea \gnl
    \gcl{1}  \gcn{2}{1}{1}{2}    \gcl{1} \gnl
    \linea \gcmu        \gcl{1}     \gnl
    \linea \gmu        \gcl{1}      \gnl
    \gend
    -
    \gbeg{4}{8}
    \got{5}{a}\gnl
    \gvac{1}    \gwcm{3}       \gnl
    \gcn{3}{1}{3}{2}        \gcl{1}     \gnl
    \gcmu       \gvac{1}    \gcl{1}     \gnl
    \gbr \gvac{1} \linea \gnl
    \gcl{1}  \gcn{2}{1}{1}{2}    \gcl{1} \gnl
    \linea \gcmu        \gcl{1}     \gnl
    \linea \gmu        \gcl{1}      \gnl
    \gend
    +
    \gbeg{4}{8}
    \got{2}{a}\gnl
    \gcmu \gnl
    \linea  \gcn{1}{1}{1}{3}    \gnl
    \linea \gwcm{3} \gnl
    \cruce \gvac{1}    \linea \gnl
    \linea \gcn{2}{1}{1}{2} \linea \gnl
    \linea \gcmu \linea \gnl
    \linea \gmu\linea \gnl
    \gend
    -
    \gbeg{4}{8}
    \got{2}{a}\gnl
    \gcmu \gnl
    \linea  \gcn{1}{1}{1}{3}    \gnl
    \linea \gwcm{3} \gnl
    \linea \linea \gvac{1}    \linea \gnl
    \linea \gcn{2}{1}{1}{2} \linea \gnl
    \linea \gcmu \linea \gnl
    \linea \gmu\linea \gnl
    \gend
    \\
    &\equiv&
    \gbeg{4}{7}
    \got{5}{a}\gnl
    \gvac{1}    \gwcm{3}       \gnl
    \gcn{3}{1}{3}{2}        \gcl{1}     \gnl
    \gcmu       \gvac{1}    \gcl{1}     \gnl
    \gcl{1}  \gcn{2}{1}{1}{2}    \gcl{1} \gnl
    \linea \gcmu        \gcl{1}     \gnl
    \linea \gmu        \gcl{1}      \gnl
    \gend
    +
    \gbeg{4}{7}
    \got{6}{a}\gnl
    \gvac{2}    \gcmu       \gnl
    \gcn{3}{1}{5}{4}        \gcl{1}     \gnl
    \gvac{1}    \gcmu       \gcl{1}     \gnl
    \gcn{2}{1}{3}{2}        \gcl{1}     \gcl{1} \gnl
    \gcmu       \gcl{1}     \gcl{1}     \gnl
    \gmu        \gcl{1}     \gcl{1}     \gnl
    \gend
    -
    \gbeg{4}{7}
    \got{4}{a}\gnl
    \gvac{1}\gcmu \gnl
    \gcn{1}{1}{3}{2} \gcn{1}{1}{3}{4} \gnl
    \gcmu   \gcmu   \gnl
    \gmu \linea    \linea   \gnl
    \gcn{2}{1}{2}{1} \linea  \linea \gnl
    \linea  \gvac{1}    \linea  \linea  \gnl
    \gend
    -
    \gbeg{4}{7}
    \got{2}{a}\gnl
    \gcmu \gnl
    \linea  \gcn{1}{1}{1}{3}    \gnl
    \linea \gwcm{3} \gnl
    \linea \gcn{2}{1}{1}{2} \linea \gnl
    \linea \gcmu \linea \gnl
    \linea \gmu\linea \gnl
    \gend
  \end{eqnarray*}
  Since $C_1 = \cdots = C_{n-1} = 0$ this implies that
  \begin{displaymath}
    \gbeg{5}{7}
    \got{6}{a}\gnl
    \gvac{1}\gwcm{4} \gnl
    \gvac{1}\linea \gvac{2} \linea \gnl
     \gwcm{3} \gvac{1} \linea \gnl
    \gcn{1}{1}{1}{2} \gcn{3}{1}{3}{4} \linea \gnl
    \etiqueta{0}\gcmu   \etiqueta{0}\gcmu   \linea \gnl
    \etiqueta{0}\gmu \etiqueta{0}\gmu \linea \gnl
    \gend
    -
    \gbeg{5}{7}
    \got{5}{a}\gnl
    \gvac{1} \gwcm{3} \gnl
    \gcn{3}{1}{3}{2} \linea \gnl
    \etiqueta{0}\gcmu \gwcm{3} \gnl
    \etiqueta{0}\gmu \gcn{2}{1}{1}{2} \linea \gnl
    \gcn{1}{1}{2}{2} \gvac{1}    \etiqueta{0}\gcmu \linea \gnl
    \gcn{1}{1}{2}{2} \gvac{1} \etiqueta{0}\gmu \linea \gnl
    \gend
    \equiv
    \gbeg{4}{7}
    \got{5}{a}\gnl
    \gvac{1}    \gwcm{3}       \gnl
    \gcn{3}{1}{3}{2}        \gcl{1}     \gnl
    \gcmu       \gvac{1}    \gcl{1}     \gnl
    \gcl{1}  \gcn{2}{1}{1}{2}    \gcl{1} \gnl
    \linea \etiqueta{0}\gcmu        \gcl{1}     \gnl
    \linea \etiqueta{0}\gmu        \gcl{1}      \gnl
    \gend
    +
    \gbeg{4}{7}
    \got{6}{a}\gnl
    \gvac{2}    \gcmu       \gnl
    \gcn{3}{1}{5}{4}        \gcl{1}     \gnl
    \gvac{1}    \gcmu       \gcl{1}     \gnl
    \gcn{2}{1}{3}{2}        \gcl{1}     \gcl{1} \gnl
    \etiqueta{0}\gcmu       \gcl{1}     \gcl{1}     \gnl
    \etiqueta{0}\gmu        \gcl{1}     \gcl{1}     \gnl
    \gend
    -
    \gbeg{4}{7}
    \got{4}{a}\gnl
    \gvac{1}\gcmu \gnl
    \gcn{1}{1}{3}{2} \gcn{1}{1}{3}{4} \gnl
    \etiqueta{0}\gcmu   \gcmu   \gnl
    \etiqueta{0}\gmu \linea    \linea   \gnl
    \gcn{2}{1}{2}{1} \linea  \linea \gnl
    \linea  \gvac{1}    \linea  \linea  \gnl
    \gend
    -
    \gbeg{4}{7}
    \got{2}{a}\gnl
    \gcmu \gnl
    \linea  \gcn{1}{1}{1}{3}    \gnl
    \linea \gwcm{3} \gnl
    \linea \gcn{2}{1}{1}{2} \linea \gnl
    \linea \etiqueta{0}\gcmu \linea \gnl
    \linea \etiqueta{0}\gmu\linea \gnl
    \gend
  \end{displaymath}
  i.e.,
  \begin{displaymath}
  C_n(a) \in \ker(Q \otimes Q \otimes \Id - Q \otimes \Id \otimes \Id - \Id \otimes Q \otimes \Id).
  \end{displaymath}
The space $U(\m) \otimes U(\m) \otimes U(\m)$ is spanned by $\{a^i \otimes b^j \otimes x \mid a,b \in \m, i,j\geq 0, x\in U(\m)\}$ and
  \begin{displaymath}
    (Q \otimes Q \otimes \Id - Q \otimes \Id \otimes \Id - \Id \otimes Q \otimes \Id)(a^i \otimes b^j \otimes x) = (2^{i+j}-2^i - 2^j) a^i \otimes b^j \otimes x.
  \end{displaymath}
  This shows that $C_n(a) \in \m \otimes \m  \otimes U(\m)$. Now, Lemma~\ref{lem:P1} implies that the element
  \begin{displaymath}
    2\left(
    \gbeg{3}{4}
    \got{4}{a}\gnl
    \gvac{1}    \gcmu        \gnl
    \gcn{2}{1}{3}{2}    \linea      \gnl
    \gcmu               \linea      \gnl
    \gend
    -
    \gbeg{3}{4}
    \got{2}{a}\gnl
    \gcmu       \gnl
    \linea      \gcn{1}{1}{1}{2}        \gnl
    \linea      \gcmu                   \gnl
    \gend
    \right)
    =
    \gbeg{4}{7}
    \got{6}{a}\gnl
    \gvac{2}    \gcmu       \gnl
    \gcn{3}{1}{5}{4}        \gcl{1}     \gnl
    \gvac{1}    \gcmu       \gcl{1}     \gnl
    \gcn{2}{1}{3}{2}        \gcl{1}     \gcl{1} \gnl
    \etiqueta{0}\gcmu       \gcl{1}     \gcl{1}     \gnl
    \etiqueta{0}\gmu        \gcl{1}     \gcl{1}     \gnl
    \gend
    -
    \gbeg{4}{7}
    \got{4}{a}\gnl
    \gvac{1}\gcmu \gnl
    \gcn{1}{1}{3}{2} \gcn{1}{1}{3}{4} \gnl
    \etiqueta{0}\gcmu   \gcmu   \gnl
    \etiqueta{0}\gmu \linea    \linea   \gnl
    \gcn{2}{1}{2}{1} \linea  \linea \gnl
    \linea  \gvac{1}    \linea  \linea  \gnl
    \gend
    =
    \gbeg{4}{7}
    \got{6}{a}\gnl
    \gvac{2}    \gcmu       \gnl
    \gcn{3}{1}{5}{4}        \gcl{1}     \gnl
    \gvac{1}    \gcmu       \gcl{1}     \gnl
    \gcn{2}{1}{3}{2}        \gcl{1}     \gcl{1} \gnl
    \gcmu       \gcl{1}     \gcl{1}     \gnl
    \gmu        \gcl{1}     \gcl{1}     \gnl
    \gend
    -
    \gbeg{4}{7}
    \got{4}{a}\gnl
    \gvac{1}\gcmu \gnl
    \gcn{1}{1}{3}{2} \gcn{1}{1}{3}{4} \gnl
    \gcmu   \gcmu   \gnl
    \gmu \linea    \linea   \gnl
    \gcn{2}{1}{2}{1} \linea  \linea \gnl
    \linea  \gvac{1}    \linea  \linea  \gnl
    \gend
  \end{displaymath}
  is skew-symmetric with respect to the first and second slot. Thus $C_n(a) \in \m \wedge \m \otimes U(\m)$. \qed
 \end{proof}

\begin{lemma}
\label{lem:P2}
  If $C_1 = \cdots = C_{n-1} = 0$ then for any $a \in \m$ we have
  \begin{displaymath}
    \gbeg{4}{7}
    \got{4}{a}\gnl
    \gvac{1}\gcmu \gnl
    \gcn{1}{1}{3}{2} \gcn{1}{1}{3}{4} \gnl
    \gcmu   \gcmu   \gnl
    \linea    \linea  \gmu  \gnl
    \linea  \linea \gcn{2}{1}{2}{3} \gnl
    \linea      \linea  \gvac{1} \linea  \gnl
    \gend
    -
    \gbeg{4}{7}
    \got{2}{a}\gnl
    \gcmu \gnl
    \linea \gcn{1}{1}{1}{2} \gnl
    \linea \gcmu \gnl
    \linea \linea \gcn{1}{1}{1}{2} \gnl
    \linea \linea \gcmu \gnl
    \linea \linea \gmu \gnl
    \gend
    \equiv
    \gbeg{3}{5}
    \got{4}{a}\gnl
    \gvac{1}\gcmu   \gnl
    \gcn{2}{1}{3}{2}    \linea \gnl
    \gcmu   \linea  \gnl
    \linea \linea   \linea \gnl
    \gend
    -
    \gbeg{3}{5}
   \got{2}{a}\gnl
    \gcmu \gnl
    \linea  \gcn{1}{1}{1}{2}    \gnl
    \linea  \gcmu   \gnl
    \linea    \linea  \linea \gnl
    \gend
    +
    \gbeg{3}{5}
    \got{2}{a}\gnl
    \gcmu \gnl
    \linea  \gcn{1}{1}{1}{2}    \gnl
    \linea  \gcmu   \gnl
    \linea  \cruce \gnl
    \gend
    -
    \gbeg{3}{5}
     \got{4}{a}\gnl
    \gvac{1}\gcmu   \gnl
    \gcn{2}{1}{3}{2}    \linea \gnl
    \gcmu   \linea  \gnl
    \linea \cruce \gnl
    \gend
    \quad (\modulus h^{n+1})
  \end{displaymath}
\end{lemma}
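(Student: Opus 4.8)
The plan is to deduce Lemma~\ref{lem:P2} from Lemma~\ref{lem:P1} by exploiting the evident left--right symmetry of the situation, so as to avoid running a second long chain of diagrammatic reductions. Reflecting a diagram about a vertical axis turns every product node into $p^{\op}$ and every coproduct node into $\Delta^{\op}$, and it interchanges the two diagrams on the left-hand side of Lemma~\ref{lem:P1} with the two diagrams on the left-hand side of Lemma~\ref{lem:P2}: the depth-three comb capped by a product on the far left becomes the same comb on the right, and the ``split both branches, then multiply on the left'' diagram becomes ``split both branches, then multiply on the right''. Thus it should suffice to run the argument of Lemma~\ref{lem:P1} on the co-opposite structure and then reflect the conclusion back, tracking the overall orientation consistently on both sides of the equality.

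First I would verify that the co-opposite deformation $(U_h(\m),p^{\op}_h,\Delta^{\op}_h)$ satisfies the hypotheses of Lemma~\ref{lem:P1}. Its classical limit is $(U(\m),p^{\op},\Delta^{\op})=(U(\m),p^{\op},\Delta)\cong U(\m^-)$, the enveloping algebra of the Malcev algebra $\m^-$ obtained from $\m$ via $x\mapsto -x$; since Lemma~\ref{lem:P1} holds for an arbitrary Malcev algebra, it applies here with $\m$ replaced by $\m^-$, and the primitive elements are the same subspace. Reflecting the left (resp.\ right) co-Moufang diagram of $(U_h,p_h,\Delta_h)$ yields the right (resp.\ left) co-Moufang diagram for the co-opposite structure, so because $U_h$ satisfies both identities, so does its co-opposite. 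Finally a direct computation on $x$ gives, for the two coassociators, $\bar C_h=-\tau_{13}\,C_h$, where $\tau_{13}$ reverses the three tensor factors; in particular $C_1=\cdots=C_{n-1}=0$ forces $\bar C_1=\cdots=\bar C_{n-1}=0$, so the inductive hypothesis transfers.

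With these checks in hand, Lemma~\ref{lem:P1} applied to $(U_h,p^{\op}_h,\Delta^{\op}_h)$ yields an identity modulo $h^{n+1}$; reflecting it about the vertical axis and reading the result in the original convention reproduces the left-hand side of Lemma~\ref{lem:P2}. To match the right-hand sides I would use the cocommutativity $\Delta_0=\Delta^{\op}_0$ of the \emph{classical} coproduct of $U(\m)$: the two crossing-free terms reflect directly, while the two terms carrying a crossing reflect to crossings on the opposite side and are identified with the remaining two terms of Lemma~\ref{lem:P2} after a single application of $\Delta_0=\Delta^{\op}_0$. The point to be careful about is that the two right-hand sides are not literal mirror images---they agree only up to this classical cocommutativity.

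If the reflection bookkeeping proves too delicate to write cleanly, the fallback is to repeat the proof of Lemma~\ref{lem:P1} verbatim with each ingredient replaced by its right-handed counterpart: parts~2), 4) and~6) of Lemma~\ref{lem:basic} in place of parts~1), 3) and~5), and the right co-Moufang identity in place of the left one, evaluating at primitive $a\in\m$ at the end. Either way, I expect the main obstacle to be exactly the matching of the four summands on the right-hand sides---in particular ensuring that the two crossings end up in the correct slots---rather than any new conceptual difficulty, since the algebraic content is already supplied by Lemma~\ref{lem:P1} and the vanishing of $C_1,\dots,C_{n-1}$.
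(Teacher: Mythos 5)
Your proposal is correct in substance but takes a different route from the paper: the paper proves Lemma~\ref{lem:P2} by rerunning the diagrammatic computation of Lemma~\ref{lem:P1} with the right-handed ingredients --- parts 2) and 6) of Lemma~\ref{lem:basic}, the right co-Moufang identity, the coassociativity of $\Delta_0$ and $\Delta_0=\Delta_0^{\op}$ --- which is exactly your fallback. Your primary reflection argument is a legitimate and more economical alternative, and your checks are the right ones: the mirror of the left (resp.\ right) co-Moufang identity is the right (resp.\ left) one for $(U_h,p_h^{\op},\Delta_h^{\op})$, and $\bar C_h=-\tau_{13}C_h$ transfers the inductive hypothesis. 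Two caveats. First, invoking Lemma~\ref{lem:P1} as a black box for the co-opposite requires its classical limit $(U(\m),p^{\op},\Delta)$ to be identified with the enveloping algebra of a Malcev algebra having the same primitive space; the isomorphism with $U(\m^-)$ is true but is nowhere established in this paper, so the cleanest justification is precisely your own observation that every ingredient of the proof of Lemma~\ref{lem:P1} has a right-handed counterpart --- at which point the duality argument collapses into the fallback. Second, your worry about the right-hand sides is unnecessary, and it is important that it be so: carrying out the reflection carefully, the mirror sends the two left-hand diagrams of Lemma~\ref{lem:P1} to the two left-hand diagrams of Lemma~\ref{lem:P2} in reversed order, and the four right-hand terms to the terms of Lemma~\ref{lem:P2} numbered $2,3,4,1$; both sides thus acquire the same overall sign and the two lemmas are \emph{literal} mirror images, with no appeal to $\Delta_0=\Delta_0^{\op}$ needed to match the crossed terms. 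This is fortunate, because $\Delta_0=\Delta_0^{\op}$ alone would identify two different composites of the full $\Delta_h$ only modulo $h$, not modulo $h^{n+1}$, and Lemma~\ref{lem:P2} must hold without assuming cocommutativity of $\Delta_h$ (it is used later for central simple Lie algebras, where cocommutativity is not available).
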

\begin{proof}
Let $D:=
    \gbeg{4}{7}
    \gvac{1}\gcmu \gnl
    \gcn{1}{1}{3}{2} \gcn{1}{1}{3}{4} \gnl
    \gcmu   \gcmu   \gnl
    \linea    \linea  \gmu  \gnl
    \linea  \linea \gcn{2}{1}{2}{3} \gnl
    \linea      \linea  \gvac{1} \linea  \gnl
    \gend
    -
    \gbeg{4}{7}
    \gcmu \gnl
    \linea \gcn{1}{1}{1}{2} \gnl
    \linea \gcmu \gnl
    \linea \linea \gcn{1}{1}{1}{2} \gnl
    \linea \linea \gcmu \gnl
    \linea \linea \gmu \gnl
    \gend$. Lemma~\ref{lem:basic} and the coassociativity of the comultiplication of $U(\m)$ imply that
    \begin{eqnarray*}
    D &=&
    \gbeg{4}{6}
    \gvac{2} \gcmu \gnl
    \gcn{3}{1}{5}{4} \linea \gnl
    \gvac{1} \gcmu \linea \gnl
    \gcn{2}{1}{3}{2} \gmu \gnl
    \gcmu \gcn{2}{1}{2}{3} \gnl
    \linea \linea \gvac{1} \linea \gnl
    \gend
    -
    \gbeg{4}{6}
    \gwcm{3} \gnl
    \linea \gcn{1}{1}{3}{4} \gnl
    \linea \gvac{1} \gcmu \gnl
    \linea \gcn{2}{1}{3}{2} \linea\gnl
    \linea \gcmu \linea \gnl
    \linea \linea \gmu \gnl
    \gend
    =
    \gbeg{4}{7}
    \gvac{2} \gcmu \gnl
    \gcn{3}{1}{5}{4} \linea \gnl
    \gvac{1} \gcmu \linea \gnl
    \gcn{2}{1}{3}{2} \gmu \gnl
    \gcmu \gcn{2}{1}{2}{3} \gnl
    \linea \linea \gvac{1} \linea \gnl
    \linea \linea \gvac{1} \linea \gnl
    \gend
    -
    \gbeg{4}{7}
    \gwcm{3} \gnl
    \linea \gcn{1}{1}{3}{4} \gnl
    \linea \gvac{1} \gcmu \gnl
    \linea \gcn{2}{1}{3}{2} \linea\gnl
    \linea \etiqueta{+}\gcmu \linea \gnl
    \linea \linea \gmu \gnl
    \linea \linea \gcn{1}{1}{2}{3} \gnl
    \gend
    -
    \gbeg{4}{7}
    \gwcm{3} \gnl
    \linea \gcn{1}{1}{3}{4} \gnl
    \linea \gvac{1} \gcmu \gnl
    \linea \gcn{2}{1}{3}{2} \linea\gnl
    \linea \etiqueta{0}\gcmu \linea \gnl
    \linea \cruce \linea \gnl
    \linea \linea \gmu \gnl
    \gend\\
    &=&
    \gbeg{4}{7}
    \gvac{2} \gcmu \gnl
    \gcn{3}{1}{5}{4} \linea \gnl
    \gvac{1} \gcmu \linea \gnl
    \gcn{2}{1}{3}{2} \gmu \gnl
    \gcmu \gcn{2}{1}{2}{3} \gnl
    \linea \linea \gvac{1} \linea \gnl
    \linea \linea \gvac{1} \linea \gnl
    \gend
    -
    \gbeg{4}{7}
    \gwcm{3} \gnl
    \linea \gcn{1}{1}{3}{4} \gnl
    \linea \gvac{1} \gcmu \gnl
    \linea \gcn{2}{1}{3}{2} \linea\gnl
    \linea \etiqueta{+}\gcmu \linea \gnl
    \linea \linea \gmu \gnl
    \linea \linea \gcn{1}{1}{2}{3} \gnl
    \gend
    +
    \gbeg{4}{7}
    \gwcm{3} \gnl
    \linea \gcn{1}{1}{3}{4} \gnl
    \linea \gvac{1} \gcmu \gnl
    \linea \gcn{2}{1}{3}{2} \linea\gnl
    \linea \etiqueta{+}\gcmu \linea \gnl
    \linea \cruce \linea \gnl
    \linea \linea \gmu \gnl
    \gend
    -
    \gbeg{4}{7}
    \gwcm{3} \gnl
    \linea \gcn{1}{1}{3}{4} \gnl
    \linea \gvac{1} \gcmu \gnl
    \linea \gcn{2}{1}{3}{2} \linea\gnl
    \linea \gcmu \linea \gnl
    \linea \cruce \linea \gnl
    \linea \linea \gmu \gnl
    \gend
    \end{eqnarray*}
    Using Lemma~\ref{lem:basic} again,
    \begin{eqnarray*}
    D &=&
    \gbeg{4}{7}
    \gvac{2} \gcmu \gnl
    \gcn{3}{1}{5}{4} \linea \gnl
    \gvac{1} \gcmu \linea \gnl
    \gcn{2}{1}{3}{2} \gmu \gnl
    \gcmu \gcn{2}{1}{2}{3} \gnl
    \linea \linea \gvac{1} \linea \gnl
    \linea \linea \gvac{1} \linea \gnl
    \gend
    -
    \gbeg{4}{7}
    \gwcm{3} \gnl
    \linea \gcn{1}{1}{3}{4} \gnl
    \linea \gvac{1} \gcmu \gnl
    \linea \gcn{2}{1}{3}{2} \linea\gnl
    \linea \etiqueta{+}\gcmu \linea \gnl
    \linea \linea \gmu \gnl
    \linea \linea \gcn{1}{1}{2}{3} \gnl
    \gend
    +
    \gbeg{4}{7}
    \gwcm{3} \gnl
    \linea \gcn{1}{1}{3}{4} \gnl
    \linea \gvac{1} \gcmu \gnl
    \linea \gcn{2}{1}{3}{2} \linea\gnl
    \linea \etiqueta{+}\gcmu \linea \gnl
    \linea \cruce \linea \gnl
    \linea \linea \gmu \gnl
    \gend
    -
    \gbeg{4}{7}
    \gvac{2}    \gcmu       \gnl
    \gcn{3}{1}{5}{4}        \gcl{1}     \gnl
    \gvac{1}    \gcmu       \gcl{1}     \gnl
    \gcn{2}{1}{3}{2}        \gcl{1}     \gcl{1} \gnl
    \gcmu       \gcl{1}     \gcl{1}     \gnl
    \gcl{1}     \gbr        \gcl{1}     \gnl
    \gcl{1}     \gcl{1}     \gmu        \gnl
    \gend
    \\
    &\equiv&
    \gbeg{4}{7}
    \gvac{2} \gcmu \gnl
    \gcn{3}{1}{5}{4} \linea \gnl
    \gvac{1} \gcmu \linea \gnl
    \gcn{2}{1}{3}{2} \gmu \gnl
    \gcmu \gcn{2}{1}{2}{3} \gnl
    \linea \linea \gvac{1} \linea \gnl
    \linea \linea \gvac{1} \linea \gnl
    \gend
    -
    \gbeg{4}{7}
    \gwcm{3} \gnl
    \linea \gcn{1}{1}{3}{4} \gnl
    \linea \gvac{1} \gcmu \gnl
    \linea \gcn{2}{1}{3}{2} \linea\gnl
    \linea \etiqueta{+}\gcmu \linea \gnl
    \linea \linea \gmu \gnl
    \linea \linea \gcn{1}{1}{2}{3} \gnl
    \gend
    +
    \gbeg{4}{7}
    \gwcm{3} \gnl
    \linea \gcn{1}{1}{3}{4} \gnl
    \linea \gvac{1} \gcmu \gnl
    \linea \gcn{2}{1}{3}{2} \linea\gnl
    \linea \etiqueta{+}\gcmu \linea \gnl
    \linea \cruce \linea \gnl
    \linea \linea \gmu \gnl
    \gend
    -
    \gbeg{4}{7}
    \gvac{2}    \etiqueta{0}\gcmu       \gnl
    \gcn{3}{1}{5}{4}        \gcl{1}     \gnl
    \gvac{1}    \gcmu       \gcl{1}     \gnl
    \gcn{2}{1}{3}{2}        \gcl{1}     \gcl{1} \gnl
    \gcmu       \gcl{1}     \gcl{1}     \gnl
    \gcl{1}     \gbr        \gcl{1}     \gnl
    \gcl{1}     \gcl{1}     \gmu        \gnl
    \gend
    -
    \gbeg{4}{7}
    \gvac{1}    \etiqueta{+}\gwcm{3}       \gnl
    \gcn{3}{1}{3}{2}        \gcl{1}     \gnl
    \gcmu      \gvac{1}     \linea     \gnl
    \linea      \gcn{1}{1}{1}{2} \gvac{1}       \linea \gnl
    \linea      \gcmu       \linea      \gnl
    \linea      \cruce       \linea     \gnl
    \linea      \linea      \gmu   \gnl
    \gend \quad (\modulus{h^{n+1}})\\
    &=&
    \gbeg{4}{7}
    \gvac{2} \gcmu \gnl
    \gcn{3}{1}{5}{4} \linea \gnl
    \gvac{1} \gcmu \linea \gnl
    \gcn{2}{1}{3}{2} \gmu \gnl
    \gcmu \gcn{2}{1}{2}{3} \gnl
    \linea \linea \gvac{1} \linea \gnl
    \linea \linea \gvac{1} \linea \gnl
    \gend
    -
    \gbeg{4}{7}
    \gwcm{3} \gnl
    \linea \gcn{1}{1}{3}{4} \gnl
    \linea \gvac{1} \gcmu \gnl
    \linea \gcn{2}{1}{3}{2} \linea\gnl
    \linea \etiqueta{+}\gcmu \linea \gnl
    \linea \linea \gmu \gnl
    \linea \linea \gcn{1}{1}{2}{3} \gnl
    \gend
    +
    \gbeg{4}{7}
    \gwcm{3} \gnl
    \linea \gcn{1}{1}{3}{4} \gnl
    \linea \gvac{1} \gcmu \gnl
    \linea \gcn{2}{1}{3}{2} \linea\gnl
    \linea \etiqueta{+}\gcmu \linea \gnl
    \linea \cruce \linea \gnl
    \linea \linea \gmu \gnl
    \gend
    -
    \gbeg{4}{7}
    \gvac{2}    \etiqueta{0}\gcmu       \gnl
    \gcn{3}{1}{5}{4}        \gcl{1}     \gnl
    \gvac{1}    \gcmu       \gcl{1}     \gnl
    \gcn{2}{1}{3}{2}        \gcl{1}     \gcl{1} \gnl
    \gcmu       \gcl{1}     \gcl{1}     \gnl
    \gcl{1}     \gbr        \gcl{1}     \gnl
    \gcl{1}     \gcl{1}     \gmu        \gnl
    \gend
    -
    \gbeg{4}{7}
    \gvac{1}    \etiqueta{+}\gwcm{3}       \gnl
    \gcn{3}{1}{3}{2}        \gcl{1}     \gnl
    \gcmu      \gvac{1}     \linea     \gnl
    \linea      \gcn{1}{1}{1}{2} \gvac{1}       \linea \gnl
    \linea      \etiqueta{+}\gcmu       \linea      \gnl
    \linea      \cruce       \linea     \gnl
    \linea      \linea      \gmu   \gnl
    \gend
    -
    \gbeg{4}{7}
    \gvac{1} \etiqueta{+}\gwcm{3} \gnl
    \gcn{3}{1}{3}{2} \linea \gnl
    \gcmu \gvac{1} \linea \gnl
    \linea \gcn{2}{1}{1}{2} \linea \gnl
    \linea \etiqueta{0}\gcmu \linea \gnl
    \linea \linea \linea \linea\gnl
    \linea \linea \gmu \gnl
    \gend
    \end{eqnarray*}
    where the second congruence follows from the fact that $C_1 = \cdots = C_{n-1} = 0$ and the last equality is a consequence of the coassociativity of $U(\m)$. Again, using $C_1 = \cdots = C_{n-1} = 0$ we deduce that modulo $h^{n+1}$
    \begin{eqnarray*}
    D &\equiv& \gbeg{4}{7}
    \gvac{2} \etiqueta{0}\gcmu \gnl
    \gcn{3}{1}{5}{4} \linea \gnl
    \gvac{1} \gcmu \linea \gnl
    \gcn{2}{1}{3}{2} \gmu \gnl
    \gcmu \gcn{2}{1}{2}{3} \gnl
    \linea \linea \gvac{1} \linea \gnl
    \linea \linea \gvac{1} \linea \gnl
    \gend
    +
    \gbeg{4}{7}
    \gvac{1} \etiqueta{+}\gwcm{3} \gnl
    \gcn{3}{1}{3}{2} \linea \gnl
    \gcmu \gvac{1} \linea \gnl
    \linea \gcn{2}{1}{1}{2} \linea \gnl
    \linea \etiqueta{+}\gcmu \linea \gnl
    \linea \linea \linea \linea\gnl
    \linea \linea \gmu \gnl
    \gend
    -
    \gbeg{4}{7}
    \gwcm{3} \gnl
    \linea \gcn{1}{1}{3}{4} \gnl
    \linea \gvac{1} \gcmu \gnl
    \linea \gcn{2}{1}{3}{2} \linea\gnl
    \linea \etiqueta{+}\gcmu \linea \gnl
    \linea \linea \gmu \gnl
    \linea \linea \gcn{1}{1}{2}{3} \gnl
    \gend
    +
    \gbeg{4}{7}
    \gwcm{3} \gnl
    \linea \gcn{1}{1}{3}{4} \gnl
    \linea \gvac{1} \gcmu \gnl
    \linea \gcn{2}{1}{3}{2} \linea\gnl
    \linea \etiqueta{+}\gcmu \linea \gnl
    \linea \cruce \linea \gnl
    \linea \linea \gmu \gnl
    \gend
    -
    \gbeg{4}{7}
    \gvac{2}    \etiqueta{0}\gcmu       \gnl
    \gcn{3}{1}{5}{4}        \gcl{1}     \gnl
    \gvac{1}    \gcmu       \gcl{1}     \gnl
    \gcn{2}{1}{3}{2}        \gcl{1}     \gcl{1} \gnl
    \gcmu       \gcl{1}     \gcl{1}     \gnl
    \gcl{1}     \gbr        \gcl{1}     \gnl
    \gcl{1}     \gcl{1}     \gmu        \gnl
    \gend
    -
    \gbeg{4}{7}
    \gvac{1}    \etiqueta{+}\gwcm{3}       \gnl
    \gcn{3}{1}{3}{2}        \gcl{1}     \gnl
    \gcmu      \gvac{1}     \linea     \gnl
    \linea      \gcn{1}{1}{1}{2} \gvac{1}       \linea \gnl
    \linea      \etiqueta{+}\gcmu       \linea      \gnl
    \linea      \cruce       \linea     \gnl
    \linea      \linea      \gmu   \gnl
    \gend
    \\
    &\equiv& \gbeg{4}{7}
    \gvac{2} \etiqueta{0}\gcmu \gnl
    \gcn{3}{1}{5}{4} \linea \gnl
    \gvac{1} \gcmu \linea \gnl
    \gcn{2}{1}{3}{2} \gmu \gnl
    \gcmu \gcn{2}{1}{2}{3} \gnl
    \linea \linea \gvac{1} \linea \gnl
    \linea \linea \gvac{1} \linea \gnl
    \gend
    +
    \gbeg{4}{7}
    \gvac{1} \etiqueta{+}\gwcm{3} \gnl
    \gcn{3}{1}{3}{2} \linea \gnl
    \gcmu \gvac{1} \linea \gnl
    \linea \gcn{2}{1}{1}{2} \linea \gnl
    \linea \etiqueta{+}\gcmu \linea \gnl
    \linea \linea \linea \linea\gnl
    \linea \linea \gmu \gnl
    \gend
    -
    \gbeg{4}{7}
    \gvac{1} \gwcm{3} \gnl
    \gcn{3}{1}{3}{2} \linea \gnl
    \gcmu \gvac{1} \linea \gnl
    \linea \gcn{2}{1}{1}{2} \linea \gnl
    \linea \etiqueta{+}\gcmu \linea \gnl
    \linea \linea \linea \linea\gnl
    \linea \linea \gmu \gnl
    \gend
    +
    \gbeg{4}{7}
    \gvac{1}   \gwcm{3}       \gnl
    \gcn{3}{1}{3}{2}        \gcl{1}     \gnl
    \gcmu      \gvac{1}     \linea     \gnl
    \linea      \gcn{1}{1}{1}{2} \gvac{1}       \linea \gnl
    \linea      \etiqueta{+}\gcmu       \linea      \gnl
    \linea      \cruce       \linea     \gnl
    \linea      \linea      \linea \linea   \gnl
    \gend
    -
    \gbeg{4}{7}
    \gvac{2}    \etiqueta{0}\gcmu       \gnl
    \gcn{3}{1}{5}{4}        \gcl{1}     \gnl
    \gvac{1}    \gcmu       \gcl{1}     \gnl
    \gcn{2}{1}{3}{2}        \gcl{1}     \gcl{1} \gnl
    \gcmu       \gcl{1}     \gcl{1}     \gnl
    \gcl{1}     \gbr        \gcl{1}     \gnl
    \gcl{1}     \gcl{1}     \gmu        \gnl
    \gend
    -
    \gbeg{4}{7}
    \gvac{1}    \etiqueta{+}\gwcm{3}       \gnl
    \gcn{3}{1}{3}{2}        \gcl{1}     \gnl
    \gcmu      \gvac{1}     \linea     \gnl
    \linea      \gcn{1}{1}{1}{2} \gvac{1}       \linea \gnl
    \linea      \etiqueta{+}\gcmu       \linea      \gnl
    \linea      \cruce       \linea     \gnl
    \linea      \linea      \gmu   \gnl
    \gend
    \end{eqnarray*}
    Simplifying we get
    \begin{displaymath}
    D \equiv \gbeg{4}{7}
    \gvac{2} \etiqueta{0}\gcmu \gnl
    \gcn{3}{1}{5}{4} \linea \gnl
    \gvac{1} \gcmu \linea \gnl
    \gcn{2}{1}{3}{2} \gmu \gnl
    \gcmu \gcn{2}{1}{2}{3} \gnl
    \linea \linea \gvac{1} \linea \gnl
    \linea \linea \gvac{1} \linea \gnl
    \gend
    -
    \gbeg{4}{7}
    \gvac{1} \etiqueta{0}\gwcm{3} \gnl
    \gcn{3}{1}{3}{2} \linea \gnl
    \gcmu \gvac{1} \linea \gnl
    \linea \gcn{2}{1}{1}{2} \linea \gnl
    \linea \etiqueta{+}\gcmu \linea \gnl
    \linea \linea \linea \linea\gnl
    \linea \linea \gmu \gnl
    \gend
    +
    \gbeg{4}{7}
    \gvac{1}    \etiqueta{0}\gwcm{3}       \gnl
    \gcn{3}{1}{3}{2}        \gcl{1}     \gnl
    \gcmu      \gvac{1}     \linea     \gnl
    \linea      \gcn{1}{1}{1}{2} \gvac{1}       \linea \gnl
    \linea      \etiqueta{+}\gcmu       \linea      \gnl
    \linea      \cruce       \linea     \gnl
    \linea      \linea      \linea \linea   \gnl
    \gend
    -
    \gbeg{4}{7}
    \gvac{2}    \etiqueta{0}\gcmu       \gnl
    \gcn{3}{1}{5}{4}        \gcl{1}     \gnl
    \gvac{1}    \gcmu       \gcl{1}     \gnl
    \gcn{2}{1}{3}{2}        \gcl{1}     \gcl{1} \gnl
    \gcmu       \gcl{1}     \gcl{1}     \gnl
    \gcl{1}     \gbr        \gcl{1}     \gnl
    \gcl{1}     \gcl{1}     \gmu        \gnl
    \gend \quad (\modulus{h^{n+1}})
    \end{displaymath}
    Since $\Delta_0 = \Delta^{\op}_0$ then
    \begin{displaymath}
      D \equiv
      \gbeg{4}{7}
    \gvac{2} \etiqueta{0}\gcmu \gnl
    \gcn{3}{1}{5}{4} \linea \gnl
    \gvac{1} \gcmu \linea \gnl
    \gcn{2}{1}{3}{2} \gmu \gnl
    \gcmu \gcn{2}{1}{2}{3} \gnl
    \linea \linea \gvac{1} \linea \gnl
    \linea \linea \gvac{1} \linea \gnl
    \gend
    -
    \gbeg{4}{7}
    \gvac{1} \etiqueta{0}\gwcm{3} \gnl
    \gcn{3}{1}{3}{2} \linea \gnl
    \gcmu \gvac{1} \linea \gnl
    \linea \gcn{2}{1}{1}{2} \linea \gnl
    \linea \gcmu \linea \gnl
    \linea \linea \linea \linea\gnl
    \linea \linea \gmu \gnl
    \gend
    +
    \gbeg{4}{7}
    \gvac{1}    \etiqueta{0}\gwcm{3}       \gnl
    \gcn{3}{1}{3}{2}        \gcl{1}     \gnl
    \gcmu      \gvac{1}     \linea     \gnl
    \linea      \gcn{1}{1}{1}{2} \gvac{1}       \linea \gnl
    \linea      \gcmu       \linea      \gnl
    \linea      \cruce       \linea     \gnl
    \linea      \linea      \linea \linea   \gnl
    \gend
    -
    \gbeg{4}{7}
    \gvac{2}    \etiqueta{0}\gcmu       \gnl
    \gcn{3}{1}{5}{4}        \gcl{1}     \gnl
    \gvac{1}    \gcmu       \gcl{1}     \gnl
    \gcn{2}{1}{3}{2}        \gcl{1}     \gcl{1} \gnl
    \gcmu       \gcl{1}     \gcl{1}     \gnl
    \gcl{1}     \gbr        \gcl{1}     \gnl
    \gcl{1}     \gcl{1}     \gmu        \gnl
    \gend\quad (\modulus{h^{n+1}})
    \end{displaymath}
   The result follows by evaluating this expression at $a$.\qed
\end{proof}

\begin{lemma}
  If $C_1 = \cdots = C_{n-1} = 0$ then for any $a \in \m$ we have
  \begin{displaymath}
    C_n(a) \in \m \wedge \m \wedge \m
  \end{displaymath}
\end{lemma}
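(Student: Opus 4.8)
The plan is to upgrade the antisymmetry in the first two slots, already supplied by Lemma~\ref{lem:skew}, to full antisymmetry by first proving the mirror statement---antisymmetry in the last two slots---and then combining the two by a symmetric-group argument. Lemma~\ref{lem:skew} gives $C_n(a) \in \m \wedge \m \otimes U(\m)$, so the first and second slots are primitive and the transposition $(12)$ acts on $C_n(a)$ by $-1$. The first thing I would do is establish the reflected inclusion $C_n(a) \in U(\m) \otimes \m \wedge \m$.

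To get this, I would rerun the proof of Lemma~\ref{lem:skew} line by line, replacing each appeal to Lemma~\ref{lem:P1} by the corresponding appeal to Lemma~\ref{lem:P2}. Since Lemma~\ref{lem:P2} is exactly the left-right reflection of Lemma~\ref{lem:P1} (its crossings act on the last two strands rather than the first two), the same sequence of steps---stacking the two comparison relations, splitting the coproduct into its $0$- and $+$-parts, and using $C_1 = \cdots = C_{n-1} = 0$ repeatedly---shows that $C_n(a)$ lies in the kernel of the reflected operator
\begin{displaymath}
  \Id \otimes Q \otimes Q - \Id \otimes Q \otimes \Id - \Id \otimes \Id \otimes Q .
\end{displaymath}
On the spanning family $\{x \otimes a^i \otimes b^j \mid x \in U(\m),\ a,b \in \m,\ i,j \geq 0\}$ this operator acts by the scalar $2^{i+j} - 2^i - 2^j$, which vanishes only when $i = j = 1$. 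Hence the second and third slots of $C_n(a)$ are primitive, and the accompanying skew-symmetry statement (the reflection of the one obtained inside Lemma~\ref{lem:skew}) shows that $(23)$ also acts by $-1$; thus $C_n(a) \in U(\m) \otimes \m \wedge \m$.

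Intersecting the two inclusions, $C_n(a) \in \m \otimes \m \otimes \m$ and both $(12)$ and $(23)$ act on it by $-1$. Because these two transpositions generate $S_3$ and the assignment sending each to $-1$ respects the defining relations $s_1^2 = s_2^2 = (s_1 s_2)^3 = e$ (one checks $(-1)^2 = 1$ and $(-1)^6 = 1$), the line $k\,C_n(a)$ carries the sign representation of $S_3$. Consequently every $\sigma \in S_3$ acts by $\operatorname{sgn}(\sigma)$, so $C_n(a)$ is totally antisymmetric and lies in $\m \wedge \m \wedge \m$.

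I expect the only genuine obstacle to be the bookkeeping in the first step: verifying that Lemma~\ref{lem:P2} really does occupy the same position in the argument as Lemma~\ref{lem:P1}, so that the reflected kernel and the reflected skew-symmetry emerge correctly. Once the mirror inclusion is in hand, the combination into $\m \wedge \m \wedge \m$ is immediate from the fact that adjacent transpositions generate the symmetric group.
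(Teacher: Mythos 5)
Your proposal is correct and follows exactly the route the paper intends: the paper's proof is literally ``Similar to the proof of Lemma~\ref{lem:skew}'', meaning one reruns that argument with Lemma~\ref{lem:P2} in place of Lemma~\ref{lem:P1} to get $C_n(a) \in U(\m) \otimes \m \wedge \m$ and then intersects with the inclusion $C_n(a)\in \m\wedge\m\otimes U(\m)$. Your explicit verification that $(2^i-1)(2^j-1)=1$ forces $i=j=1$ and that the transpositions $(12)$ and $(23)$ generating $S_3$ yield total antisymmetry just fills in details the paper leaves implicit.
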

\begin{proof}
  Similar to the proof of Lemma~\ref{lem:skew}. \qed
\end{proof}

\begin{theorem}
  Any bialgebra deformation of $U(\M(\alpha,\beta,\gamma))$ satisfying the left and right co-Moufang identities is coassociative and cocommutative.
\end{theorem}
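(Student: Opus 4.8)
The plan is to assemble the theorem from the two halves the preceding development has prepared. Cocommutativity is already the content of Theorem~\ref{thm:cocommutative}, so the entire remaining task is coassociativity, i.e.\ to show that the coassociator $C_h$ vanishes identically. I would argue by induction on $n$ that $C_n=0$, the case $n=0$ being the coassociativity of $U(\m)$. So assume $C_1=\cdots=C_{n-1}=0$. The first, purely formal, observation is that both $(\Delta_h\tilde{\otimes}\Id)\Delta_h$ and $(\Id\tilde{\otimes}\Delta_h)\Delta_h$ are homomorphisms of algebras from $U_h(\m)$ into $U_h(\m)^{\tilde{\otimes}3}$, since $\Delta_h$ is multiplicative and tensor powers of multiplicative maps are again multiplicative. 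Under the inductive hypothesis these two homomorphisms agree modulo $h^{n}$, and their degree-$n$ difference is exactly $C_n$; hence Lemma~\ref{lem:induction}, applied to this pair, reduces the whole problem to the single assertion that $C_n(a)=0$ for every primitive $a\in\m$.

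To reach that assertion I would run the chain of identities already assembled. The preceding lemma shows, under $C_1=\cdots=C_{n-1}=0$, that $C_n(a)\in\m\wedge\m\wedge\m$ for $a\in\m$. Concretely, Lemma~\ref{lem:P1} together with Lemma~\ref{lem:skew} places $C_n(a)$ in $\m\wedge\m\otimes U(\m)$: the eigenvalue computation for $Q\otimes Q\otimes\Id-Q\otimes\Id\otimes\Id-\Id\otimes Q\otimes\Id$ forces the first two legs into $\m$, because on $a^i\otimes b^j\otimes x$ the operator acts by $2^{i+j}-2^{i}-2^{j}$, which vanishes only for $i=j=1$, and the skew-symmetry supplied by Lemma~\ref{lem:skew} antisymmetrizes those two legs. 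Lemma~\ref{lem:P2} and the same eigenvalue/skew argument then antisymmetrize the last two legs. Since the transpositions $(1\,2)$ and $(2\,3)$ generate $S_3$, the two partial antisymmetries upgrade to full antisymmetry, giving $C_n(a)\in\m\wedge\m\wedge\m$, precisely the statement of the final auxiliary lemma.

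It remains to pass from $C_n(a)\in\m\wedge\m\wedge\m$ to $C_n(a)=0$, and this is the step I expect to be the real obstacle. It is the exact coassociative counterpart of the way the cocommutativity argument closed: there one reduced $(\Delta_n-\Delta^{\op}_n)(a)$ to $\m\wedge\m$ and finished by the module-theoretic input of \cite{MP}, and here one must settle the analogous gap for the fully skew object in $\m\wedge\m\wedge\m$. I would set this up by noting that $a\mapsto C_n(a)$ is not arbitrary: the leading term of the coassociator is forced by the co-pentagon to be a cocycle for the central simple Malcev algebra $\m$ with coefficients in the module $\m\wedge\m\wedge\m$. The delicacy is that this module need not be invariant-free---for the traceless octonions it contains the $\mathrm{Der}(\m)$-invariant associative three-form---so a naive ``no invariants'' argument is not available; this is why coassociativity is expected to hold for every central simple $\m$ while cocommutativity is special to $\M(\alpha,\beta,\gamma)$. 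My intended route is to combine a Whitehead-type vanishing for central simple Malcev algebras with the fact that $C_n$ is a co-Hochschild coboundary: the cocycle is thereby inner, and its coboundary form, restricted to primitives where it must land in the fully skew $\m\wedge\m\wedge\m$, pins the otherwise-free invariant coefficient to zero, yielding $C_n(a)=0$. Feeding this back through Lemma~\ref{lem:induction} gives $C_n=0$, completing the induction; together with Theorem~\ref{thm:cocommutative} this establishes that the deformation is coassociative and cocommutative.
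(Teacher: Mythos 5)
Your overall architecture matches the paper's: cocommutativity is quoted from Theorem~\ref{thm:cocommutative}, coassociativity is proved by induction on $n$ via Lemma~\ref{lem:induction} (with $\varphi=(\Delta_h\tilde{\otimes}\Id)\Delta_h$ and $\psi=(\Id\tilde{\otimes}\Delta_h)\Delta_h$, both multiplicative), and the reduction of $C_n(a)$ into $\m\wedge\m\wedge\m$ via the eigenvalue computation for $Q$ and the skew-symmetrizations from Lemmas~\ref{lem:P1}, \ref{lem:skew} and \ref{lem:P2} is described accurately. The problem is the final step, which you yourself flag as ``the real obstacle'': your proposed route from $C_n(a)\in\m\wedge\m\wedge\m$ to $C_n(a)=0$ is a cohomological argument (a Whitehead-type vanishing plus a claim that the ``invariant coefficient'' is pinned to zero by the coboundary form), and as stated this does not go through. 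For a non-Lie Malcev algebra the map $a\mapsto\ad_a$ is not a homomorphism into derivations, so $\m\wedge\m\wedge\m$ is not a $\m$-module in the sense needed for a $1$-cocycle/coboundary formalism, no Whitehead lemma for central simple Malcev algebras is available in the paper, and --- as you correctly note --- $\Lambda^3\m$ for the traceless octonions \emph{does} contain a $\mathrm{Der}(\m)$-invariant line (the associative three-form), so even granting a coboundary representation $C_n(a)=\lambda_a\cdot r$ there is no argument given for why the invariant component of $r$ must vanish. The sentence ``pins the otherwise-free invariant coefficient to zero'' is precisely the assertion that needs a proof and receives none. (The paper's cohomological argument of this flavour appears only later, for central simple \emph{Lie} algebras, where $H^1$ vanishes and the Casimir kills the relevant element; it is deliberately not used for $\M(\alpha,\beta,\gamma)$.)

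The paper closes the Malcev case by an entirely different, elementary mechanism that you have all the ingredients for but do not deploy. Since $C_n(a)\in\m\wedge\m\wedge\m$, it is an eigenvector of $Q\otimes\Id\otimes\Id$ and of $\Id\otimes\Id\otimes Q$ with eigenvalue $2$; feeding this into the congruences of Lemma~\ref{lem:P1} and Lemma~\ref{lem:P2} expresses $2\bigl((\Delta_h\tilde{\otimes}\Id)\Delta_h(a)-(\Id\tilde{\otimes}\Delta_h)\Delta_h(a)\bigr)$ modulo $h^{n+1}$ in terms of braided variants of the same expression, and the already-proved cocommutativity of $\Delta_h$ converts each braided term into $\pm$ the unbraided one. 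The upshot is $2\,C_n(a)\equiv -\,C_n(a)$, i.e.\ $3\,C_n(a)=0$, hence $C_n(a)=0$ in characteristic zero, and Lemma~\ref{lem:induction} finishes the induction. You should replace your cohomological sketch by this computation (or supply a genuine proof of the vanishing of the invariant component, which the paper does not need and which would require new input).
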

\begin{proof}
  We have proved that $U(\M(\alpha,\beta,\gamma))$ is cocommutative, so we only have to prove that it is coassociative. We will proceed by induction. Assume that $C_1 = \cdots = C_{n-1} = 0$. Since $C_n(a) \in \m \wedge \m \wedge \m$ then it is an eigenvector of $Q \otimes \Id \otimes \Id$ and $\Id \otimes \Id \otimes Q$ of eigenvalue $2$. Hence, Lemma~\ref{lem:P1} and Lemma~\ref{lem:P2} imply  that
  \begin{displaymath}
    2\left(
    \gbeg{3}{5}
    \got{4}{a}                          \gnl
    \gvac{1}            \gcmu           \gnl
    \gcn{2}{1}{3}{2}    \linea          \gnl
    \gcmu               \linea          \gnl
    \linea              \linea   \linea \gnl
    \gend
    -
    \gbeg{3}{5}
    \got{2}{a}                  \gnl
    \gcmu                       \gnl
    \linea  \gcn{1}{1}{1}{2}    \gnl
    \linea  \gcmu               \gnl
    \linea  \linea      \linea  \gnl
    \gend
    \right)
    \equiv
    \gbeg{3}{5}
    \got{4}{a}                          \gnl
    \gvac{1}            \gcmu           \gnl
    \gcn{2}{1}{3}{2}    \linea          \gnl
    \gcmu               \linea          \gnl
    \linea              \linea   \linea \gnl
    \gend
    -
    \gbeg{4}{5}
    \got{4}{a}                  \gnl
    \gvac{1}            \gcmu   \gnl
    \gcn{2}{1}{3}{2}    \linea  \gnl
    \gcmu               \linea  \gnl
    \gbr                \linea \gnl
    \gend
    +
    \gbeg{3}{5}
    \got{2}{a}                  \gnl
    \gcmu                       \gnl
    \linea  \gcn{1}{1}{1}{2}    \gnl
    \linea  \gcmu               \gnl
    \gbr    \linea              \gnl
    \gend
    -
    \gbeg{3}{5}
    \got{2}{a}                  \gnl
    \gcmu                       \gnl
    \linea  \gcn{1}{1}{1}{2}    \gnl
    \linea  \gcmu               \gnl
    \linea  \linea      \linea  \gnl
    \gend
    \quad (\modulus h^{n+1})
  \end{displaymath}
  and
  \begin{displaymath}
    2\left(
    \gbeg{3}{5}
    \got{4}{a}                          \gnl
    \gvac{1}            \gcmu           \gnl
    \gcn{2}{1}{3}{2}    \linea          \gnl
    \gcmu               \linea          \gnl
    \linea              \linea   \linea \gnl
    \gend
    -
    \gbeg{3}{5}
    \got{2}{a}                  \gnl
    \gcmu                       \gnl
    \linea  \gcn{1}{1}{1}{2}    \gnl
    \linea  \gcmu               \gnl
    \linea  \linea      \linea  \gnl
    \gend
    \right)
    \equiv
    \gbeg{3}{5}
    \got{4}{a}\gnl
    \gvac{1}\gcmu   \gnl
    \gcn{2}{1}{3}{2}    \linea \gnl
    \gcmu   \linea  \gnl
    \linea \linea   \linea \gnl
    \gend
    -
    \gbeg{3}{5}
    \got{2}{a}\gnl
    \gcmu \gnl
    \linea  \gcn{1}{1}{1}{2}    \gnl
    \linea  \gcmu   \gnl
    \linea    \linea  \linea \gnl
    \gend
    +
    \gbeg{3}{5}
    \got{2}{a}\gnl
    \gcmu \gnl
    \linea  \gcn{1}{1}{1}{2}    \gnl
    \linea  \gcmu   \gnl
    \linea  \cruce \gnl
    \gend
    -
    \gbeg{3}{5}
    \got{4}{a}\gnl
    \gvac{1}\gcmu   \gnl
    \gcn{2}{1}{3}{2}    \linea \gnl
    \gcmu   \linea  \gnl
    \linea \cruce \gnl
    \gend \quad (\modulus h^{n+1})
  \end{displaymath}
  From the cocommutativity of $\Delta_h$ we get
  \begin{displaymath}
    \gbeg{3}{5}
    \got{4}{a}                          \gnl
    \gvac{1}            \gcmu           \gnl
    \gcn{2}{1}{3}{2}    \linea          \gnl
    \gcmu               \linea          \gnl
    \linea              \linea   \linea \gnl
    \gend
    -
    \gbeg{3}{5}
    \got{2}{a}                  \gnl
    \gcmu                       \gnl
    \linea  \gcn{1}{1}{1}{2}    \gnl
    \linea  \gcmu               \gnl
    \linea  \linea      \linea  \gnl
    \gend
    \equiv
    \gbeg{3}{5}
    \got{2}{a}                  \gnl
    \gcmu                       \gnl
    \linea  \gcn{1}{1}{1}{2}    \gnl
    \linea  \gcmu               \gnl
    \gbr    \linea              \gnl
    \gend
    -
    \gbeg{3}{5}
    \got{4}{a}                          \gnl
    \gvac{1}            \gcmu           \gnl
    \gcn{2}{1}{3}{2}    \linea          \gnl
    \gcmu               \linea          \gnl
    \linea              \linea   \linea \gnl
    \gend
,\quad
    \gbeg{3}{5}
    \got{4}{a}                          \gnl
    \gvac{1}            \gcmu           \gnl
    \gcn{2}{1}{3}{2}    \linea          \gnl
    \gcmu               \linea          \gnl
    \linea              \linea   \linea \gnl
    \gend
    -
    \gbeg{3}{5}
    \got{2}{a}                  \gnl
    \gcmu                       \gnl
    \linea  \gcn{1}{1}{1}{2}    \gnl
    \linea  \gcmu               \gnl
    \linea  \linea      \linea  \gnl
    \gend
    \equiv
     \gbeg{3}{5}
     \got{2}{a}\gnl
    \gcmu \gnl
    \linea  \gcn{1}{1}{1}{2}    \gnl
    \linea  \gcmu   \gnl
    \linea    \linea  \linea \gnl
    \gend
    -
    \gbeg{3}{5}
    \got{4}{a}\gnl
    \gvac{1}\gcmu   \gnl
    \gcn{2}{1}{3}{2}    \linea \gnl
    \gcmu   \linea  \gnl
    \linea \cruce \gnl
    \gend
     \quad (\modulus h^{n+1})
  \end{displaymath}
and
  \begin{displaymath}
    \gbeg{3}{5}
    \got{2}{a}                  \gnl
    \gcmu                       \gnl
    \linea  \gcn{1}{1}{1}{2}    \gnl
    \linea  \gcmu               \gnl
    \gbr    \linea              \gnl
    \gend
    \equiv
    \gbeg{3}{6}
    \got{4}{a}                  \gnl
    \gvac{1}\gcmu \gnl
    \gvac{1}\cruce \gnl
    \gvac{1}\gcn{1}{1}{1}{0}    \linea \gnl
    \gcmu \linea \gnl
    \linea \cruce \gnl
    \gend
    \equiv
    \gbeg{3}{5}
    \got{4}{a}                  \gnl
    \gvac{1}\gcmu   \gnl
    \gcn{2}{1}{3}{2}    \linea \gnl
    \gcmu   \linea  \gnl
    \linea \cruce \gnl
    \gend  \quad (\modulus h^{n+1})
    \end{displaymath}
    so
    \begin{displaymath}
    2\left(
    \gbeg{3}{5}
    \got{4}{a}                          \gnl
    \gvac{1}            \gcmu           \gnl
    \gcn{2}{1}{3}{2}    \linea          \gnl
    \gcmu               \linea          \gnl
    \linea              \linea   \linea \gnl
    \gend
    -
    \gbeg{3}{5}
    \got{2}{a}                  \gnl
    \gcmu                       \gnl
    \linea  \gcn{1}{1}{1}{2}    \gnl
    \linea  \gcmu               \gnl
    \linea  \linea      \linea  \gnl
    \gend
    \right)
    \equiv
    -
    \gbeg{3}{5}
    \got{4}{a}                          \gnl
    \gvac{1}            \gcmu           \gnl
    \gcn{2}{1}{3}{2}    \linea          \gnl
    \gcmu               \linea          \gnl
    \linea              \linea   \linea \gnl
    \gend
    +
    \gbeg{3}{5}
    \got{2}{a}                  \gnl
    \gcmu                       \gnl
    \linea  \gcn{1}{1}{1}{2}    \gnl
    \linea  \gcmu               \gnl
    \linea  \linea      \linea  \gnl
    \gend  \quad (\modulus h^{n+1})
    \end{displaymath}
    Therefore, $\gbeg{3}{5}
    \got{4}{a}                          \gnl
    \gvac{1}            \gcmu           \gnl
    \gcn{2}{1}{3}{2}    \linea          \gnl
    \gcmu               \linea          \gnl
    \gend
    -
    \gbeg{3}{5}
    \got{2}{a}                  \gnl
    \gcmu                       \gnl
    \linea  \gcn{1}{1}{1}{2}    \gnl
    \linea  \gcmu               \gnl
    \gend = 0$, i.e., $C_n(a) = 0$ for all $a \in \m$. The result follows from Lemma~\ref{lem:induction}. \qed
\end{proof}

\begin{theorem}\label{thm:kernel}
Let $U_h(\m)$ be a bialgebra deformation of $U(\m)$ satisfying the left and right co-Moufang identities. Then, for any $x \in U_h(\m)$ the element $C_h(x)$ belongs to the kernel of the map
  \begin{eqnarray*}
    U_h(\m) \tilde{\otimes} U_h(\m) \tilde{\otimes} U_h(\m) & \rightarrow &  U_h(\m) \tilde{\otimes} U_h(\m) \tilde{\otimes} U_h(\m) \\
    x  \tilde{\otimes} y \tilde{\otimes} z &\mapsto & \sum x_{\si} \bullet y  \tilde{\otimes} x_{\sii}  \tilde{\otimes} z + x \bullet z_{\si}  \tilde{\otimes} y  \tilde{\otimes} z_{\sii}
  \end{eqnarray*}
  \end{theorem}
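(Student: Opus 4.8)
The plan is to exploit that the displayed map splits as $T=L+R$, where
\[
L(u\tilde{\otimes} v\tilde{\otimes} w)=\sum u_\si\bullet v\tilde{\otimes} u_\sii\tilde{\otimes} w,
\qquad
R(u\tilde{\otimes} v\tilde{\otimes} w)=\sum u\bullet w_\si\tilde{\otimes} v\tilde{\otimes} w_\sii,
\]
and to feed into $L$ and $R$ the two summands of the coassociator $C_h(x)=(\Delta_h\tilde{\otimes}\Id)\Delta_h(x)-(\Id\tilde{\otimes}\Delta_h)\Delta_h(x)$. By $K$-linearity, $T(C_h(x))$ is an alternating sum of four terms, $L$ and $R$ evaluated on $(\Delta_h\tilde{\otimes}\Id)\Delta_h(x)$ minus $L$ and $R$ evaluated on $(\Id\tilde{\otimes}\Delta_h)\Delta_h(x)$. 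The whole point is that these four terms cancel in two independent pairings.

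First I would identify $L\big((\Delta_h\tilde{\otimes}\Id)\Delta_h(x)\big)$ and $R\big((\Id\tilde{\otimes}\Delta_h)\Delta_h(x)\big)$ as the two sides of the \emph{left} co-Moufang identity. Indeed, writing $(\Delta_h\tilde{\otimes}\Id)\Delta_h(x)=\sum x_{\si\si}\tilde{\otimes} x_{\si\sii}\tilde{\otimes} x_{\sii}$ and letting $L$ apply $\Delta_h$ once more to its first factor $u=x_{\si\si}$ reproduces $\sum x_{\si\si\si}\bullet x_{\si\sii}\tilde{\otimes} x_{\si\si\sii}\tilde{\otimes} x_{\sii}$, the right-hand side of the left co-Moufang diagram; dually, writing $(\Id\tilde{\otimes}\Delta_h)\Delta_h(x)=\sum x_{\si}\tilde{\otimes} x_{\sii\si}\tilde{\otimes} x_{\sii\sii}$ and letting $R$ apply $\Delta_h$ to its third factor $w=x_{\sii\sii}$ produces $\sum x_{\si}\bullet x_{\sii\sii\si}\tilde{\otimes} x_{\sii\si}\tilde{\otimes} x_{\sii\sii\sii}$, the left-hand side. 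Thus the left co-Moufang identity is exactly the equality $L\big((\Delta_h\tilde{\otimes}\Id)\Delta_h(x)\big)=R\big((\Id\tilde{\otimes}\Delta_h)\Delta_h(x)\big)$, so these two contributions to $T(C_h(x))$ cancel.

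It then remains to handle the leftover pair $R\big((\Delta_h\tilde{\otimes}\Id)\Delta_h(x)\big)$ and $L\big((\Id\tilde{\otimes}\Delta_h)\Delta_h(x)\big)$, and I expect these to cancel \emph{formally}, with no appeal to any co-Moufang identity. Each of them only requires the doubly iterated coproduct $(\Delta_h\tilde{\otimes}\Delta_h)\Delta_h$, which is unambiguous since it is a plain composition of maps, and each then multiplies the $(1,1)$-component with the $(2,1)$-component while leaving the $(1,2)$- and $(2,2)$-components in the second and third slots. Concretely both equal $\sum x_{\si\si}\bullet x_{\sii\si}\tilde{\otimes} x_{\si\sii}\tilde{\otimes} x_{\sii\sii}$, so their difference is zero. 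Combining the two cancellations yields $T(C_h(x))=0$, and since $T$ and $C_h$ are $K$-linear and $h$-adically continuous, the identity extends from the computation to every $x\in U_h(\m)$.

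The one step demanding genuine care — and the one I would treat as the main obstacle — is the bookkeeping of the nested coproducts: because $\Delta_h$ is not assumed coassociative, a symbol such as $x_{\sii\sii\si}$ is meaningful only once anchored to a specific composite of $\Delta_h$'s, so I would carry the graphical diagrams (or, equivalently, track each term as an explicit composite map) rather than trust bare multi-indices. Doing this also makes transparent why only the \emph{left} co-Moufang identity is needed: every term produced by $T$ carries its $\bullet$-product in the first tensor slot, which matches the left co-Moufang identity (whose products sit in the first slot) and is disjoint from the right one (whose products sit in the third slot).
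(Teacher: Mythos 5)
Your proposal is correct and follows essentially the same route as the paper: the paper also splits the map as a sum of two operators (its $R$ and $S$ are your $L$ and $R$), identifies the left co-Moufang identity with $R(C_l(x))=S(C_r(x))$, and cancels the remaining pair $S(C_l(x))=R(C_r(x))$ by the purely formal observation that both equal $(\Delta_h\tilde{\otimes}\Delta_h)\Delta_h$ followed by multiplying the first and third slots. Your remark that only the left co-Moufang identity is actually used matches the paper's proof as well.
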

\begin{proof}
 Let $R \colon x \tilde{\otimes} y \tilde{\otimes} z \mapsto \sum x_{\si} \bullet y \tilde{\otimes} x_{\sii} \tilde{\otimes} z$ and $S \colon x \tilde{\otimes} y \tilde{\otimes} z \tilde{\otimes} \sum x \bullet z_{\si} \tilde{\otimes} y \tilde{\otimes} z_{\sii}$. The left co-Moufang identity
 \begin{displaymath}
        \gbeg{4}{8}
        \gvac{2}    \gcmu       \gnl
        \gcn{3}{1}{5}{4}        \gcl{1}     \gnl
        \gvac{1}    \gcmu       \gcl{1}     \gnl
        \gcn{2}{1}{3}{2}        \gcl{1}     \gcl{1} \gnl
        \gcmu       \gcl{1}     \gcl{1}     \gnl
        \gcl{1}     \gbr        \gcl{1}     \gnl
        \gbox{4}{4}\gmu        \gcl{1}     \gcl{1}     \gnl
        \gend
        =
        \gbeg{4}{8}
        \gcmu \gnl
        \gcl{1} \gcn{1}{1}{1}{2}        \gnl
        \gcl{1} \gcmu                   \gnl
        \gcl{1} \gcl{1} \gcn{1}{1}{1}{2}\gnl
        \gcl{1} \gcl{1} \gcmu           \gnl
        \gcl{1} \gbr    \gcl{1}         \gnl
        \gbox{4}{4} \gmu    \gcl{1} \gcl{1}         \gnl
        \gend
 \end{displaymath}
 can be written as
 $R(C_l(x)) = S(C_r(x))$ where $C_l(x) = (\Delta_h \tilde{\otimes} \Id) \Delta_h (x)$ and $C_r(x) = (\Id \tilde{\otimes} \Delta_h)\Delta_h(x)$. We also observe that
 \begin{displaymath}
    \gbeg{4}{7}
    \gvac{1}\gcmu \gnl
    \gcn{1}{1}{3}{2}  \gcn{1}{2}{3}{4} \gnl
    \gcmu    \gnl
    \linea \linea \gcmu \gnl
    \linea \cruce \linea \gnl
    \gmu \linea    \linea   \gnl
    \gbox{4}{4}\gcn{2}{1}{2}{1} \linea  \linea \gnl
    \gend
    =
    \gbeg{4}{7}
    \gvac{1}\gcmu \gnl
    \gcn{1}{2}{3}{2}  \gcn{1}{1}{3}{4} \gnl
    \gvac{2}    \gcmu    \gnl
    \gcmu \linea \linea \gnl
    \linea \cruce \linea \gnl
    \gmu \linea    \linea   \gnl
    \gbox{4}{4}\gcn{2}{1}{2}{1} \linea  \linea \gnl
    \gend
 \end{displaymath}
 so $S(C_l(x)) = R(C_r(x))$. Therefore, $(R+S)(C_l(x)) = (R+S)(C_r(x))$ and
 \begin{displaymath}
   (R+S)(C_h(x)) = (R+S)(C_l(x)-C_r(x)) = 0.
 \end{displaymath} \qed
\end{proof}

\begin{theorem}
  Let $\g$ be a finite-dimensional central simple Lie algebra and $U_h(\g)$ a bialgebra deformation of $U(\g)$ satisfying the left and right co-Moufang identities. Then $U_h(\g)$ is coassociative.
\end{theorem}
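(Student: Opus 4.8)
The plan is to prove $C_h=0$ by induction on $n$, assuming $C_1=\cdots=C_{n-1}=0$ and deducing $C_n=0$. First I would reduce to primitive elements. The two maps $\varphi_h=(\Delta_h\tilde\otimes\Id)\Delta_h$ and $\psi_h=(\Id\tilde\otimes\Delta_h)\Delta_h$ are both algebra homomorphisms $U_h(\g)\to U_h(\g)^{\tilde\otimes 3}$, and the inductive hypothesis together with the coassociativity of $\Delta_0$ gives $\varphi_i=\psi_i$ for $i=0,\dots,n-1$. Hence Lemma~\ref{lem:induction}, applied in $U_h(\g)^{\tilde\otimes 3}$, shows that $C_n=\varphi_n-\psi_n$ is a $\Delta^{(2)}_0$-derivation, $C_n(xy)=C_n(x)\,\Delta^{(2)}_0(y)+\Delta^{(2)}_0(x)\,C_n(y)$ with $\Delta^{(2)}_0=(\Delta_0\otimes\Id)\Delta_0$. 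Since $U(\g)$ is generated as an algebra by $\g$ and $C_n(1)=0$, it suffices to prove $C_n(a)=0$ for every $a\in\g$.

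Next, the lemmas of this section are stated for an arbitrary Malcev algebra and so apply to the Lie algebra $\g$; they give $C_n(a)\in\g\wedge\g\wedge\g$ for all $a\in\g$. Writing $C_n(a)=\sum t_{ijk}\,e_i\otimes e_j\otimes e_k$ in a fixed basis $\{e_i\}$ of $\g$, with $t$ totally antisymmetric, I would feed this into Theorem~\ref{thm:kernel}. Because $C_h\equiv h^nC_n\pmod{h^{n+1}}$ and $R+S\equiv R_0+S_0\pmod h$, the relation $(R+S)(C_h(a))=0$ reduces at leading order to $(R_0+S_0)(C_n(a))=0$. Evaluating $R_0$ and $S_0$ on primitives and separating the summands by their PBW length in each tensor slot (the part of length one in every slot cancels automatically by antisymmetry), this congruence becomes equivalent to
\[
\sum_{i,j}t_{ijk}\,[e_i,e_j]=0\qquad\text{for every }k,
\]
that is, $(b\otimes\Id)(C_n(a))=0$, where $b\colon\Lambda^2\g\to\g$ is the bracket map and $C_n(a)$ is viewed inside $\Lambda^2\g\otimes\g$.

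This identity says that every contraction of $C_n(a)$ in the last slot is a Chevalley--Eilenberg $2$-cycle. A single such contraction relation is not by itself sufficient: a dimension count shows that $\Lambda^3\g$ cannot embed into $\g\otimes\g$ once $\dim\g$ is large (already for $\mathfrak{sl}_4$), so the bracket contraction alone has nonzero kernel. The additional leverage must come from the derivation structure of $C_n$. Concretely, I would apply Theorem~\ref{thm:kernel} to the products $C_n(ab)$ for $a,b\in\g$, expand by the derivation formula above, and separate graded components in $(R_0+S_0)(C_n(ab))=0$. This should upgrade the contraction identity into a genuine cocycle condition on the linear map $a\mapsto C_n(a)\in\Lambda^3\g$ for the adjoint action of $\g$. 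At that point the hypothesis that $\g$ be central simple, hence semisimple, becomes decisive: by Whitehead's lemmas the pertinent finite-dimensional cohomology of $\g$ vanishes and the adjoint module is irreducible, so the cocycle condition together with the bracket contraction leaves no room for a nonzero contribution, forcing $C_n(a)=0$ for all $a\in\g$. The derivation property then propagates this to $C_n=0$ on all of $U(\g)$, closing the induction.

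The step I expect to be the main obstacle is exactly this last one: converting the lone bracket-contraction relation supplied by the co-Moufang identity into enough constraints to annihilate $C_n|_\g$. The honest difficulty is that Theorem~\ref{thm:kernel} yields only one contraction identity on primitives, which is insufficient on dimensional grounds, so the whole weight falls on the cocycle structure of $C_n$ and on the semisimplicity of $\g$. Making the passage from ``the contraction vanishes and $C_n$ is a cocycle'' to ``$C_n|_\g=0$'' fully rigorous — identifying the correct Lie-algebra cohomology group, and verifying that $b\otimes\Id$ is injective on the adjoint-isotypic components of $\Lambda^3\g$ that could occur — is where the real work lies, and it is here that the restriction to \emph{central simple} $\g$ (rather than an arbitrary semisimple or merely Malcev algebra) is used.
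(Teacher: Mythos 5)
Your skeleton matches the paper's: induction on $n$, the derivation property of $C_n$ coming from Lemma~\ref{lem:induction}, the containment $C_n(\g)\subseteq\g\wedge\g\wedge\g$ supplied by the lemmas of the section, and an eventual appeal to semisimplicity. But the decisive step is missing, and you say so yourself. The identity you extract from Theorem~\ref{thm:kernel} on a single primitive $a$, namely the internal contraction $\sum[a',a'']\otimes a'''=0$, is not the relation that closes the argument, and your plan to exploit it via injectivity of $b\otimes\Id$ on the adjoint-isotypic components of $\Lambda^3\g$ is left entirely unverified. What actually works --- and what the paper does --- is to apply Theorem~\ref{thm:kernel} to $C_h(xy)$, expand using the derivation formula, specialize $x=a$, $y=b\in\g$, project onto $\ker\epsilon\otimes\ker\epsilon\otimes\ker\epsilon$, and use the antisymmetry of $C_n$ on $\g$ to arrive at $\sum[a',b]\otimes a''\otimes a''' = \sum[a,b']\otimes b''\otimes b'''$. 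Setting $b=a$ yields $\sum[a,a']\otimes a''\otimes a'''=0$ and hence, by antisymmetry in all three slots, $\lambda_a\cdot C_n(a)=0$, where $\lambda$ denotes the adjoint action on $\g\wedge\g\wedge\g$. Note that this is an \emph{external} bracket with $a$ acting diagonally, not the internal bracket contraction you derived; the two identities are genuinely different, and only the former is strong enough.

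With that identity in hand the endgame is not the representation-theoretic injectivity you were hoping for but a short coboundary-plus-Casimir argument: $C_n\vert_\g$ is a $1$-cocycle with values in $\g\wedge\g\wedge\g$, hence by Whitehead's first lemma a coboundary, $C_n(a)=\lambda_a\cdot r$ for some fixed $r$; then $\lambda_a^2\cdot r=0$ for all $a$, so after polarizing and contracting with the Killing form the Casimir operator annihilates $r$; since the Casimir acts as a nonzero scalar on every nontrivial irreducible summand, $r$ spans a trivial submodule and $C_n(a)=\lambda_a\cdot r=0$, after which Lemma~\ref{lem:induction} gives $C_n=0$. Without the relation $\lambda_a\cdot C_n(a)=0$ your proposal cannot reach this conclusion, so the difficulty you flag in your last paragraph is a real gap, and it sits exactly where the proof lives.
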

\begin{proof}
  Let us assume that $C_1 = \cdots = C_{n-1} = 0$. We will prove that $C_n = 0$. The following Sweedler like notation for $C_n(x)$ will be very useful:
  \begin{displaymath}
    C_n(x) = \sum x' \otimes x'' \otimes x''',\quad x \in U(\g)
  \end{displaymath}
    With this notation Theorem~\ref{thm:kernel} gives
  \begin{equation}
    \label{eq:squares}
    \sum x'_{(1)} x'' \otimes x'_{(2)} \otimes x''' + \sum x'x'''_{(1)} \otimes x'' \otimes x'''_{(2)} = 0.
  \end{equation}
  Lemma~\ref{lem:induction} with $\varphi_n = \sum_{i+j = n} (\Delta_j\otimes \Id)\Delta_i$ and $\psi_n = \sum_{i+j = n}(\Id \otimes \Delta_j)\Delta_i$ implies that
  \begin{eqnarray*}
    C_n(xy) &=& \sum C_n(x) (y_{(1)} \otimes y_{(2)} \otimes y_{(3)}) + \sum (x_{(1)} \otimes x_{(2)} \otimes x_{(3)}) C_n(y) \\
    &=& \sum x'y_{(1)} \otimes x'' y_{(2)} \otimes x''' y_{(3)} + \sum x_{(1)}y' \otimes x_{(2)}y'' \otimes x_{(3)}y'''.
  \end{eqnarray*}
  We can apply Theorem~\ref{thm:kernel} to the element $C_h(xy)$ to get
  \begin{eqnarray*}
    && \sum (x'_{(1)}y_{(1)})(x''y_{(2)} )\otimes x'_{(2)}y_{(3)} \otimes x'''y_{(4)}\\
    &&\quad\quad +
    \sum (x_{(1)}y'_{(1)})(x_{(2)}y'') \otimes x_{(3)}y'_{(2)} \otimes x_{(4)}y'''\\
    && \quad\quad +
    \sum (x'y_{(1)})(x'''_{(1)}y_{(2)}) \otimes x''y_{(3)} \otimes x'''_{(2)}y_{(4)}\\
    &&\quad\quad +
     \sum (x_{(1)}y')(x_{(2)}y'''_{(1)}) \otimes x_{(3)}y'' \otimes x_{(4)}y'''_{(2)} = 0.
  \end{eqnarray*}
  In fact, since $C_n(x)$ and $C_n(y)$ also satisfy (\ref{eq:squares}) then all the summands where $y_{(1)}$ or $x_{(2)}$ are scalars vanish. Moreover, since $C_n(\g) \subseteq \g \wedge \g \wedge \g$ then substituting $x$ and $y$ for elements $a$ and $b$ of $\m$ we get 
  \begin{eqnarray*}
    && \sum (a'ba'' \otimes 1 \otimes a''' + ba'' \otimes a' \otimes a''' + b'ab'' \otimes 1 \otimes b'''\\
     && \quad\quad  + ab'' \otimes b'\otimes b''' + a'ba''' \otimes a'' \otimes 1 + a'b \otimes a'' \otimes a''' \\
   && \quad\quad + b'ab'''\otimes b'' \otimes 1 + b'a\otimes b'' \otimes b''') =0.
  \end{eqnarray*}
  Projecting onto $\ker \epsilon \otimes \ker \epsilon \otimes \ker \epsilon$ this equality leads to
  \begin{displaymath}
    \sum ba'' \otimes a' \otimes a''' + ab'' \otimes b' \otimes b''' + a'b \otimes a'' \otimes a''' + b'a \otimes b'' \otimes b''' = 0.
  \end{displaymath}
  The skew-symmetry of the tensors in $C_n(\g)$ implies that
  \begin{displaymath}
    \sum [a',b]\otimes a'' \otimes a''' - [a,b']\otimes b'' \otimes b''' = 0.
  \end{displaymath}
  This equality with the substitution $b = a$ is
  \begin{displaymath}
    \sum [a,a']\otimes a'' \otimes a''' = 0.
  \end{displaymath}
  We can also obtain 
  \begin{displaymath}
    \sum a'\otimes[a,a''] \otimes a'''  = 0, \quad \sum a'\otimes a'' \otimes [a,a'''] = 0
  \end{displaymath}
  by the skew-symmetry of $C_n(\g)$.
  Therefore, if $\lambda\colon \g \rightarrow \Endo(\g \wedge \g \wedge \g)$ denotes the representation of $\g$ on the $\g$-module $\g \wedge \g \wedge \g$ then
  \begin{equation}\label{eq:kill}
    \lambda_a \cdot C_n(a) = 0.
  \end{equation}
 Lemma~\ref{lem:induction} implies that $C_n$ is a 1-cocycle of $\g$ with values in $\g \wedge \g \wedge \g$. Since $\g$ is simple this cocycle is a cobundary so there exists $r\in \g \wedge \g \wedge \g$ such that $C_n(a) = \lambda_a \cdot r$. By (\ref{eq:kill}), we finally obtain that $\lambda^2_a \cdot r = 0$ for all $a \in \g$. In particular, the Casimir operator of $\g$ kills the element $r$. However,  the Casimir operator acts as a non-zero scalar multiple of the identity on any non-trivial irreducible module of $\g$. It follows that $r$ spans a trivial submodule and that $C_n(a) = \lambda_a \cdot r = 0$ for all $a\in \g$. Hence $C_n = 0$ by Lemma~\ref{lem:induction}. \qed
  \end{proof}

  \begin{theorem}
    Let $\g$ be a finite-dimensional central simple Lie algebra. If $U_h(\g)$ is a bialgebra deformation that satisfies the left and right Moufang identities then $U_h(\g)$ is associative.
  \end{theorem}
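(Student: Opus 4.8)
The plan is to prove the theorem by running the argument of the preceding theorem upside down. Writing $p_h$ for the deformed product, I would form the associator $A_h := p_h(p_h\tilde{\otimes}\Id) - p_h(\Id\tilde{\otimes}p_h)$ of $U_h(\g)$ and expand $A_h\vert_{U(\g)} = hA_1 + h^2A_2 + \cdots$; the constant term vanishes because $U(\g)$ is associative. The goal is then to show $A_n = 0$ by induction on $n$, assuming $A_1 = \cdots = A_{n-1} = 0$, in exact mirror of the induction on the coassociator $C_n$ that established coassociativity in the preceding theorem.

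The device that makes the mirroring legitimate is the contravariant involution $\dagger$ on the graphical calculus that reflects every diagram about a horizontal axis, interchanging the multiplication node with the comultiplication node and the unit with the counit. The compatibility axioms of a bialgebra are invariant under $\dagger$; moreover $\dagger$ carries the left and right co-Moufang identities to the left and right Moufang identities, and it carries the equation expressing coassociativity to the one expressing associativity. Consequently every purely diagrammatic identity proved for co-Moufang deformations has a valid reflected counterpart for $U_h(\g)$: the reflections of Lemma~\ref{lem:basic}, Lemma~\ref{lem:P1} and Lemma~\ref{lem:P2} hold with $p_h$ and $\Delta_h$ interchanged, and the reflection of Theorem~\ref{thm:kernel} furnishes the dual constraint on $A_h$ (an image condition replacing the kernel condition on $C_h$).

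The only genuinely non-formal ingredient is the representation-theoretic endgame, where finite dimensionality of $\g$ does the work. The operator $Q$ is self-dual under $\dagger$, so its eigenspace decomposition is unchanged; in particular $\g$ remains the eigenspace of eigenvalue $2$. The reflections of Lemma~\ref{lem:P1}, Lemma~\ref{lem:skew} and Lemma~\ref{lem:P2} then show that, with inputs in $\g$ and output corestricted to the primitive part $\g \subseteq U(\g)$, the component $A_n$ is alternating in its three arguments, giving an alternating map $\g\wedge\g\wedge\g \to \g$. The reflection of Lemma~\ref{lem:induction} reinterprets this datum as a $1$-cocycle $a \mapsto A_n(a,\,\cdot\,,\,\cdot\,)$ of $\g$ with values in a finite-dimensional $\g$-module $V$ assembled from $\g$ and $\g\wedge\g$ through the $\mathrm{Hom}$-duality. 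Since $\g$ is simple, $H^1(\g,V) = 0$, so this cocycle is a coboundary, $A_n(a,\,\cdot\,,\,\cdot\,) = \lambda_a\cdot r$ for a fixed $r \in V$. Feeding this into the reflected Theorem~\ref{thm:kernel} yields $\lambda_a^2\cdot r = 0$ for all $a$, so the Casimir operator annihilates $r$; as it acts as a nonzero scalar on every nontrivial irreducible, $r$ spans a trivial submodule and $A_n$ vanishes on $\g\wedge\g\wedge\g$. The reflected Lemma~\ref{lem:induction} then propagates $A_n = 0$ from primitive inputs to all of $U(\g)$, closing the induction.

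The main obstacle is precisely that this duality cannot be obtained for free by applying the preceding theorem to a ``dualized deformation'': the linear dual of $U(\g)$ is a commutative Hopf algebra of function-algebra type $\mathcal{O}(G)$, not the enveloping algebra of a central simple Lie algebra, so the duality must be implemented step by step inside the proof rather than on the objects. The delicate work lies in the reflected representation-theoretic step — pinning down the correct finite-dimensional coefficient module $V$, checking the cocycle and coboundary conditions transported through the $\mathrm{Hom}$-duality, and re-running the Casimir argument — which is exactly where the hypotheses that $\g$ is finite-dimensional and central simple are consumed.
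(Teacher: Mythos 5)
Your plan rests on the claim that every diagrammatic identity established for co-Moufang deformations has a valid mirror image for Moufang deformations, so that Lemmas~\ref{lem:basic}, \ref{lem:P1}, \ref{lem:skew} and \ref{lem:P2} and Theorem~\ref{thm:kernel} can simply be reflected and reused. This is where the argument breaks. The proofs of Lemmas~\ref{lem:P1} and \ref{lem:P2} (and hence of Lemma~\ref{lem:skew} and of the lemma giving $C_n(a)\in\m\wedge\m\wedge\m$, which feed the representation-theoretic endgame) repeatedly invoke the cocommutativity $\Delta_0=\Delta_0^{\op}$ of the undeformed coproduct of $U(\g)$. The reflection of that ingredient is the commutativity $p_0=p_0^{\op}$ of the undeformed product of $U(\g)$, which is false for every non-abelian $\g$. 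So these lemmas are not ``purely diagrammatic'' consequences of the self-dual bialgebra axioms plus the (co-)Moufang identities: their proofs consume non-self-dual structure of $U(\g)$, and their reflections would have to be proved from scratch — you give no such proofs, and it is not clear they are even true. The same asymmetry affects the endgame you sketch: $C_n(\g)\subseteq\m\wedge\m\wedge\m$ is a restriction of the domain of $C_n$ to primitives, whereas its mirror is a corestriction of the codomain of the trilinear map $A_n$, and the eigenvalue computation with $Q\otimes Q\otimes\Id-Q\otimes\Id\otimes\Id-\Id\otimes Q\otimes\Id$ that delivers the former does not transport by reflection once the reflected lemmas are unavailable. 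You correctly observe that one cannot dualize at the level of objects, but you locate the remaining difficulty in the representation-theoretic step, when in fact the obstruction already kills the diagrammatic half of the argument.

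The paper's actual proof is entirely different and avoids dualization. It applies the left and right Hopf--Moufang identities with a primitive element $a\in\g$ in the distinguished slot, using $\Delta_h\vert_{U(\g)}=\Delta_0+(\text{higher order})$ and the induction hypothesis $A_1=\cdots=A_{n-1}=0$, to conclude that $(a,y,z)_\bullet\equiv-(y,a,z)_\bullet$ and its right-hand analogue modulo $h^{n+1}$, i.e.\ that $\bar a\in\Nalt(\overline{U_h})$ where $\overline{U_h}=U_h(\g)/h^{n+1}U_h(\g)$. It then forms the Malcev algebra $\tilde\m$ generated by $\g+h^{n+1}U_h$ under the commutator, invokes the Levi-type decomposition for Malcev algebras to split a Lie subalgebra $\tilde{\mathfrak s}$ off the nilpotent ideal $\tilde\m\cap h\overline{U_h}$, shows that $\{1\}\cup\tilde{\mathfrak s}\cup\{h\}$ generates $\overline{U_h}$, and concludes associativity because an algebra generated by a Lie subalgebra of its generalized alternative nucleus is associative. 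No cocycle, coboundary or Casimir computation occurs in the associativity proof; those tools are reserved for the coassociativity theorem, precisely because that is the side on which $U(\g)$ has the requisite cocommutativity. To salvage your route you would need independent proofs of the reflected versions of Lemmas~\ref{lem:P1} and \ref{lem:P2}, and a genuinely new argument replacing every use of $\Delta_0=\Delta_0^{\op}$.
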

  \begin{proof}
    We will write the associator in $U_h(\g)$ of elements $x,y,z \in U(\g)$ as
    \begin{displaymath}
      (x,y,z)_{\bullet} = (x \bullet y) \bullet z - x \bullet (y \bullet z) = A_1(x,y,z) h + A_2(x,y,z) h^2 + \cdots
    \end{displaymath}
    and we will prove by induction that $A_n = 0$. Assume that $A_1 = \cdots = A_{n-1} = 0$, then
    \begin{displaymath}
    \gbeg{4}{7}
    \etiqueta{0}\gcmu       \gcl{1}     \gcl{1}     \gnl
    \gcl{1}     \gbr        \gcl{1}     \gnl
    \gcl{1}     \gcl{1}     \gmu        \gnl
    \gcl{1}     \gcl{1}     \gcn{1}{1}{2}{1}    \gnl
    \gcl{1}     \gmu                    \gnl
    \gcl{1}     \gcn{1}{1}{2}{1}        \gnl
    \gmu                                \gnl
    \gend
    +
    \gbeg{4}{7}
    \etiqueta{+}\gcmu       \gcl{1}     \gcl{1}     \gnl
    \gcl{1}     \gbr        \gcl{1}     \gnl
    \gcl{1}     \gcl{1}     \gmu        \gnl
    \gcl{1}     \gcl{1}     \gcn{1}{1}{2}{1}    \gnl
    \gcl{1}     \gmu                    \gnl
    \gcl{1}     \gcn{1}{1}{2}{1}        \gnl
    \gmu                                \gnl
    \gend
    =
    \gbeg{4}{7}
    \etiqueta{0}\gcmu       \linea      \linea      \gnl
    \linea      \cruce      \linea      \gnl
    \gmu        \linea      \linea      \gnl
    \gcn{2}{1}{2}{3}        \linea      \linea  \gnl
    \gvac{1}    \gmu        \linea      \gnl
    \gvac{1}    \gcn{2}{1}{2}{3}        \linea  \gnl
    \gvac{2}    \gmu         \gnl
    \gend
    +
     \gbeg{4}{7}
    \etiqueta{+}\gcmu       \linea      \linea      \gnl
    \linea      \cruce      \linea      \gnl
    \gmu        \linea      \linea      \gnl
    \gcn{2}{1}{2}{3}        \linea      \linea  \gnl
    \gvac{1}    \gmu        \linea      \gnl
    \gvac{1}    \gcn{2}{1}{2}{3}        \linea  \gnl
    \gvac{2}    \gmu         \gnl
    \gend
    \end{displaymath}
   implies that
    \begin{displaymath}
    \gbeg{4}{7}
    \etiqueta{0}\gcmu       \gcl{1}     \gcl{1}     \gnl
    \gcl{1}     \gbr        \gcl{1}     \gnl
    \gcl{1}     \gcl{1}     \gmu        \gnl
    \gcl{1}     \gcl{1}     \gcn{1}{1}{2}{1}    \gnl
    \gcl{1}     \gmu                    \gnl
    \gcl{1}     \gcn{1}{1}{2}{1}        \gnl
    \gmu                                \gnl
    \gend
    \equiv
    \gbeg{4}{7}
    \etiqueta{0}\gcmu       \linea      \linea      \gnl
    \linea      \cruce      \linea      \gnl
    \gmu        \linea      \linea      \gnl
    \gcn{2}{1}{2}{3}        \linea      \linea  \gnl
    \gvac{1}    \gmu        \linea      \gnl
    \gvac{1}    \gcn{2}{1}{2}{3}        \linea  \gnl
    \gvac{2}    \gmu         \gnl
    \gend \quad (\modulus{h^{n+1}}).
    \end{displaymath}
    Hence, modulo $h^{n+1}$,
    the element $ a \in \g$ verifies that
    \begin{displaymath}
      a\bullet (y \bullet z) +y \bullet (a  \bullet z) \equiv (z  \bullet y)  \bullet z + (y  \bullet a)  \bullet z
    \end{displaymath}
    i.e.,
    \begin{displaymath}
      (a,y,z)_{\bullet} \equiv - (y,a,z)_{\bullet}.
    \end{displaymath}
    Having also considered the right Moufang identity, we would have obtained that
    $\bar{a} \in \Nalt(\overline{U_h})$, where $\bar{x}$ and $\overline{U_h}$ stand for $x + h^{n+1}U_h(\g)$  and $U_h(\g)/ h^{n+1} U_h(\g)$ respectively. Thus, the jacobian of $\bar{a},\bar{b},\bar{c} \in \g + h^{n+1}U_h$ is 
    \begin{displaymath}
   \Jac(\bar{a},\bar{b},\bar{c}) = 6(\bar{a},\bar{b},\bar{c})_{\bullet} = 6 \overline{A_n(a,b,c)}h^{n}
    \end{displaymath}

    Let $\tilde{\m}$ be the Malcev subalgebra generated by  $\g + h^{n+1}U_h$ inside $\overline{U_h}$ but where the product is understood to be the commutator product $\overline{x \bullet y - y \bullet x}$. Since
    \begin{displaymath}
      \tilde{\m}/( \tilde{\m} \cap h \overline{U_h}) \cong ( \tilde{\m} + h \overline{U_h})/h\overline{U_h} \subseteq \overline{U_h}/h\overline{U_h} \cong U(\g)
    \end{displaymath}
     then $\tilde{\m}/(\tilde{\m} \cap h \overline{U_h})$ is a Lie algebra isomorphic to $\g$. Since $\tilde{\m} \cap h \overline{U_h}$ is a nilpotent ideal of $\tilde{\m}$, the radical in fact, we can find a Lie subalgebra $\tilde{\mathfrak{s}}$ of $\tilde{\m}$ such that $\tilde{\m} = \tilde{\mathfrak{s}} \oplus (\tilde{\m} \cap h \overline{U_h})$, a direct sum of vector spaces \cite{G77,K77}. Let $S$ be the subalgebra of $\overline{U_h}$ generated by $\{1\}\cup \tilde{\mathfrak{s}} \cup \{h\}$. Clearly $S/(S\cap h\overline{U_h}) \cong (S + h \overline{U_h})/h\overline{U_h} \cong U(\g)$ which implies that $h^n S = h^n \overline{U_h}$ since $h^{n+1}\overline{U_h} = 0 = h^{n+1}S$. We recursively get $h^iS = h^i\overline{U_h}$ for $i=n,n-1,\dots, 1,0$ so $S = \overline{U_h}$. Since $\tilde{\mathfrak{s}} \subseteq \Nalt(\overline{U_h})$ and $\tilde{\mathfrak{s}}$ is a Lie algebra then $S$ is associative. Thus $A_n(x,y,z) = 0$. \qed
  \end{proof}

\appendix
\section{Proof of Theorem\ref{thm:cocommutative}}
We include a non-graphical proof of Theorem~\ref{thm:cocommutative} so that the reader can check the correspondence between the movements performed in the diagrams and the algebraic manipulations of the equalities.

Recall that $\Delta_h = \sum_n h^n \Delta_n$ and $p_h = \sum_n h^n p_n$ stand for the comultiplication and multiplication of $U_h(\m)$ respectively. The map $x \tilde{\otimes} y \mapsto y \tilde{\otimes} x$ will be denoted by $\tau$. We rewrite part of the statement of Lemma~\ref{lem:basic} for convenience:
\begin{displaymath}
   \gbeg{3}{4}
            \gcmu                               \gnl
            \linea      \gcn{1}{1}{1}{2}        \gnl
            \linea      \gcmu                   \gnl
            \gmu        \linea                  \gnl
            \gend
            =
            \gbeg{3}{4}
            \gvac{1}            \gcmu       \gnl
            \gcn{2}{1}{3}{2}    \linea      \gnl
            \gcmu               \linea      \gnl
            \gmu                \linea      \gnl
			\gend \mbox{corresponds to }
            (p_h \tilde{\otimes} \Id)(\Id \tilde{\otimes} \Delta_h)\Delta_h=((p_h\Delta_h) \tilde{\otimes} \Id) \Delta_h
\end{displaymath}
and
\begin{displaymath}
    \gbeg{3}{5}
            \gcmu       \gnl
            \linea      \gcn{1}{1}{1}{2}        \gnl
            \linea      \gcmu                   \gnl
            \linea      \cruce                  \gnl
            \gmu        \linea                  \gnl
            \gend
            =
            \gbeg{3}{5}
            \gvac{1}            \gcmu       \gnl
            \gcn{2}{1}{3}{2}    \linea      \gnl
            \gcmu               \linea      \gnl
            \linea              \cruce      \gnl
            \gmu                \linea      \gnl
            \gend
    \mbox{corresponds to }
    (p_h \tilde{\otimes} \Id)(\Id \tilde{\otimes} \Delta^{\op}_h)\Delta_h = (p_h \tilde{\otimes} \Id)(\Id \tilde{\otimes} \tau)(\Delta_h \tilde{\otimes} \Id)\Delta_h.
\end{displaymath}
Part 2) of Lemma~\ref{lem:basic} establishes that $(\Id \tilde{\otimes} p_h)(\Delta_h \tilde{\otimes} \Id)\Delta_h = (\Id \tilde{\otimes} (p_h\Delta_h))\Delta_h$. After composing with $\tau$ we get $\tau (\Id \tilde{\otimes} (p_h\Delta_h)) \Delta_h= \tau (\Id \tilde{\otimes} p_h)(\Delta_h \tilde{\otimes} \Id)\Delta_h$. The left-hand side of this equality is $((p_h\Delta_h) \tilde{\otimes} \Id)\Delta^{\op}_h$. The right-hand side of the equality is $\tau(\Id \tilde{\otimes} p_h)(\tau \tilde{\otimes} \Id)(\Delta^{\op}_h \tilde{\otimes} \Id) \Delta_h = (p_h \tilde{\otimes} \Id)(\Id \tilde{\otimes} \tau)(\Delta^{\op}_h \tilde{\otimes} \Id)\Delta_h$. Therefore,
\begin{displaymath}
  ((p_h\Delta_h) \tilde{\otimes} \Id)\Delta^{\op}_h = (p_h \tilde{\otimes} \Id)(\Id \tilde{\otimes} \tau)(\Delta^{\op}_h \tilde{\otimes} \Id)\Delta_h
\end{displaymath}

The proof of Theorem \ref{thm:cocommutative} goes as follows. By Lemma \ref{lem:basic} we have
\begin{eqnarray*}
  D&:=& (p_h\Delta_h \tilde{\otimes} \Id) \Delta_h - (p_h\Delta_h \tilde{\otimes} \Id)\Delta^{\op}_h \\
  &=&
  (p_h \tilde{\otimes} \Id)(\Id \tilde{\otimes} \Delta_h)\Delta_h - (p_h\Delta_h \tilde{\otimes} \Id)\Delta^{\op}_h \\
  &=&
  (p_h \tilde{\otimes} \Id)(\Id \tilde{\otimes} \Delta_h)\Delta_h - (p_h \tilde{\otimes} \Id)(\Id \tilde{\otimes} \tau)(\Delta^{\op}_h \tilde{\otimes} \Id)\Delta_h
\end{eqnarray*}
Since $\Delta_j = \Delta^{\op}_j$ $j = 0,\dots,n-1$ then, modulo $h^{n+1}$,
\begin{eqnarray*}
  D&\equiv&
  (p_h \tilde{\otimes} \Id)(\Id \tilde{\otimes} \Delta_h) \Delta_h - (p_h \tilde{\otimes} \Id)(\Id \tilde{\otimes} \tau)(\Delta^{\op} \tilde{\otimes} \Id)\Delta_0\\
  && \quad - \sum_{\substack{i+j+k \leq n\\ 1\leq i}} h^{i+j+k} (p_k \otimes \Id)(\Id \otimes \tau)(\Delta_j \otimes \Id)\Delta_i\\
  &\equiv&
  (p_h \tilde{\otimes} \Id)(\Id \tilde{\otimes} \Delta_h) \Delta_h - (p_h \tilde{\otimes} \Id)(\Id \tilde{\otimes} \tau)(\Delta^{\op}_h \tilde{\otimes} \Id)\Delta_0\\
  && \quad +(p_h \tilde{\otimes} \Id)(\Id \tilde{\otimes} \tau)(\Delta_h \tilde{\otimes} \Id)\Delta_0- (p_h \tilde{\otimes} \Id)(\Id \tilde{\otimes} \tau)(\Delta_h \tilde{\otimes} \Id)\Delta_h
\end{eqnarray*}
By Lemma~\ref{lem:basic}
\begin{eqnarray*}
  D &=& (p_h \tilde{\otimes} \Id)(\Id \tilde{\otimes} \Delta_h) \Delta_h - (p_h \tilde{\otimes} \Id)(\Id \tilde{\otimes} \tau)(\Delta^{\op}_h \tilde{\otimes} \Id)\Delta_0\\
  && \quad +(p_h \tilde{\otimes} \Id)(\Id \tilde{\otimes} \tau)(\Delta_h \tilde{\otimes} \Id)\Delta_0- (p_h \tilde{\otimes} \Id)(\Id \tilde{\otimes} \Delta^{\op}_h)\Delta_h\\
  &\equiv& (p_h \tilde{\otimes} \Id)(\Id \tilde{\otimes} \Delta_h) \Delta_h - (p_h \tilde{\otimes} \Id)(\Id \tilde{\otimes} \tau)(\Delta^{\op}_h \tilde{\otimes} \Id)\Delta_0\\
  && \quad +(p_h \tilde{\otimes} \Id)(\Id \tilde{\otimes} \tau)(\Delta_h \tilde{\otimes} \Id)\Delta_0- (p_h \tilde{\otimes} \Id)(\Id \tilde{\otimes} \Delta^{\op}_h)\Delta_0 \\
  && \quad - \sum_{\substack{i+j+k \leq n \\ 1\leq i}} h^{i+j+k}(p_k \otimes \Id)(\Id \otimes \Delta^{\op}_j)\Delta_i.
\end{eqnarray*}
Since $\Delta_j = \Delta^{\op}_j$ $j=0,\dots,n-1$ then, modulo $h^{n+1}$,
\begin{eqnarray*}
  D &\equiv&
  (p_h \tilde{\otimes} \Id)(\Id \tilde{\otimes} \Delta_h) \Delta_h - (p_h \tilde{\otimes} \Id)(\Id \tilde{\otimes} \tau)(\Delta^{\op}_h \tilde{\otimes} \Id)\Delta_0\\
  && \quad +(p_h \tilde{\otimes} \Id)(\Id \tilde{\otimes} \tau)(\Delta_h \tilde{\otimes} \Id)\Delta_0- (p_h \tilde{\otimes} \Id)(\Id \tilde{\otimes} \Delta^{\op}_h)\Delta_0 \\
  && \quad - \sum_{\substack{i+j+k \leq n \\ 1\leq i}} h^{i+j+k}(p_k \otimes \Id)(\Id \otimes \Delta_j)\Delta_i\\
  &\equiv&
  (p_h \tilde{\otimes} \Id)(\Id \tilde{\otimes} \Delta_h) \Delta_0 - (p_h \tilde{\otimes} \Id)(\Id \tilde{\otimes} \tau)(\Delta^{\op}_h \tilde{\otimes} \Id)\Delta_0\\
  && \quad +(p_h \tilde{\otimes} \Id)(\Id \tilde{\otimes} \tau)(\Delta_h \tilde{\otimes} \Id)\Delta_0- (p_h \tilde{\otimes} \Id)(\Id \tilde{\otimes} \Delta^{\op}_h)\Delta_0.
\end{eqnarray*}
Any $a \in \m$ is primitive, i.e., $\Delta_0(a) = a \otimes 1 + 1 \otimes a$ thus
\begin{eqnarray*}
    &&(p_h \tilde{\otimes} \Id)(\Id \tilde{\otimes} \Delta_h) \Delta_0(a) = \\
    &&\quad\quad (p_h \tilde{\otimes} \Id)(\Id \tilde{\otimes} \Delta_h)(a \otimes 1 + 1 \otimes a) = \Delta_h(a) + a \otimes 1\\
    && (p_h \tilde{\otimes} \Id)(\Id \tilde{\otimes} \tau)(\Delta^{\op}_h \tilde{\otimes} \Id)\Delta_0(a) = \\
    && \quad\quad (p_h \tilde{\otimes} \Id)(\Id \tilde{\otimes} \tau)(\Delta^{\op}_h \tilde{\otimes} \Id)(a \otimes 1 + 1 \otimes a) = \Delta^{\op}_h(a) + a \otimes 1\\
    && (p_h \tilde{\otimes} \Id)(\Id \tilde{\otimes} \tau)(\Delta_h \tilde{\otimes} \Id)\Delta_0(a) =\\
    &&\quad\quad (p_h \tilde{\otimes} \Id)(\Id \tilde{\otimes} \tau)(\Delta_h \tilde{\otimes} \Id)(a \otimes 1 + 1 \otimes a) = \Delta_h(a) + a \otimes 1\\
    && (p_h \tilde{\otimes} \Id)(\Id \tilde{\otimes} \Delta^{\op}_h)\Delta_0(a) =\\
    && \quad\quad (p_h \tilde{\otimes} \Id)(\Id \tilde{\otimes} \Delta^{\op}_h)(a \otimes 1 + 1 \otimes a) =
    \Delta^{\op}_h(a) + a \otimes 1
\end{eqnarray*}
and
\begin{displaymath}
  (p_h\Delta_h \tilde{\otimes} \Id) \Delta_h(a) - (p_h\Delta_h \tilde{\otimes} \Id)\Delta^{\op}_h(a) \equiv 2(\Delta_h(a) - \Delta^{\op}_h(a)) \quad (\modulus{h^{n+1}})
\end{displaymath}
Since $\Delta_i = \Delta^{\op}_i$ $i =0,\dots, n-1$ we finally get
\begin{displaymath}
   (p_0\Delta_0 \tilde{\otimes} \Id) \Delta_n(a) - (p_0\Delta_0 \tilde{\otimes} \Id)\Delta^{\op}_n(a) = 2(\Delta_n(a) - \Delta^{\op}_n(a)).
\end{displaymath}
This shows that $(\Delta_n - \Delta^{\op}_n)(a)$ is an eigenvector of $Q$ of eigenvalue $2$ so \begin{displaymath}
  (\Delta_n - \Delta^{\op}_n)(a) \in \m \wedge \m
\end{displaymath}
for any $a \in \m$. The result follows from \cite{MP}.

\def\cprime{$'$}
\begin{bibdiv}
\begin{biblist}

\bib{BMP}{article}{
   author={Benkart, G.},
   author={Madariaga, S.},
   author={P\'erez-Izquierdo, J. M.},
   title={Hopf algebras with triality},
   journal={to appear in Trans. Amer. Math. Soc.},
   volume={},
   date={},
   pages={},
   review={},
}

\bib{BHP09}{article}{
   author={Bremner, Murray R.},
   author={Hentzel, Irvin R.},
   author={Peresi, Luiz A.},
   author={Usefi, Hamid},
   title={Universal enveloping algebras of the four-dimensional Malcev
   algebra},
   conference={
      title={Algebras, representations and applications},
   },
   book={
      series={Contemp. Math.},
      volume={483},
      publisher={Amer. Math. Soc.},
      place={Providence, RI},
   },
   date={2009},
   pages={73--89},
   review={\MR{2497952 (2010g:17039)}},
}

\bib{BHPTU}{article}{
   author={Bremner, Murray R.},
   author={Hentzel, Irvin R.},
   author={Peresi, Luiz A.},
   author={Tvalavadze, Marina V.},
   author={Usefi, Hamid},
   title={Enveloping algebras of Malcev algebras},
   journal={Comment. Math. Univ. Carolin.},
   volume={51},
   date={2010},
   number={2},
   pages={157--174},
   issn={0010-2628},
   review={\MR{2682471 (2011j:17056)}},
}

\bib{BT11}{article}{
   author={Tvalavadze, Marina V.},
   author={Bremner, Murray R.},
   title={Enveloping algebras of solvable Malcev algebras of dimension five},
   journal={Comm. Algebra},
   volume={39},
   date={2011},
   number={8},
   pages={2816--2837},
   issn={0092-7872},
   review={\MR{2834132}},
}

\bib{BU10}{article}{
   author={Bremner, Murray R.},
   author={Usefi, Hamid},
   title={Enveloping algebras of the nilpotent Malcev algebra of dimension
   five},
   journal={Algebr. Represent. Theory},
   volume={13},
   date={2010},
   number={4},
   pages={407--425},
   issn={1386-923X},
   review={\MR{2660854 (2011k:17055)}},
}

\bib{Ca78}{article}{
   author={Carlsson, Renate},
   title={On the exceptional central simple non-Lie Mal\cprime cev algebras},
   journal={Trans. Amer. Math. Soc.},
   volume={244},
   date={1978},
   pages={173--184},
   issn={0002-9947},
   review={\MR{506614 (80a:17001)}},
}

\bib{CP95}{book}{
   author={Chari, Vyjayanthi},
   author={Pressley, Andrew},
   title={A guide to quantum groups},
   note={Corrected reprint of the 1994 original},
   publisher={Cambridge University Press},
   place={Cambridge},
   date={1995},
   pages={xvi+651},
   isbn={0-521-55884-0},
   review={\MR{1358358 (96h:17014)}},
}

\bib{Do78}{article}{
   author={Doro, Stephen},
   title={Simple Moufang loops},
   journal={Math. Proc. Cambridge Philos. Soc.},
   volume={83},
   date={1978},
   number={3},
   pages={377--392},
   issn={0305-0041},
   review={\MR{0492031 (58 \#11195)}},
}

\bib{D87}{article}{
   author={Drinfel{\cprime}d, V. G.},
   title={Quantum groups},
   conference={
      title={},
      address={Berkeley, Calif.},
      date={1986},
   },
   book={
      publisher={Amer. Math. Soc.},
      place={Providence, RI},
   },
   date={1987},
   pages={798--820},
   review={\MR{934283 (89f:17017)}},
}

\bib{Gl68}{article}{
   author={Glauberman, George},
   title={On loops of odd order. II},
   journal={J. Algebra},
   volume={8},
   date={1968},
   pages={393--414},
   issn={0021-8693},
   review={\MR{0222198 (36 \#5250)}},
}

\bib{G77}{article}{
   author={Gri{\v{s}}kov, A. N.},
   title={An analogue of Levi's theorem for Mal\cprime cev algebras},
   language={Russian},
   journal={Algebra i Logika},
   volume={16},
   date={1977},
   number={4},
   pages={389--396, 493},
   issn={0373-9252},
   review={\MR{0573915 (58 \#28114)}},
}

\bib{Gr03}{article}{
   author={Grishkov, Alexander},
   title={Lie algebras with triality},
   journal={J. Algebra},
   volume={266},
   date={2003},
   number={2},
   pages={698--722},
   issn={0021-8693},
   review={\MR{1995132 (2004h:17019)}},
}

\bib{GrZa06}{article}{
   author={Grishkov, Alexander N.},
   author={Zavarnitsine, Andrei V.},
   title={Groups with triality},
   journal={J. Algebra Appl.},
   volume={5},
   date={2006},
   number={4},
   pages={441--463},
   issn={0219-4988},
   review={\MR{2239539 (2007g:20062)}},
}

\bib{J85}{article}{
   author={Jimbo, Michio},
   title={A $q$-difference analogue of $U({\germ g})$ and the Yang-Baxter
   equation},
   journal={Lett. Math. Phys.},
   volume={10},
   date={1985},
   number={1},
   pages={63--69},
   issn={0377-9017},
   review={\MR{797001 (86k:17008)}},
}

\bib{KM09}{article}{
 author={Klim, J.},
 author={Majid, S.},
 title={Hopf quasigroups and the algebraic 7-sphere},
 status={preprint},
 eprint={arXiv:0906.5026v3 [math.QA]},
}

\bib{K77}{article}{
   author={Kuz{\cprime}min, E. N.},
   title={Levi's theorem for Mal\cprime cev algebras},
   language={Russian},
   journal={Algebra i Logika},
   volume={16},
   date={1977},
   number={4},
   pages={424--431, 493},
   issn={0373-9252},
   review={\MR{0573914 (58 \#28113)}},
}
\bib{MP}{article}{
   author={Madariaga, S.},
   author={P\'erez-Izquierdo, J. M.},
   title={Non-existence of coassociative quantized universal enveloping algebras of the traceless octonions},
   journal={to appear in Comm. Algebra},
   volume={},
   date={},
   pages={},
   review={},
}

\bib{M55}{article}{
   author={Mal{\cprime}cev, A. I.},
   title={Analytic loops},
   language={Russian},
   journal={Mat. Sb. N.S.},
   volume={36(78)},
   date={1955},
   pages={569--576},
   review={\MR{0069190 (16,997g)}},
}

\bib{Mi92}{article}{
   author={Mikheev, P. O.},
   title={On the embedding of Mal\cprime tsev algebras into Lie algebras},
   language={Russian, with Russian summary},
   journal={Algebra i Logika},
   volume={31},
   date={1992},
   number={2},
   pages={167--173, 221},
   issn={0373-9252},
   translation={
      journal={Algebra and Logic},
      volume={31},
      date={1992},
      number={2},
      pages={106--110 (1993)},
      issn={0002-5232},
   },
   review={\MR{1289030}},
}

\bib{Mi93}{article}{
   author={Mikheev, P. O.},
   title={Groups that envelop Moufang loops},
   language={Russian, with Russian summary},
   journal={Uspekhi Mat. Nauk},
   volume={48},
   date={1993},
   number={2(290)},
   pages={191--192},
   issn={0042-1316},
   translation={
      journal={Russian Math. Surveys},
      volume={48},
      date={1993},
      number={2},
      pages={195--196},
      issn={0036-0279},
   },
   review={\MR{1239875 (94g:20098)}},
}

\bib{MP10}{article}{
   author={Mostovoy, J.},
   author={P{\'e}rez-Izquierdo, J. M.},
   title={Formal multiplications, bialgebras of distributions and
   nonassociative Lie theory},
   journal={Transform. Groups},
   volume={15},
   date={2010},
   number={3},
   pages={625--653},
   issn={1083-4362},
   review={\MR{2718940 (2011i:20104)}},
}

\bib{MPS14}{article}{
   author={Mostovoy, J.},
   author={Perez-Izquierdo, J. M.},
   author={Shestakov, I. P.},
   title={Hopf algebras in non-associative Lie theory},
   journal={Bull. Math. Sci.},
   volume={4},
   date={2014},
   number={1},
   pages={129--173},
   issn={1664-3607},
   review={\MR{3174282}},
}

\bib{P07}{article}{
   author={P{\'e}rez-Izquierdo, Jos{\'e} M.},
   title={Algebras, hyperalgebras, nonassociative bialgebras and loops},
   journal={Adv. Math.},
   volume={208},
   date={2007},
   number={2},
   pages={834--876},
}

\bib{PS04}{article}{
   author={P{\'e}rez-Izquierdo, Jos{\'e} M.},
   author={Shestakov, Ivan P.},
   title={An envelope for Malcev algebras},
   journal={J. Algebra},
   volume={272},
   date={2004},
   number={1},
   pages={379--393},
   issn={0021-8693},
   review={\MR{2029038 (2004j:17040)}},
}

\bib{PS10}{article}{
   author={P{\'e}rez-Izquierdo, Jos{\'e} M.},
   author={Shestakov, Ivan P.},
   title={On the center of the universal enveloping algebra of the central
   simple non-Lie Malcev algebras in characteristic $p$},
   conference={
      title={Proceedings of Jordan Structures in Algebra and Analysis
      Meeting},
   },
   book={
      publisher={Editorial C\'irculo Rojo, Almer\'\i a},
   },
   date={2010},
   pages={227--242},
   review={\MR{2648360 (2011j:17058)}},
}

\bib{R92}{article}{
   author={Reshetikhin, N.},
   title={Quantization of Lie bialgebras},
   journal={Internat. Math. Res. Notices},
   date={1992},
   number={7},
   pages={143--151},
   issn={1073-7928},
   review={\MR{1174619 (93h:17041)}},
}

\bib{MS87}{article}{
   author={Sabinin, L. V.},
   author={Mikheev, P. O.},
   title={Infinitesimal theory of local analytic loops},
   language={Russian},
   journal={Dokl. Akad. Nauk SSSR},
   volume={297},
   date={1987},
   number={4},
   pages={801--804},
   issn={0002-3264},
   translation={
      journal={Soviet Math. Dokl.},
      volume={36},
      date={1988},
      number={3},
      pages={545--548},
      issn={0197-6788},
   },
   review={\MR{924255 (89g:22003)}},
}

\bib{S61}{article}{
   author={Sagle, Arthur A.},
   title={Malcev algebras},
   journal={Trans. Amer. Math. Soc.},
   volume={101},
   date={1961},
   pages={426--458},
   issn={0002-9947},
   review={\MR{0143791 (26 \#1343)}},
}

\bib{SZ}{article}{
   author={Zhelyabin, V. N.},
   author={Shestakov, I. P.},
   title={Chevalley and Kostant theorems for Mal\cprime tsev algebras},
   language={Russian, with Russian summary},
   journal={Algebra Logika},
   volume={46},
   date={2007},
   number={5},
   pages={560--584, 664},
   issn={0373-9252},
   translation={
      journal={Algebra Logic},
      volume={46},
      date={2007},
      number={5},
      pages={303--317},
      issn={0002-5232},
   },
   review={\MR{2378631 (2009e:17065)}},
}

\bib{ZSSS82}{book}{
   author={Zhevlakov, K. A.},
   author={Slin{\cprime}ko, A. M.},
   author={Shestakov, I. P.},
   author={Shirshov, A. I.},
   title={Rings that are nearly associative},
   series={Pure and Applied Mathematics},
   volume={104},
   note={Translated from the Russian by Harry F. Smith},
   publisher={Academic Press Inc. [Harcourt Brace Jovanovich Publishers]},
   place={New York},
   date={1982},
   pages={xi+371},
   isbn={0-12-779850-1},
   review={\MR{668355 (83i:17001)}},
}
\end{biblist}
\end{bibdiv}
\end{document}